\begin{document}

\title[Weyl-Kac formula in Deligne's category]%
{Weyl-Kac character formula for affine Lie algebra in Deligne's category}
\author{Aleksei Pakharev}

\maketitle

\begin{abstract}
  We study the characters of simple modules in the parabolic BGG category of the
  affine Lie algebra in Deligne's category.
  More specifically, we take the limit of Weyl-Kac formula to
  compute the character of the irreducible quotient $L(X,k)$
  of the parabolic Verma module $M(X,k)$ of level $k$, where $X$ is an indecomposable object of
  Deligne's category $\drep(GL_t)$, $\drep(O_t)$, or $\drep(Sp_t)$, under conditions
  that the highest weight of $X$ plus the level gives a fundamental weight,
  $t$ is transcendental, and the base field $\kk$ has characteristic $0$.
  We compare our result to the partial result in \cite[Problem 6.2]{EtingofII},
  and evaluate the characters to the categorical dimensions
  to get a categorical interpretation of the Nekrasov-Okounkov hook length formula, \cite[Formula (6.12)]{NO06}.
\end{abstract}

\tableofcontents

\addtocontents{toc}{\protect\setcounter{tocdepth}{1}}

\section{Introduction}

Let $\kk$ be a field of characteristic $0$.
Deligne's categories $\drep(GL_t)$, $\drep(O_t)$, and $\drep(Sp_t)$ are certain
interpolations of the corresponding representation categories of classical
linear groups, where $t$ is an element of the base field $\kk$.
For further details on $\drep(GL_t)$, see
\cite[Section 10]{DeligneSecond},
\cite[Section 2.5]{EtingofII}, 
\cite[Section 3]{CW2011}.
For the details on $\drep(O_t)$, see
\cite[Section 9]{DeligneSecond},
\cite[Section 2.6]{EtingofII}, 
\cite[Section 2]{CH2015}.
Also see
\cite[Section 9.5]{DeligneSecond},
\cite[Section 2.6]{EtingofII} for an explanation of how $\drep(Sp_t)$ is (almost)
equivalent to $\drep(O_{-t})$.

In Deligne’s category $\drep(G_t)$, one can do the usual representation theory.
For example, in \cite[Section 6]{EtingofII} Etingof defines the affine Lie
algebra $\aff{\lie{g}}_t$ in $\drep(G_t)$.
Given an indecomposable object $X$ in $\drep(G_t)$ and a level $k\in\kk$,
we can consider the parabolic Verma module $M(X,k)$, and
then its irreducible quotient $L(X,k)$.
In \cite[Problem 6.2]{EtingofII} Etingof asks what is the character of $L(X,k)$,
and computes it in the particular case of $k=1$ and $X$ being the identity
object of $\drep(G_t)$ by giving the limit of the character formula derived from
the Frenkel-Kac vertex operator construction.

We provide a description of the character of $L(X,k)$ when $t$ is transcendental,
and the level together with the highest weight of $X$ form a dominant weight,
in particular, when $k$ is a sufficiently big integer. Putting our method into one sentence,
we are calculating the limit of the Weyl-Kac identity (\cref{stm:relative_finite_kac})
in usual representation categories $\rep(G_n)$ as $n$ goes to $t$. To
do that, we need to establish the limit of $L(X,k)$, and the limit of the
alternating sum over the Weyl group.

The paper goes as follows. In \cref{sec:preliminaries} we review the related definitions of Deligne's
category and an affine Lie algebra in a tensor category, some facts about
Weyl groups, and the statement of Weyl-Kac character series. In \cref{sec:m_l_coherence},
we prove that it is valid to take the limit of
the irreducible objects $L(X,k)$ considered over $G_n$ to be equal to $L(X,k)$
considered over $G_t$.

The non-trivial part of the paper is getting the limit of the right hand side
of the Weyl-Kac formula. To do that, in \cref{sec:limit_dynkin} we introduce a Dynkin diagram
for Deligne’s category as the limit of the finite case Dynkin diagrams around the affine vertex.
The crucial piece is \cref{stm:any_wx_is_stable}, where we prove that restricting the grading
of the summands in the Weyl-Kac series restricts the support of the corresponding Weyl group
elements. We combine these ingredients to get the main result, the stable Weyl-Kac formula,
in \cref{sec:stable_formulas}. In \cref{sec:explicit_reduced},
we also describe the corresponding objects explicitly. Further in \cref{sec:explicit_formulas},
we derive some formulas. In the case
$GL$, the Weyl-Kac formula looks as follows.
\begin{theorem}
  Suppose we have two partitions $\mu$ and $\nu$,
  and a nonnegative integer $k$ such that $k\geq \mu_1+\nu_1$.
  Then we have
  \begin{equation*}
    \chr L([\mu,\nu],k) = \sum_{\text{partition}\ \lambda} (-1)^{|\lambda|} q^{\delta(\lambda,\mu,\nu)} \chr M(\lambda\cdot [\mu,\nu],k)
  \end{equation*}
  for any $t\in \kk$ transcendental over $\Q$,
  where the given a partition $\lambda$ with the Frobenius coordinates
  $\lambda = (p_1,\ldots,p_b \mid q_1,\ldots,q_b)$, we define
  \begin{gather*}
    \delta(\lambda,\mu,\nu) = |\lambda| + \sum_{i=1}^b (k - \mu_{q_i+1} -
    \nu_{p_i+1}),\\
    \lambda\cdot[\mu,\nu] = \\
    [
    (k-\nu_{p_1+1},k-\nu_{p_2+1},\ldots,k-\nu_{p_b+1},
    \mu_1,\ldots,\hat{\mu_{q_1+1}},\ldots,\hat{\mu_{q_2+1}},\ldots,)
    + \lambda
    ,\\
    (k-\mu_{q_1+1},k-\mu_{q_2+1},\ldots,k-\mu_{q_b+1},
    \nu_1,\ldots,\hat{\nu_{q_1+1}},\ldots,\hat{\nu_{q_2+1}},\ldots,)
    + \lambda^t
    ].
  \end{gather*}
\end{theorem}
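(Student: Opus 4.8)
The plan is to specialize the stable Weyl-Kac formula established in \cref{sec:stable_formulas} to the case $G=GL$ and then carry out the combinatorial bookkeeping that converts the abstract alternating sum into the explicit formula above. First I would write down the stable formula, which expresses $\chr L([\mu,\nu],k)$ as an alternating sum $\sum_w (-1)^{\ell(w)} q^{d(w)} \chr M(w\cdot\Lambda,k)$ over those affine Weyl group elements whose support is confined near the affine vertex, the stability being guaranteed by \cref{stm:any_wx_is_stable}. The whole identity arises as the limit of the finite Weyl-Kac formula \cref{stm:relative_finite_kac} for $\rep(GL_n)$ as $n\to t$, so that exactly these stable $w$ survive in the limit.

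Next I would parametrize the contributing elements. Using the explicit reduced description of \cref{sec:explicit_reduced}, I expect a bijection between the relevant $w$ and all partitions $\lambda$, realized through the Frobenius coordinates $\lambda=(p_1,\ldots,p_b\mid q_1,\ldots,q_b)$: each diagonal cell, with arm $p_i$ and leg $q_i$, records one elementary move of $w$ around the affine vertex. Under this bijection the length of $w$ should equal $|\lambda|=\sum_i(p_i+q_i+1)$, which produces the sign $(-1)^{|\lambda|}$.

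The core of the argument is computing the dot action $w\cdot\Lambda$ explicitly. In type $A$ the affine Weyl group acts on a weight by permuting and shifting its coordinates, and I would track the effect of the reflection word encoded by $\lambda$ on the weight built from $[\mu,\nu]$ and the level $k$. The surgery dictated by the $i$-th diagonal cell should delete the parts $\mu_{q_i+1}$ and $\nu_{p_i+1}$, insert the new parts $k-\nu_{p_i+1}$ and $k-\mu_{q_i+1}$ at the front of the two halves, and globally add $\lambda$ to the first component and $\lambda^t$ to the second; matching this to the reflections yields the stated formula for $\lambda\cdot[\mu,\nu]$. In parallel I would compute the total affine degree shift $d(w)$, accumulating $k-\mu_{q_i+1}-\nu_{p_i+1}$ from each diagonal cell together with the contribution $|\lambda|$ coming from the length, so as to recover $\delta(\lambda,\mu,\nu)$.

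The main obstacle will be this translation between the intrinsic affine Weyl group action and the Frobenius-coordinate surgery: one must verify that the hatted deletions, the insertions, and the addition of $\lambda$ and $\lambda^t$ genuinely correspond to a reduced word of the correct length, and that the grading is bookkept consistently through the $n\to t$ limit. Once the bijection and the action on a single weight are pinned down, the sign, the shifted weight, and the $q$-exponent follow by a direct, if intricate, induction on the number of diagonal cells $b$.
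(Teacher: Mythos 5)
Your proposal takes essentially the same route as the paper: the paper proves the stable formula \cref{stm:limit_transcendental} by exactly the coherence-and-limit argument you sketch (resting on \cref{stm:any_wx_is_stable} and \cref{stm:relative_finite_kac}), and then specializes to $GL$ via the bijection between $\wx_{\infty}$ and partitions — realized in \cref{sec:explicit_reduced} through inversion sets (slices) identified with finite downward-closed subsets of the poset $\Z_{>0}^2$ — together with an explicit computation of the dot action of $w_\lambda$ in Frobenius coordinates, proved by induction on $|\lambda|$. Your surgery description of $w_\lambda\cdot\phi$, the length count $l(w_\lambda)=|\lambda|$ giving the sign, and the accumulated $\delta$-exponent $|\lambda|+\sum_i(k-\mu_{q_i+1}-\nu_{p_i+1})$ all coincide with the paper's explicit lemma, so the proposal is correct and matches the paper's proof.
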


Finally, we put the categorical dimensions into the derived character formula
to get the Nekrasov-Okounkov hook length formula, see \cite[Formula (6.12)]{NO06}, \cite{Han08}.
The analogs of the formula in the other types can be
easily derived as well.

\subsection*{Acknowledgments}

I am grateful to Pavel Etingof for numerous insightful discussions and all the time and the support he provided.
I would also like to thank Leonid Rybnikov for a lot of useful conversations on the paper.

\addtocontents{toc}{\protect\setcounter{tocdepth}{2}}
\section{Preliminaries}
\label{sec:preliminaries}

\subsection{Partitions, Frobenius coordinates, and bipartitions}

By a partition $\lambda$ we mean an infinite tuple $(\lambda_1,\lambda_2,\ldots)$ of
nonnegative integers such that $\lambda_i \geq \lambda_{i+1}$ for any $i$, and
only a finite number of $\lambda_i$ are nonzero. Denote the sum $\sum_i
\lambda_i$ by $|\lambda|$.
The notation $\lambda\vdash n$ means that $n=|\lambda|$.
The length $l(\lambda)$ is the smallest nonnegative integer such that
$\lambda_{l(\lambda)+1} = 0$.

A bipartition $\lambda$ is a pair $[\mu,\nu]$ of partitions.
We set $|\lambda| = |\mu|+|\nu|$, $l(\lambda) = l(\mu)+l(\nu)$. 
The notation $\lambda\vdash (m,n)$ means
that $m=|\mu|,n=|\nu|$.

We denote by $P^{+}_\infty$ the set of all bipartitions in the case $GL$, and
the set of all partitions in the cases $O$ and $Sp$.

There is another way to encode the partitions, the
so-called \emph{Frobenius coordinates}. Being evaluated on a partition, they give a pair of
decreasing nonnegative integer sequences of the same length in the following way.
Suppose we want to compute the Frobenius coordinates of a partition $\lambda$.
Let $b$ be the greatest $i$ such that $\lambda_i \geq i$,
and define the first sequence in the coordinates to be equal to $(\lambda_1-1,\lambda_2-2,\ldots,\lambda_b-b)$.
It is easy to see that we get the same $b$ if we start with $\lambda^t$ instead
of $\lambda$. Define the second sequence in the coordinates to be equal to the
first sequence of $\lambda^t$. We write
\begin{equation*}
  \lambda = (\lambda_1-1,\lambda_2-2,\ldots,\lambda_b-b \mid (\lambda^t)_1-1,(\lambda^t)_2-2,\ldots,(\lambda^t)_b-b).
\end{equation*}
From the definition we see that to transpose a partition in the Frobenius
coordinates we need just to swap the sequences. Also one can clearly argue by
induction to show that the Frobenius coordinates give a bijection between the
set of partitions and the set of pairs of decreasing nonnegative integer
sequences of the same length.

\subsection{Classical linear groups}

We are interested in three families of classical linear groups: $GL_n(\kk)$,
$O_n(\kk)$, and $Sp_n(\kk)$,
where $\kk$ is a field of characterictic $0$.
To treat the cases uniformly, we work with a sequence of groups $G_n$ that
represents one of these families, where $G_n$ denotes the group from a family
with the defining $n$-dimensional representation $V$. In the cases $O$ and $Sp$,
$V$ also carries a nondegenerate bilinear form.
Denote the connected component of the identity in $G_n$ by $G_n^0$,
the Lie algebra of $G_n$ by $\lie{g}_n$,
and the commutator $[\lie{g}_n,\lie{g}_n]$ of $\lie{g}_n$ by $\lie{g}'_n$.
To consider only the situations where $\lie{g}'_n$ is simple, we restrict $n$ to
be an element of $I$, where 
$I = \{ n\in\Z \mid n\geq 2 \}$ in the case $GL$,
$I = \{ n\in \Z \mid n \geq 5 \}$ in the case $O$,
and 
$I = \{ n\in 2\Z \mid n\geq 4 \}$
in the case $Sp$.

Let us describe the basic objects explicitly in the
types $A, B, C$ and $D$.
By $e_1,\ldots,e_n$ we denote a suitable basis of $V$,
and by $r$ we denote the rank of $\lie{g}'_n$.

\begin{center}\bf
Type $A$:
\end{center}

\bgroup
\def\arraystretch{1.2}
\begin{tabular}{>{$}l<{$}}
n=r+1,\\
\text{$V$ has no form,} \\
G_n \iso GL_n(\kk), G_n^0 \iso GL_n(\kk), \lie{g}_n \iso \lie{gl}_n(\kk), \lie{g}'_n \iso \lie{sl}_n(\kk). \\
\end{tabular}
\egroup

\begin{center}\bf
Type $B$:
\end{center}

\bgroup
\def\arraystretch{1.2}
\begin{tabular}{>{$}l<{$}}
n=2r+1,\\
(e_i,e_{r+i})=(e_{r+i},e_i)=1,1\leq i\leq r,(e_{2r+1},e_{2r+1})=1,\ \text{all other pairs give $0$,} \\
G_n \iso O_n(\kk), G_n^0 \iso SO_n(\kk), \lie{g}_n \iso \lie{so}_n(\kk), \lie{g}'_n \iso \lie{so}_n(\kk). \\
\end{tabular}
\egroup

\begin{center}\bf
Type $C$:
\end{center}

\bgroup
\def\arraystretch{1.2}
\begin{tabular}{>{$}l<{$}}
n=2r,\\
(e_i,e_{r+i})=-(e_{r+i},e_i)=1, 1\leq i\leq r,\ \text{all other pairs give $0$,} \\
G_n \iso Sp_n(\kk), G_n^0 \iso Sp_n(\kk), \lie{g}_n \iso \lie{sp}_n(\kk), \lie{g}'_n \iso \lie{sp}_n(\kk). \\
\end{tabular}
\egroup

\begin{center}\bf
Type $D$:
\end{center}

\bgroup
\def\arraystretch{1.2}
\begin{tabular}{>{$}l<{$}}
n=2r,\\
(e_i,e_{r+i})=(e_{r+i},e_i)=1, 1\leq i\leq r,\ \text{all other pairs give $0$,} \\
G_n \iso O_n(\kk), G_n^0 \iso SO_n(\kk),\lie{g}_n \iso \lie{so}_n(\kk), \lie{g}'_n \iso \lie{so}_n(\kk). \\
\end{tabular}
\egroup

\smallskip

Choose a Borel subgroup $B_n \subset G_n^0$, and let $T_n\subset B_n$ be a
maximal torus in $B_n$.
Denote the Lie algebra of $T_n$ by $\lie{h}_n$.
Let $P_n$ be the abelian group of
$T_n$-characters, and $\dom{P}_n\subset P_n$ be the dominant weights corresponding to $B_n$.

Let us
describe explicitly these objects case by case. In what follows, we denote by
$E_{i,j}\in\lie{gl}(V)$ the endomorphism that 
sends $e_k$ to $\delta_{j,k}e_i$. In every case, the Cartan subalgebra
$\lie{h}_n$ is
a subspace of the linear span of $E_{i,i}$, $1\leq i \leq n$. Denote by $\eps_i$ the element
of $\dual{\lie{h}_n}$ that evaluates to the coefficient of $E_{i,i}$.
Finally, denote the Killing form on $\lie{g}_n$, and the induced form on $P_n$, by $(\cdot,\cdot)$. 
The Killing form is rescaled so that $(\eps_i,\eps_j) = \delta_{i,j}$.
Given a set $X$, by $\kk X$ we denote the linear span of $X$.

\begin{center}\bf
Type $A$:
\end{center}

\bgroup
\def\arraystretch{1.2}
\begin{tabular}{>{$}l<{$}}
  n = r+1,\\
  \lie{h}_n = \kk\{ E_{i,i} \mid 1\leq i\leq r+1 \},\\
  P_n = \{ \sum_{i=1}^{r+1} \beta_i\eps_i \mid \forall 1\leq i\leq r+1 \:\beta_i\in\Z  \} \iso \Z^{r+1},\\
  P_n^{+} = \{\beta \mid \beta\in P_n, \beta_1\geq\beta_2\geq\ldots\beta_{r+1}  \}.\\
\end{tabular}
\egroup

\begin{center}\bf
Type $B$:
\end{center}

\bgroup
\def\arraystretch{1.2}
\begin{tabular}{>{$}l<{$}}
  n = 2r+1,\\
  \lie{h}_n = \kk\{ E_{i,i} - E_{r+i,r+i}\mid 1\leq i\leq r \},\\
  P_n = \{ \sum_{i=1}^r \beta_i\eps_i \mid \forall 1\leq i\leq r\: 2\beta_i\in\Z, \forall 1\leq i<j\leq r\: \beta_i-\beta_j\in\Z\}\\
  \quad\iso \Z^r \cup \left(\Z + \frac{1}{2}\right)^{\! r},\\
  P_n^{+} = \{ \beta \mid \beta\in P_n, \beta_1\geq\beta_2\geq\ldots\beta_r\geq 0 \}.\\
\end{tabular}
\egroup

\begin{center}\bf
Type $C$:
\end{center}

\bgroup
\def\arraystretch{1.2}
\begin{tabular}{>{$}l<{$}}
  n = 2r,\\
  \lie{h}_n = \kk\{ E_{i,i} - E_{r+i,r+i}\mid 1\leq i\leq r \},\\
  P_n = \{ \sum_{i=1}^r \beta_i\eps_i \mid \forall  1\leq i\leq r \:\beta_i\in\Z \} \iso \Z^r,\\
  P_n^{+} = \{ \beta \mid \beta\in P_n, \beta_1\geq\beta_2\geq\ldots\beta_r\geq 0 \}.\\
\end{tabular}
\egroup

\begin{center}\bf
Type $D$:
\end{center}

\bgroup
\def\arraystretch{1.2}
\begin{tabular}{>{$}l<{$}}
  n =2r,\\
  \lie{h}_n = \kk\{ E_{i,i} - E_{r+i,r+i}\mid 1\leq i\leq r \},\\
  P_n = \{ \sum_{i=1}^r \beta_i\eps_i \mid \forall 1\leq i\leq r\: 2\beta_i\in\Z, \forall 1\leq i<j\leq r\: \beta_i-\beta_j\in\Z\}\\
  \quad\iso \Z^r \cup \left(\Z + \frac{1}{2}\right)^{\! r},\\
  P_n^{+} = \{ \beta \mid \beta\in P_n, \beta_1\geq\beta_2\geq\ldots\beta_{r-1}\geq|\beta_r| \}.\\
\end{tabular}
\egroup

\medskip

Let $\rep(G_n^0)$ be the category of finite-dimensional $G_n^0$-representations.
The irreducible objects of $\rep(G_n^0)$ are enumerated by $\dom{P}_n$.
Denote the irreducible representation corresponding to $\lambda\in\dom{P}_n$ by
$L_\lambda\in \rep(G_n^0)$.

\subsection{Deligne's category}
\label{sec:deligne_category}

Here we sketch the definition of Deligne's category $\drep(G,R)$,
where $R$ is a commutative $\Q[T]$-algebra.
First, we define the tensor category $\drep^{0}(G,R)$.  

\begin{center}\bf
Case $GL$:
\end{center}

The category $\drep^{0}(GL,R)$ is a free rigid monoidal $R$-linear category
generated by an object $V$ of dimension $T\in R$. See \cite[Section 3.1]{CW2011} for
a more detailed explanation and the diagramatic description of the morphisms in
this category. In particular, the endomorphism algebra of
the mixed tensor power $V^{\tens r} \tens (V^*)^{\tens s}$ is the walled Brauer
algebra $B_{r,s}(T)$ over $R$.

\begin{center}\bf
Case $O$:
\end{center}

The category $\drep^{0}(O,R)$ is a free rigid monoidal $R$-linear category
generated by an object $V$ of dimension $T\in R$ with a symmetric isomorphism
$V\iso V^*$. See \cite[Section 2.1]{CH2015} for
a more detailed explanation and the diagramatic description of the morphisms in
this category. In particular, the endomorphism algebra of the object
$V^{\tens r}$ is the Brauer algebra $B_{r}(T)$ over $R$.

\begin{center}\bf
Case $Sp$:
\end{center}

The category $\drep^{0}(Sp,R)$ is a free rigid monoidal $R$-linear category
generated by an object $V$ of dimension $T\in R$ with an anti-symmetric isomorphism
$V\iso V^*$. It can also be constructed using the $O$ case. First, we take
the category
$\drep^{0}(O,\tilde{R})$, where $\tilde{R}$ is the ring $R$ with the $\Q[T]$-algebra
structure twisted by $T \mapsto -T$.
Then we multiply the symmetric braiding morphism $c\: V^{\tens r}\tens V^{\tens r'} \to
V^{\tens r'}\tens V^{\tens r}$ by $(-1)^{rr'}$. 
For more details see
\cite[Section 9.5]{DeligneSecond} and
\cite[Section 2.6]{EtingofII}.

\

\emph{Deligne's category $\drep(G,R)$} is the
Karoubi closure of the additive closure of the category $\drep^{0}(G,R)$. 
From the universal property of a Karuobi closure, it follows that for any
commutative $R$-algebra $S$ we have a linear tensor
functor $\tens S\: \drep(G,R)\to\drep(G,S)$.

For $t\in\kk$, denote by $\kk_t$ the field $\kk$ with the $\Q[T]$-algebra
structure sending $T$ to $t$.
Putting $\kk_t$ into Deligne's category construction, we get a $\kk$-linear
category $\drep(G,\kk_t)$ which we denote by $\drep(G_t)$.
We proceed to describe the indecomposable objects of $\drep(G_t)$.
In what follows, we also suppose that $t\neq 0$ for the sake of simplicity, as
this case can be safely dropped for our purposes.
Denote the symmetric group on $k$ elements by $\Sigma_k$.
Recall that $P^{+}_\infty$ denotes the set of all bipartitions in the case $GL$, and
the set of all partitions in the cases $O$ and $Sp$.

\begin{center}\bf
Case $GL$:
\end{center}

\begin{proposition}
  \label{stm:gl_brauer_step}
  There exists an idempotent $e_{r,s}\in B_{r,s}(t)$ such that
  $B_{r,s}(t)/(e_{r,s}) \iso \kk[\Sigma_r\times\Sigma_s]$ and
  $B_{r-1,s-1}(t) \iso e_{r,s}B_{r,s}(t)e_{r,s}$. The latter isomorphism
  is $x \mapsto \frac{1}{t}\psi_{r,s}x\hat{\psi}_{r,s}$ for some diagrams
  $\psi_{r,s}\in Hom(V^{\tens r-1}\tens (V^*)^{\tens s-1}, V^{\tens r}\tens (V^*)^{\tens s})$
  and
  $\hat{\psi}_{r,s}\in Hom(V^{\tens r}\tens (V^*)^{\tens s}, V^{\tens r-1}\tens (V^*)^{\tens s-1})$.
\end{proposition}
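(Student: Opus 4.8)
The plan is to work entirely inside the diagram calculus of the walled Brauer algebra, using that $B_{r,s}(t) = \mathrm{End}(V^{\tens r}\tens (V^*)^{\tens s})$ is spanned by walled Brauer diagrams, that multiplication is vertical stacking of diagrams, and that each closed loop formed in a composition contributes a scalar factor $\dim V = t$. Let $\psi_{r,s}$ be the diagram that is the identity on the first $r-1$ strands of $V$ and the first $s-1$ strands of $V^*$ together with a coevaluation $\kk\to V\tens V^*$ creating one new $V\tens V^*$ pair at the wall, and let $\hat\psi_{r,s}$ be the mirror diagram contracting that pair by an evaluation $V\tens V^*\to\kk$. The single identity on which everything rests is the loop relation $\hat\psi_{r,s}\psi_{r,s} = t\cdot\mathrm{id}_{V^{\tens r-1}\tens(V^*)^{\tens s-1}}$, since creating a pair at the wall and immediately contracting it closes exactly one loop. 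Setting $e_{r,s}=\frac{1}{t}\psi_{r,s}\hat\psi_{r,s}$ (legitimate as $t\neq 0$), idempotency is then one line: $e_{r,s}^2 = \frac{1}{t^2}\psi_{r,s}(\hat\psi_{r,s}\psi_{r,s})\hat\psi_{r,s} = \frac{1}{t^2}\psi_{r,s}(t\cdot\mathrm{id})\hat\psi_{r,s} = e_{r,s}$.

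For the quotient I would introduce the \emph{kill-the-arcs} map $\pi\: B_{r,s}(t)\to\kk[\Sigma_r\times\Sigma_s]$ sending a walled Brauer diagram to $0$ if it contains any horizontal arc and to its underlying permutation pair otherwise. Because through-strands cannot cross the wall, the arc-free diagrams are exactly $\Sigma_r\times\Sigma_s$; and because the number of arcs of a product is at least the number of arcs of either factor, $\pi$ is a surjective algebra homomorphism whose kernel $J$ is the span of all diagrams with at least one arc. It then remains to identify $J$ with the two-sided ideal $(e_{r,s})$. The inclusion $(e_{r,s})\subseteq J$ is immediate since $e_{r,s}\in J$. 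For the reverse inclusion I would argue diagrammatically: given any diagram with $\ell\geq 1$ arcs, pre- and post-composing $e_{r,s}$ with suitable permutation diagrams slides one top arc and one bottom arc to the wall, producing the diagram (up to a power of $t$) as an element of $(e_{r,s})$; stacking $\ell$ such contractions realizes every element of $J$ inside $(e_{r,s})$. This yields $B_{r,s}(t)/(e_{r,s}) = B_{r,s}(t)/J \iso \kk[\Sigma_r\times\Sigma_s]$.

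For the corner algebra I would show that $\Phi(x)=\frac{1}{t}\psi_{r,s}x\hat\psi_{r,s}$ defines an isomorphism $\Phi\: B_{r-1,s-1}(t)\to e_{r,s}B_{r,s}(t)e_{r,s}$, with every verification reduced to the loop relation. It lands in the corner since $e_{r,s}\Phi(x)e_{r,s}=\Phi(x)$ after cancelling two loops; it is multiplicative since $\Phi(x)\Phi(y)=\frac{1}{t^2}\psi_{r,s}x(\hat\psi_{r,s}\psi_{r,s})y\hat\psi_{r,s}=\frac{1}{t}\psi_{r,s}(xy)\hat\psi_{r,s}=\Phi(xy)$; it is unital since $\Phi(\mathrm{id})=e_{r,s}$; it is injective since $\hat\psi_{r,s}\Phi(x)\psi_{r,s}=t\,x$ recovers $x$; and it is surjective since any $z=e_{r,s}ze_{r,s}$ equals $\Phi(\frac{1}{t}\hat\psi_{r,s}z\psi_{r,s})$. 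Here I would use that $B_{r-1,s-1}(t)=\mathrm{End}(V^{\tens r-1}\tens(V^*)^{\tens s-1})$, so that $\frac{1}{t}\hat\psi_{r,s}z\psi_{r,s}$ is genuinely an element of $B_{r-1,s-1}(t)$.

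I expect the main obstacle to be the ideal-generation step $J\subseteq(e_{r,s})$: once the diagram conventions are fixed, this is the only part requiring a genuine combinatorial reduction rather than a formal consequence of the loop relation, and care is needed to track the powers of $t$ produced when sliding arcs to the wall with permutations. Everything else — idempotency, the quotient homomorphism, and the four checks for $\Phi$ — is formal given $t\neq 0$ and the single relation $\hat\psi_{r,s}\psi_{r,s}=t\cdot\mathrm{id}$.
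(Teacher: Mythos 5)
Your proof is correct, but it is worth knowing that the paper itself does not prove this proposition: its ``proof'' is a pointer to the literature, citing \cite[Propositions 2.1, 2.3 and (2)]{Cox2008} for the two isomorphisms and \cite[Section 4.4]{CW2011} for the diagrams $\psi_{r,s}$ and $\hat\psi_{r,s}$. So your argument is genuinely different in that it is self-contained: you reduce everything to the single loop relation $\hat\psi_{r,s}\psi_{r,s}=t\cdot\mathrm{id}$ together with the standard fact that the propagating number of a walled Brauer diagram cannot increase under multiplication (equivalently, the arc count cannot decrease), which is exactly what makes your kill-the-arcs map $\pi$ a homomorphism with kernel $J$ and makes the four corner-algebra checks formal. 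What the citation buys the paper is brevity and consistency with the sources it relies on elsewhere (the same references supply the idempotents $e_\lambda$ of the next proposition and the classification behind \cref{stm:gl_brauer_enum}, and the analogous facts over $\bad_m^{-1}\Q[T]$ rather than just over $\kk_t$); what your route buys is transparency about where $t\neq 0$ enters and why the map has the specific form $x\mapsto\frac{1}{t}\psi_{r,s}x\hat\psi_{r,s}$. One remark on the step you flagged as the main obstacle: it is easier than you fear. Since in a walled Brauer diagram the number of top arcs equals the number of bottom arcs, any $d\in J$ has both a top and a bottom arc; after pre- and post-composing with permutation diagrams to move one top arc and one bottom arc to the wall positions $\{r,r+1\}$, the resulting diagram $d''$ factors as $d''=\psi_{r,s}c$, whence
\begin{equation*}
  e_{r,s}d'' \;=\; \tfrac{1}{t}\,\psi_{r,s}\hat\psi_{r,s}\psi_{r,s}\,c \;=\; \psi_{r,s}c \;=\; d'',
\end{equation*}
so $d''\in(e_{r,s})$ on the nose: the loop factor $t$ cancels the $\frac{1}{t}$ in $e_{r,s}$, and since composing with permutation diagrams never closes a loop, no powers of $t$ need to be tracked at all, and no iteration over the $\ell$ arcs is required.
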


\begin{proof}
  See \cite[Proposition 2.1, Proposition 2.3 and (2)]{Cox2008} for the two
  isomorphisms, and see \cite[Section 4.4]{CW2011} for the diagrams $\psi_{r,s}$
  and $\hat{\psi}_{r,s}$.
\end{proof}

For any bipartition $\lambda\vdash (r,s)$, pick a primitive idempotent
$z_\lambda\in \kk[\Sigma_r\times\Sigma_s]$ in the corresponding conjugacy class.

\begin{proposition}
  \label{stm:gl_brauer_down}
  For any bipartition $\lambda\vdash (r,s)$, there exists a primitive idempotent
  $e_\lambda\in B_{r,s}(t)$ such that the image of $e_\lambda$ after the
  factorization by $(e_{r,s})$ is $z_\lambda$. 
\end{proposition}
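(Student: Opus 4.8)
The plan is to realize $e_\lambda$ as a primitive idempotent cutting out the projective cover of an inflated simple module, and then to repair its image so that it equals $z_\lambda$ on the nose. Write $B = B_{r,s}(t)$ and $A = \kk[\Sigma_r\times\Sigma_s]$, and let $\pi\colon B\to A$ be the quotient map of \cref{stm:gl_brauer_step}, whose kernel is the two-sided ideal $J = (e_{r,s})$. Since we are in characteristic $0$ and symmetric-group representations are defined over $\Q$, the algebra $A$ is semisimple and split, so $z_\lambda$ is a primitive idempotent with $Az_\lambda \iso S_\lambda$, the simple $A$-module labelled by $\lambda$. The feature that makes this more than a routine lifting is that $J$ is an \emph{idempotent} ideal, $J^2 = J$ (because $e_{r,s}Be_{r,s}$ contains $e_{r,s}$), so $J$ is very far from nilpotent and the usual ``lift idempotents modulo a nilpotent ideal'' theorem does not apply.

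First I would inflate $S_\lambda$ along $\pi$ to a $B$-module $\pi^{*}S_\lambda$; it is simple because $B$ acts through $A$, so its $B$-submodules are exactly its $A$-submodules. As $B$ is finite dimensional it is semiperfect, so $\pi^{*}S_\lambda$ has a projective cover $P_\lambda = Be_\lambda$ with $e_\lambda$ primitive. Next I would compute the reduction $\overline{P_\lambda} := P_\lambda/JP_\lambda$, which is canonically $A\,\pi(e_\lambda)$ and hence a projective $A$-module. Because $A$ is semisimple it is determined by its multiplicities, and for every simple $S_\mu$ one has $\operatorname{Hom}_A(\overline{P_\lambda},S_\mu) = \operatorname{Hom}_B(P_\lambda,\pi^{*}S_\mu) = \delta_{\lambda\mu}\,\kk$, using that $S_\mu$ is killed by $J$ and that $P_\lambda$ is the projective cover of the split simple $\pi^{*}S_\lambda$. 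Therefore $A\,\pi(e_\lambda)\iso S_\lambda\iso Az_\lambda$, which says precisely that $\pi(e_\lambda)$ and $z_\lambda$ are conjugate idempotents in $A$.

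It remains to upgrade conjugacy to equality, and this is the step I expect to be the only real nuisance. I would show that $\pi$ carries units onto units, $\pi(B^{\times}) = A^{\times}$: for a surjection of finite-dimensional algebras one has $\pi(\operatorname{rad}B) = \operatorname{rad}A$, so $B/\operatorname{rad}B$ splits as $(A/\operatorname{rad}A)\times C$ with the image of $J$ equal to the factor $C$; given $a\in A^{\times}$ one picks any lift, corrects its $C$-component by an element of $J$ to make it invertible modulo $\operatorname{rad}B$, and thereby obtains a unit of $B$ over $a$. Writing $z_\lambda = u\,\pi(e_\lambda)\,u^{-1}$ with $u\in A^{\times}$ and choosing $\tilde u\in B^{\times}$ with $\pi(\tilde u)=u$, the element $\tilde u\,e_\lambda\,\tilde u^{-1}$ is a primitive idempotent of $B$ (conjugate to $e_\lambda$, hence still cutting out $P_\lambda$) whose image under $\pi$ is exactly $z_\lambda$; this is the desired $e_\lambda$. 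Note that nowhere does the argument use semisimplicity of $B$ itself, so the statement holds for every nonzero $t$, not only transcendental ones.
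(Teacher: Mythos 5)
Your proof is correct, and it is genuinely more self-contained than what the paper does: the paper offers no argument at all for this proposition, deferring entirely to \cite[Section 4.3]{CW2011}, where the lift is produced from the standard idempotent-ideal formalism relating the module categories of $B_{r,s}(t)$, $e_{r,s}B_{r,s}(t)e_{r,s}$, and $B_{r,s}(t)/(e_{r,s})$ (so that primitive idempotents of the quotient correspond to the projective covers of the simples it sees, with the conclusion stated up to conjugacy). Your route proves the same lifting directly from semiperfectness: inflating $S_\lambda$, taking the projective cover $P_\lambda = Be_\lambda$, and computing $\operatorname{Hom}_A(P_\lambda/JP_\lambda, S_\mu) = \operatorname{Hom}_B(P_\lambda, \pi^*S_\mu) = \delta_{\lambda\mu}\kk$ is exactly the right mechanism, and your preliminary remark that $J=(e_{r,s})$ satisfies $J^2=J$, so that nilpotent-kernel lifting is unavailable, correctly identifies why some such module-theoretic detour is forced. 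Two things your write-up buys that the citation does not make explicit: first, the upgrade from conjugacy to the on-the-nose equality $\pi(e_\lambda) = z_\lambda$ demanded by the statement, via surjectivity of $\pi$ on units --- and your proof of that step is sound, since $\pi(\operatorname{rad}B)=\operatorname{rad}A$, the image of $J$ in $B/\operatorname{rad}B$ is a product of matrix factors that can be corrected to $1$ by an element of $J=\ker\pi$ without disturbing the image under $\pi$, and invertibility modulo the nilpotent radical gives invertibility in $B$; second, the observation that nothing requires transcendence of $t$, only $t\neq 0$ (which is the standing hypothesis needed for \cref{stm:gl_brauer_step} itself, as the isomorphism there involves $\frac{1}{t}$). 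One step worth a sentence if you write this up: the passage from $A\pi(e_\lambda)\iso Az_\lambda$ to conjugacy of the two idempotents uses Krull--Schmidt to cancel and get $A(1-\pi(e_\lambda))\iso A(1-z_\lambda)$ as well; and $\pi(e_\lambda)\neq 0$ should be noted as a consequence of $P_\lambda/JP_\lambda\iso S_\lambda\neq 0$. Neither is a gap, just bookkeeping.
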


\begin{proof}
  See \cite[Section 4.3]{CW2011}.
\end{proof}

\begin{theorem}
  \label{stm:gl_brauer_enum}
  The indecomposable objects of $\drep(G_t)$ are parametrized by the set
  $\dom{P}_\infty$ of all bipartitions, with the object $L_\lambda = (V^{\tens
    r}\tens (V^*)^{\tens s}, e_\lambda)$ corresponding to a bipartition $\lambda\vdash (r,s)$.
  The set $\{\lambda \mid l\geq 0, \lambda \vdash (r-l,s-l)\}$ of bipartitions enumerates the
  isomorphism classes of indecomposable objects coming from the object
  $V^{\tens r}\tens(V^*)^{\tens s}$.
\end{theorem}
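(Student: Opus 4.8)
The plan is to identify indecomposable objects with primitive idempotents in the walled Brauer algebras and then to enumerate the latter by the idempotent recursion supplied by \cref{stm:gl_brauer_step}. Since $\drep(G_t)$ is the Karoubi closure of the additive closure of $\drep^0(G_t)$, it is a $\kk$-linear, idempotent-complete, $\operatorname{Hom}$-finite category (each $\operatorname{Hom}(V^{\tens r}\tens(V^*)^{\tens s}, V^{\tens r'}\tens(V^*)^{\tens s'})$ is spanned by finitely many walled Brauer diagrams), hence Krull--Schmidt. Every object is a retract of a finite direct sum $\bigoplus_i V^{\tens r_i}\tens(V^*)^{\tens s_i}$, and an indecomposable summand of such a sum is, up to isomorphism, an indecomposable summand of one of the $V^{\tens r_i}\tens(V^*)^{\tens s_i}$. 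Thus every indecomposable object is a summand of a single $V^{\tens r}\tens(V^*)^{\tens s}$, and it suffices to enumerate the primitive idempotents of $\operatorname{End}(V^{\tens r}\tens(V^*)^{\tens s}) = B_{r,s}(t)$ up to conjugacy, equivalently the isomorphism classes of simple $B_{r,s}(t)$-modules, which I write $\operatorname{Irr}(B_{r,s}(t))$.

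First I would run the idempotent recursion. For the idempotent $e = e_{r,s}$ of \cref{stm:gl_brauer_step}, the general principle that the simple modules of a finite-dimensional algebra $A$ split into those annihilated by $e$ (the simples of $A/AeA$) and those with $eM \neq 0$ (in bijection with the simples of $eAe$ via $M\mapsto eM$) gives, using $(e_{r,s}) = B_{r,s}(t)\,e_{r,s}\,B_{r,s}(t)$ together with the two isomorphisms of \cref{stm:gl_brauer_step}, a bijection
\begin{equation*}
  \operatorname{Irr}(B_{r,s}(t)) \iso \operatorname{Irr}(\kk[\Sigma_r\times\Sigma_s]) \sqcup \operatorname{Irr}(B_{r-1,s-1}(t)).
\end{equation*}
Because $\Q$ is a splitting field for every symmetric group, $\kk[\Sigma_r\times\Sigma_s]$ is split semisimple and its simples are indexed by the bipartitions $\lambda\vdash(r,s)$. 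Inducting on $\min(r,s)$, with base cases $B_{r,0}(t)\iso\kk\Sigma_r$ and $B_{0,s}(t)\iso\kk\Sigma_s$, then indexes $\operatorname{Irr}(B_{r,s}(t))$ by $\{\lambda \mid l\geq 0,\ \lambda\vdash(r-l,s-l)\}$, which is the second assertion of the theorem.

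Next I would promote this count of simple modules to the objects themselves, which is where the diagrams $\psi_{r,s},\hat\psi_{r,s}$ of \cref{stm:gl_brauer_step} do the work. Since $x\mapsto\frac1t\psi_{r,s}x\hat\psi_{r,s}$ is an algebra isomorphism $B_{r-1,s-1}(t)\iso e_{r,s}B_{r,s}(t)e_{r,s}$ sending the unit to $e_{r,s}$, one reads off $\frac1t\hat\psi_{r,s}\psi_{r,s} = \mathrm{id}$ and $\frac1t\psi_{r,s}\hat\psi_{r,s} = e_{r,s}$, so $\psi_{r,s}$ and $\frac1t\hat\psi_{r,s}$ are mutually inverse and witness an isomorphism of objects $(V^{\tens r}\tens(V^*)^{\tens s}, e_{r,s})\iso V^{\tens(r-1)}\tens(V^*)^{\tens(s-1)}$. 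Hence the summands of $V^{\tens r}\tens(V^*)^{\tens s}$ lying under $e_{r,s}$ are exactly the summands of $V^{\tens(r-1)}\tens(V^*)^{\tens(s-1)}$, which by induction are the $L_\lambda$ with $\lambda\vdash(r-l,s-l)$, $l\geq 1$; the remaining summands, lifted from $\kk[\Sigma_r\times\Sigma_s]$ by the primitive idempotents $e_\lambda$ of \cref{stm:gl_brauer_down}, are the $L_\lambda = (V^{\tens r}\tens(V^*)^{\tens s}, e_\lambda)$ with $\lambda\vdash(r,s)$. This shows at once that $L_\lambda$ is well defined independently of the ambient tensor power and that it occurs in $V^{\tens r}\tens(V^*)^{\tens s}$ precisely for $\lambda\vdash(r-l,s-l)$. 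Ranging over all $(r,s)$ exhausts the indecomposable objects and gives the claimed bijection with $\dom{P}_\infty$; distinct bipartitions index distinct simple modules (found inside a common $V^{\tens r}\tens(V^*)^{\tens s}$ for $r,s$ large), so the $L_\lambda$ are pairwise non-isomorphic.

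The main obstacle I anticipate is the bookkeeping in the last step: one must verify that the object isomorphism $(V^{\tens r}\tens(V^*)^{\tens s}, e_{r,s})\iso V^{\tens(r-1)}\tens(V^*)^{\tens(s-1)}$ intertwines the recursion on algebras with the recursion on objects compatibly at every stage, so that the label $\lambda$ attached to a summand is genuinely independent of the tensor power in which it is produced. A secondary technical point is to check that the idempotents $e_\lambda$ together with the pulled-back idempotents $\frac1t\psi_{r,s}e_{\lambda'}\hat\psi_{r,s}$ form a \emph{complete} orthogonal set of primitives, i.e.\ that they account for all of $\operatorname{Irr}(B_{r,s}(t))$; this follows from the bijection above once one confirms the two families correspond respectively to the simples killed by $e_{r,s}$ and those detected by it.
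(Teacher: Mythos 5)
Your argument is correct and is, in substance, the argument the paper intends: the printed proof of \cref{stm:gl_brauer_enum} simply declares the theorem ``a rephrasing'' of \cref{stm:gl_brauer_step} and \cref{stm:gl_brauer_down} and defers to \cite[Theorem 2.7]{Cox2008} and \cite[Theorem 4.5.1]{CW2011}, and what you wrote is exactly the standard argument behind those citations — Krull--Schmidt for the Hom-finite Karoubi envelope, the bijection $\mathrm{Irr}(B_{r,s}(t))\iso \mathrm{Irr}(\kk[\Sigma_r\times\Sigma_s])\sqcup \mathrm{Irr}(B_{r-1,s-1}(t))$ coming from the idempotent $e_{r,s}$, and the object isomorphism $(V^{\tens r}\tens (V^*)^{\tens s},e_{r,s})\iso V^{\tens r-1}\tens (V^*)^{\tens s-1}$ realized by $\psi_{r,s}$ and $\frac{1}{t}\hat{\psi}_{r,s}$. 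One caveat on a detail you claim to ``read off'': the identity $\hat{\psi}_{r,s}\psi_{r,s}=t\cdot\mathrm{id}$ is not a formal consequence of \cref{stm:gl_brauer_step} (taking $x=y=1$ in multiplicativity only returns $e_{r,s}^2=e_{r,s}$); it is a one-loop diagram computation, which is indeed what the cited description of $\psi_{r,s},\hat{\psi}_{r,s}$ in \cite[Section 4.4]{CW2011} provides, so the step is true but should be sourced there rather than derived from the algebra isomorphism.

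There is one genuine, though easily repaired, slip: your justification that distinct bipartitions give non-isomorphic objects — ``found inside a common $V^{\tens r}\tens (V^*)^{\tens s}$ for $r,s$ large'' — fails whenever $\lambda\vdash(r,s)$ and $\mu\vdash(r',s')$ have $r-s\neq r'-s'$. All summands of $V^{\tens R}\tens(V^*)^{\tens S}$ are labeled by bipartitions of $(R-l,S-l)$, so any two labels occurring in a common tensor power have equal difference, and no common ambient power exists in that case. The fix is a Hom-vanishing observation: a walled Brauer diagram from $(r,s)$ to $(r',s')$ forces $r-s=r'-s'$, since each strand either passes through the wall-respecting way (preserving the count of $V$- versus $V^*$-strands) or pairs a $V$ with a $V^*$ on the same side; hence the Hom space between the two mixed tensor powers is zero and no nonzero summand of one can be isomorphic to a summand of the other. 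With that one line added, your write-up is actually more self-contained than the paper's proof, which takes the pairwise non-isomorphy of the $L_\lambda$ as given.
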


\begin{proof}
  Given the fact that the objects $L_\lambda$ are not isomorphic for different
  $\lambda$, this is merely a rephrasing of \cref{stm:gl_brauer_step} and
  \cref{stm:gl_brauer_down}.
  See \cite[Theorem 2.7]{Cox2008}, \cite[Theorem 4.5.1]{CW2011}. 
\end{proof}

\begin{center}\bf
Cases $O$ and $Sp$:
\end{center}

\begin{proposition}
  \label{stm:o_brauer_step}
  There exists an idempotent $e_r\in B_r(t)$ such that
  $B_r(t)/(e_r) \iso \kk[\Sigma_r]$ and
  $B_{r-2}(t)\iso e_rB_r(t)e_r $. The latter isomorphism
  is $x \mapsto \frac{1}{t}\psi_{r}x\hat{\psi}_{r}$ for some diagrams
  $\psi_{r}\in Hom(V^{\tens r-2}, V^{\tens r})$
  and
  $\hat{\psi}_{r}\in Hom(V^{\tens r}, V^{\tens r-2})$.
\end{proposition}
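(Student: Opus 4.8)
The plan is to realize $e_r$ diagrammatically as a single normalized contraction and then to match the two claimed isomorphisms against the standard stratification of $B_r(t)$ by the number of through-strands. Recall that $B_r(t)$ has a $\kk$-basis of Brauer diagrams (perfect matchings of $r$ top and $r$ bottom points), that multiplication is concatenation with each closed loop replaced by the scalar $t$, and that the permutation diagrams span a copy of $\kk[\Sigma_r]$ inside $B_r(t)$. I would take $\psi_r$ to be the morphism that is the identity on the first $r-2$ strands and a coevaluation (a cup creating two new outputs from the isomorphism $V\iso V^*$) on the last pair, and $\hat\psi_r$ the corresponding evaluation (a cap) on the last pair; these live in the asserted $Hom$-spaces. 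A one-loop computation gives $\hat\psi_r\psi_r = t\cdot\mathrm{id}_{V^{\tens r-2}}$, so setting $e_r = \frac1t\psi_r\hat\psi_r$ we get $e_r^2 = \frac1{t^2}\psi_r(\hat\psi_r\psi_r)\hat\psi_r = \frac1t\psi_r\hat\psi_r = e_r$, confirming that $e_r$ is the promised idempotent (the contraction on the last two strands, divided by $t$).

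For the quotient I would show that the two-sided ideal $(e_r)$ is exactly the span of all Brauer diagrams possessing at least one horizontal edge, equivalently all diagrams with fewer than $r$ through-strands. That this span is a two-sided ideal follows from the fact that the number of through-strands cannot increase under multiplication, while the permutation diagrams (the only ones with $r$ through-strands) form the subalgebra $\kk[\Sigma_r]$; hence multiplying a diagram that has a horizontal edge by anything preserves the presence of a horizontal edge. The reverse inclusion, that every non-permutation diagram lies in $(e_r)$, comes from conjugating and multiplying copies of $e_r$ by permutation diagrams: a single contraction is moved to any pair of positions by conjugating $e_r$ by a transposition, and nested contractions arise as products, so one builds any partial matching. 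Consequently $B_r(t)/(e_r)$ has the images of the permutation diagrams as a basis, with the induced multiplication agreeing with that of $\Sigma_r$ (any product that would create a loop or a horizontal edge is killed), giving $B_r(t)/(e_r)\iso\kk[\Sigma_r]$.

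For the corner algebra I would verify directly that $\Phi\colon B_{r-2}(t)\to e_rB_r(t)e_r$, $\Phi(x)=\frac1t\psi_r x\hat\psi_r$, is a unital algebra isomorphism. The relation $\hat\psi_r\psi_r=t\cdot\mathrm{id}$ yields at once $\Phi(x)\Phi(y)=\frac1{t^2}\psi_r x(\hat\psi_r\psi_r)y\hat\psi_r=\Phi(xy)$ and $\Phi(\mathrm{id}_{V^{\tens r-2}})=e_r$, the unit of the corner. The two-sided inverse is $y\mapsto\frac1t\hat\psi_r y\psi_r$: composing the two maps in either order and using $\hat\psi_r\psi_r=t\cdot\mathrm{id}$ together with the fact that $e_r$ acts as the identity on $e_rB_r(t)e_r$ returns the identity. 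Diagrammatically this is the evident bijection: the elements of $e_rB_r(t)e_r$ are precisely the diagrams carrying the fixed cup on the last two top points and the fixed cap on the last two bottom points, and erasing that cup--cap pair recovers an arbitrary Brauer diagram on $r-2$ strands.

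The main obstacle I expect is the reverse inclusion in the second step, namely checking that $(e_r)$ exhausts \emph{all} non-permutation diagrams and not merely some of them. This is a genuinely combinatorial claim about generating arbitrary partial matchings from a single contraction under conjugation and multiplication, and it is cleanest to invoke the cellular, or standard-basis, description of the Brauer algebra, exactly as in the analogous $GL$ statement \cref{stm:gl_brauer_step}; the reference \cite{CH2015} together with the standard Brauer-algebra literature records precisely this stratification.
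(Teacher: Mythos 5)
Your argument is correct, but it is genuinely different in character from what the paper does: the paper's entire proof is a citation to \cite[Lemma 2.1 and (2.1)]{Cox2009} for the two isomorphisms, plus the remark that $\psi_r$ and $\hat\psi_r$ are the $GL$-case diagrams $\psi_{r-1,1}$, $\hat{\psi}_{r-1,1}$ after identifying $V$ with $V^*$ --- which, reassuringly, is exactly the cup/cap pair you chose, so your $e_r=\frac1t\psi_r\hat\psi_r$ is the same normalized contraction idempotent (note that both your idempotency computation and the paper's statement implicitly use $t\neq 0$, which the paper assumes in this section). What you supply that the paper does not is a self-contained verification: the relation $\hat\psi_r\psi_r=t\cdot\mathrm{id}$ driving both the idempotency and the mutually inverse corner maps $x\mapsto\frac1t\psi_r x\hat\psi_r$, $y\mapsto\frac1t\hat\psi_r y\psi_r$ is checked correctly, and your identification of $(e_r)$ with the span of diagrams having fewer than $r$ through-strands is the standard stratification of the Brauer algebra. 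The one step you rightly flag as nontrivial --- that \emph{every} non-permutation diagram lies in $(e_r)$, i.e.\ that a single contraction generates the whole low-through-strand ideal under multiplication by permutation diagrams --- is precisely the content the paper outsources to \cite{Cox2009}, where it is established via the tower/cellular structure, so your fallback of invoking that structure lands you on the same foundation. In short: the citation buys brevity and defers the combinatorial lemma to the literature; your route buys transparency, makes the diagrams and the role of the through-strand filtration explicit, and mirrors the proof of the analogous $GL$ statement \cref{stm:gl_brauer_step}, at the cost of having to prove (or cite) the ideal-generation claim yourself.
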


\begin{proof}
  See \cite[Lemma 2.1 and (2.1)]{Cox2009} for the two isomorphisms. The diagrams
  $\psi_r$ and $\hat{\psi}_r$ are just $\psi_{r-1,1}$ and $\hat{\psi}_{r-1,1}$
  from the case $GL$ after the identification of $V$ and $V^*$.
\end{proof}

For any partition $\lambda\vdash r$, pick a primitive idempotent
$z_\lambda\in \kk[\Sigma_r]$ in the corresponding conjugacy class.

\begin{proposition}
  \label{stm:o_brauer_down}
  For any partition $\lambda\vdash r$, there exists a primitive idempotent
  $e_\lambda\in B_r(t)$ such that the image of $e_\lambda$ after the
  factorization by $(e_r)$ is $z_\lambda$. 
\end{proposition}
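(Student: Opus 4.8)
The plan is to read the statement as an idempotent-lifting problem: we must lift the primitive idempotent $z_\lambda\in\kk[\Sigma_r]$ through the surjection $\pi\colon B_r(t)\to B_r(t)/(e_r)\iso\kk[\Sigma_r]$ supplied by \cref{stm:o_brauer_step}. This is the Brauer-algebra mirror of \cref{stm:gl_brauer_down}, and since \cref{stm:o_brauer_step} provides exactly the same recollement data for $B_r(t)$ that \cref{stm:gl_brauer_step} provides for the walled Brauer algebra --- a quotient onto a group algebra together with the identification $e_rB_r(t)e_r\iso B_{r-2}(t)$ --- I would run the walled-Brauer argument of \cite[Section 4.3]{CW2011} essentially verbatim, replacing the walled-Brauer data $e_{r,s},\psi_{r,s},\hat\psi_{r,s}$ and $\kk[\Sigma_r\times\Sigma_s]$ by the Brauer data $e_r,\psi_r,\hat\psi_r$ and $\kk[\Sigma_r]$ furnished by \cref{stm:o_brauer_step}.

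Concretely, since $B_r(t)$ is a finite-dimensional $\kk$-algebra, its primitive idempotents are precisely those $f$ for which $B_r(t)f$ is an indecomposable projective, i.e.\ the projective cover of a simple module. So I would first inflate the simple $\kk[\Sigma_r]$-module $S_\lambda$ attached to $z_\lambda$ along $\pi$ to a simple $B_r(t)$-module $\tilde S_\lambda$ (inflation of a simple along a surjection is again simple, as $(e_r)$ acts by zero), and then let $e_\lambda$ be a primitive idempotent with $B_r(t)e_\lambda$ the projective cover of $\tilde S_\lambda$. Reducing modulo $(e_r)$, the right-exact functor $-\otimes_{B_r(t)}\kk[\Sigma_r]$ carries the indecomposable projective $B_r(t)e_\lambda$ to the projective $\kk[\Sigma_r]$-module $\kk[\Sigma_r]\pi(e_\lambda)$, whose top is still $S_\lambda$; as $\kk[\Sigma_r]$ is semisimple this forces $\kk[\Sigma_r]\pi(e_\lambda)\iso S_\lambda$. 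Hence $\pi(e_\lambda)$ is a nonzero primitive idempotent lying in the conjugacy class indexed by $\lambda$, and after adjusting the earlier free choice we may simply take $z_\lambda=\pi(e_\lambda)$.

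The step needing care --- and the only genuine obstacle --- is controlling the recollement so that $\pi(e_\lambda)$ is nonzero and primitive rather than collapsing into $(e_r)$. Nonvanishing holds because $\tilde S_\lambda$ is inflated from $\kk[\Sigma_r]$ and is therefore annihilated by $(e_r)$, whence $e_\lambda\notin(e_r)=\ker\pi$; primitivity of $\pi(e_\lambda)$ then follows from the indecomposability of the projective cover of $S_\lambda$ over the semisimple quotient. Packaging this cleanly amounts to checking that the simple $B_r(t)$-modules inflated from $\kk[\Sigma_r]$ are exactly those not killed by the Schur-type functor $e_r(-)$ attached to \cref{stm:o_brauer_step}, which is the same bookkeeping carried out in the $GL$ case. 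When $t$ is transcendental this obstacle disappears outright: $B_r(t)$ is split semisimple, $\pi$ is the projection onto the product of matrix factors indexed by the partitions $\lambda\vdash r$, and one may simply take $e_\lambda$ to be a primitive idempotent of the matrix factor attached to $\lambda$, whose primitivity and image under $\pi$ are then immediate.
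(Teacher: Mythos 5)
Your proof is correct, but note that the paper itself gives no argument here: its ``proof'' is a pointer to \cite[Section 3.2]{CH2015} (just as \cref{stm:gl_brauer_down} points to \cite[Section 4.3]{CW2011}). What you have done is reconstruct, from the recollement data of \cref{stm:o_brauer_step} alone, the standard idempotent-lifting argument that the cited references run: inflate the simple $\kk[\Sigma_r]$-module $S_\lambda$ along $\pi\colon B_r(t)\to B_r(t)/(e_r)$, take $e_\lambda$ with $B_r(t)e_\lambda$ the projective cover of the inflation, and observe that applying $\kk[\Sigma_r]\otimes_{B_r(t)}-$ (you wrote the functor on the wrong side of the tensor, a purely notational slip) yields a projective $\kk[\Sigma_r]$-module with top $S_\lambda$, hence, by semisimplicity of $\kk[\Sigma_r]$ in characteristic $0$, isomorphic to $S_\lambda$, so that $\pi(e_\lambda)$ is primitive in the class of $\lambda$. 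Your nonvanishing argument ($e_\lambda$ acts nontrivially on the inflated simple, which is killed by $(e_r)$, so $e_\lambda\notin\ker\pi$) is exactly the right check, and the only point you implicitly use --- that $(e_r)P\subseteq\operatorname{rad}P$ for the projective cover $P$, so every nonzero quotient of $P$ still has top $S_\lambda$ --- is standard Nakayama bookkeeping.

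One small caveat: as stated, the proposition asks for a lift of the \emph{already chosen} $z_\lambda$, whereas your argument produces a primitive idempotent $e_\lambda$ whose image is merely conjugate to $z_\lambda$; lifting the conjugating unit through $\pi$ is not automatic for a general ideal. Your fix --- retroactively declaring $z_\lambda:=\pi(e_\lambda)$ --- is legitimate in the context of the paper, since $z_\lambda$ is picked arbitrarily within its conjugacy class and is used only to pin down the block of $e_\lambda$ (conjugate idempotents give isomorphic summands $L_\lambda$ in the Karoubi envelope), but it is worth flagging explicitly, as you did. Your closing remark on transcendental $t$ is also accurate, though for the paper's purposes the general (possibly non-semisimple, $t\in\Z$) case is the one that matters, since \cref{stm:deligne_finite_connection} later specializes $t=n$; your projective-cover argument covers that case, which is precisely why it, rather than the semisimple shortcut, is the right proof.
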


\begin{proof}
  See \cite[Section 3.2]{CH2015}.
\end{proof}

\begin{theorem}
  \label{stm:o_brauer_enum}
  The indecomposable objects of $\drep(G_t)$ are parametrized by the set
  $\dom{P}_\infty$ of all partitions, with the object $L_\lambda = (V^{\tens r},
  e_\lambda)$ corresponding to a partition $\lambda\vdash r$.
  The set $\{\lambda \mid l\geq 0, \lambda \vdash r-2l\}$ of partitions enumerates the
  isomorphism classes of indecomposable objects coming from the object
  $V^{\tens r}$.
\end{theorem}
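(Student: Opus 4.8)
The plan is to mirror the $GL$ argument behind \cref{stm:gl_brauer_enum}, now powered by the recursion in \cref{stm:o_brauer_step} and the lifting in \cref{stm:o_brauer_down}, and to isolate the pairwise non-isomorphism of the $L_\lambda$ as the single genuinely nontrivial ingredient. Since $\drep(G_t)$ is the Karoubi completion of the additive closure of $\drep^{0}(G,\kk_t)$, every indecomposable object is a summand of some $V^{\tens r}$, hence isomorphic to a pair $(V^{\tens r},e)$ for a primitive idempotent $e$ in the endomorphism algebra $B_r(t)$; for a fixed $r$, two such objects are isomorphic precisely when the idempotents are conjugate in $B_r(t)$. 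So the task reduces to classifying primitive idempotents of $B_r(t)$ up to conjugacy, together with the isomorphisms between objects built from different tensor powers.

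I would then run the recursion. A primitive idempotent $e\in B_r(t)$ either has nonzero image in the quotient $B_r(t)/(e_r)\iso\kk[\Sigma_r]$, in which case that image is a primitive idempotent of $\kk[\Sigma_r]$ and hence conjugate to some $z_\lambda$ with $\lambda\vdash r$; \cref{stm:o_brauer_down} lifts $z_\lambda$ to $e_\lambda$ and produces the object $L_\lambda=(V^{\tens r},e_\lambda)$. Otherwise $e$ lies in the two-sided ideal $(e_r)$, so it factors through $e_r$, and the corner isomorphism $e_rB_r(t)e_r\iso B_{r-2}(t)$ together with the diagrams $\psi_r,\hat{\psi}_r$ realizes an isomorphism $(V^{\tens r},e)\iso(V^{\tens r-2},e')$ for a primitive $e'\in B_{r-2}(t)$. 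Iterating this dichotomy, the indecomposables arising from $V^{\tens r}$ are exactly those indexed by partitions of $r,r-2,r-4,\dots$, i.e.\ the set $\{\lambda\mid l\geq 0,\ \lambda\vdash r-2l\}$; letting $r$ range over all nonnegative integers yields one object $L_\lambda$ for every partition $\lambda\in\dom{P}_\infty$. The $Sp$ case is identical, since \cref{stm:o_brauer_step} and \cref{stm:o_brauer_down} are stated uniformly (alternatively it follows from the $O$ case via the near-equivalence relating $\drep(Sp_t)$ and $\drep(O_{-t})$).

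The one fact the idempotent calculus does not deliver by itself is that the $L_\lambda$ are pairwise non-isomorphic, and this is the step I expect to be the main obstacle: the recursion only shows these objects generate, and a priori distinct labels could collapse to the same indecomposable. For partitions of a common size $r$ the point is immediate — the images $z_\lambda$ are pairwise non-conjugate primitive idempotents of $\kk[\Sigma_r]$ (they afford distinct irreducibles of the symmetric group), so the lifts $e_\lambda$ are non-conjugate in $B_r(t)$. The genuine content is separating partitions of different sizes, which is exactly the kind of collapse the recursion cannot rule out on its own. For generic $t$, in particular for the transcendental $t$ relevant to this paper, I would derive it from the semisimplicity of the Brauer algebra $B_r(t)$, whose simple modules are the standard ones indexed by partitions of $r,r-2,r-4,\dots$, forcing the corresponding indecomposables to be distinct; for arbitrary $t\neq 0$ I would instead invoke the standard/cell-module analysis of \cite{CH2015} and \cite{Cox2009}, which establishes the non-isomorphism directly. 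Granting this input, existence, distinctness, and exhaustiveness are all in hand, and the grading statement describing the labels of the summands of $V^{\tens r}$ follows from the recursion, completing the proof.
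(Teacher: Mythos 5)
Your proposal is correct and takes essentially the same route as the paper: its proof likewise treats the theorem as ``merely a rephrasing'' of \cref{stm:o_brauer_step} and \cref{stm:o_brauer_down} --- the quotient-to-$\kk[\Sigma_r]$ / corner-to-$B_{r-2}(t)$ dichotomy for primitive idempotents that you spell out --- and, exactly as you do, it takes the pairwise non-isomorphism of the $L_\lambda$ as an external input, cited to \cite{Cox2009} and \cite{CW2011}. You have simply written out the idempotent bookkeeping that the paper leaves implicit.
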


\begin{proof}
  Given the fact that the objects $L_\lambda$ are not isomorphic for different
  $\lambda$, this is merely a rephrasing of \cref{stm:o_brauer_step} and \cref{stm:o_brauer_down}.
  See \cite[Section 2, p. 277]{Cox2009}, \cite[Theorem 3.4, Theorem 3.5]{CW2011}. 
\end{proof}

When $t$ is not an integer, Deligne's category $\drep(G_t)$ is in fact
abelian and semisimple, see \cite[Theorem 4.8.1]{CW2011} for the case $GL$, and
\cite[Theorem 3.6]{CH2015} for the cases $O$ and $Sp$.

\subsection{Filtrations on $\rep(G_n)$ and Deligne's category}

Let $\filt{P^{+}}{m}\subset P^{+}_{\infty}$ be the subset of all (bi)partitions
$\lambda$ such that $|\lambda|\leq m$.
Also consider a polynomial $\bad_m\in\Z[T]$ which is equal to $\prod_{n=-m}^{m}
(T-n)$ in the case $GL$, and to $\prod_{n=-2m-2}^{2m+2} (T-n)$ in the cases $O$ and
$Sp$. Fix any $m\in\Z_{\geq 0}$ and $n\in I\setminus Z(\bad_m)$.

A type by type inspection gives that the set $\filt{P^{+}}{m}$ naturally embeds to $\dom{P}_n$.
Indeed, in the case $GL$ send a bipartition $[\mu,\nu]\in\filt{P^{+}}{m}$ to the weight
$\sum_i \mu_i\eps_i - \sum_j \nu_j \eps_{n+1-j}\in\dom{P}_n$, and in the
cases $O$ and $Sp$ send a partition $\lambda\in\filt{P^{+}}{m}$ to
$\sum_i \lambda_i\eps_i\in\dom{P}_n$. It is easy to
check that the resulting weights are dominant.

\begin{definition}
Let $\filt{\rep(G_n^0)}{m}$ be a full subcategory of $\rep(G^0_n)$ containing all
the objects $X$ such that every simple subobject of $X$ is isomorphic to
$L_{\lambda}$ for some $\lambda\in\filt{\dom{P}}{m}$.
\end{definition}

In Deligne's category case, we define the filtration as in
\cite[Proposition 9.8]{DeligneSecond} and
\cite[Proposition 10.6]{DeligneSecond}.

\begin{definition}
  For a commutative $\Q[T]$-algebra $R$,
  let $\filt{\drep(G,R)}{m}$ be the full subcategory of objects in $\drep(G,R)$
  coming from the objects $V^{\tens r} \tens (V^*)^{\tens s}\in\drep^0(G,R)$ with $r+s\leq m$.
\end{definition}

Due to \cref{stm:gl_brauer_enum,stm:o_brauer_enum}, in the particular case
$R=\kk_t$ the filtration $\filt{\drep(G_t)}{m}$ is the full additive subcategory
of $\drep(G_t)$ generated by the objects $L_{\lambda}$, $\lambda\in \dom{P}_m$.

It turns out that these filtrations compare nicely.
When $t$ is equal to $n\in I$, the universal property of Deligne's category
implies the existence of a linear tensor functor $F_n\:\drep(G_n)\to \rep(G^0_n)$
that sends $V\in\drep(G_n)$ into $V\in\rep(G^0_n)$. 

\begin{theorem}
  \label{stm:deligne_finite_connection}
  For any $m\in\Z_{\geq 0}$, $n\in I\setminus Z(E_m)$, and $\lambda\in\filt{\dom{P}}{m}$,
  we have $F_n(L_{\lambda}) \iso L_{\lambda}\in\rep(G^0_n)$.
\end{theorem}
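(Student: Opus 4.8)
The plan is to establish the isomorphism $F_n(L_\lambda) \iso L_\lambda$ by tracking the idempotents defining $L_\lambda$ through the functor $F_n$, and showing that the resulting representation of $G_n^0$ is the expected irreducible. Recall that in Deligne's category the object $L_\lambda$ for $\lambda \vdash (r,s)$ (resp. $\lambda \vdash r$) is defined as the image $(V^{\tens r}\tens (V^*)^{\tens s}, e_\lambda)$ of a primitive idempotent $e_\lambda$ in the walled Brauer algebra $B_{r,s}(t)$ (resp. the Brauer algebra $B_r(t)$). Applying the tensor functor $F_n$ carries $V \mapsto V$, $V^* \mapsto V^*$, and hence sends the mixed tensor power to the corresponding object of $\rep(G_n^0)$; the endomorphism $e_\lambda \in B_{r,s}(t)$ maps to an idempotent $F_n(e_\lambda)$ in the endomorphism algebra $\End_{G_n^0}(V^{\tens r}\tens(V^*)^{\tens s})$. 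So $F_n(L_\lambda)$ is the image of this idempotent, and the entire content of the theorem is the identification of that image with the irreducible $L_\lambda$.

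\emph{First} I would set up the specialization map. The walled Brauer algebra $B_{r,s}(T)$ is defined over $\Q[T]$, and there are two relevant specializations: sending $T \mapsto t$ (transcendental) gives the endomorphism algebra in $\drep(G_t)$, while sending $T \mapsto n$ and then composing with the map into $\rep(G_n^0)$ recovers the classical action on mixed tensor powers. The key point is that the primitive idempotent $e_\lambda$ of \cref{stm:gl_brauer_down} (resp. \cref{stm:o_brauer_down}) is constructed from the lifted primitive idempotent $z_\lambda \in \kk[\Sigma_r \times \Sigma_s]$ together with the cellular structure of $B_{r,s}(T)$; because $n \notin Z(\bad_m)$, the relevant denominators (powers of $t$, and the factors $(T-j)$ appearing when inverting $e_{r,s}B_{r,s}e_{r,s}$ via \cref{stm:gl_brauer_step}) do not vanish at $T = n$, so the formula for $e_\lambda$ over $\kk_t$ specializes to a well-defined idempotent at $T = n$.

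\emph{Then} I would identify the image. By classical Schur-Weyl-Brauer duality, for $n$ large relative to $m$ the algebra $B_{r,s}(n)$ surjects onto $\End_{G_n^0}(V^{\tens r}\tens(V^*)^{\tens s})$, and the primitive idempotent $z_\lambda$ (quotient datum) cuts out precisely the $\lambda$-isotypic irreducible constituent $L_\lambda$, while the ideal $(e_{r,s})$ corresponds to contractions landing in lower tensor powers — i.e. the "non-top" constituents. Since $\lambda \in \filt{\dom P}{m}$ is a genuine (bi)partition of the right size and $n \notin Z(\bad_m)$ guarantees we are in the stable range where the classical branching is multiplicity-free at the top layer, the image of $F_n(e_\lambda)$ is exactly the irreducible $G_n^0$-module $L_\lambda$ under the embedding $\filt{P^+}{m} \hookrightarrow \dom P_n$ described above. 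I would run the argument uniformly, noting that the $O$ and $Sp$ cases follow by the same template with $B_r$ in place of $B_{r,s}$ and with the sign twist for $Sp$ tracked through Deligne's construction.

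\emph{The main obstacle} I anticipate is the integrality/specialization step: showing that $e_\lambda$, which a priori is only defined over the localization of $\Q[T]$ away from the vanishing loci of the relevant polynomials, specializes correctly at $T = n$ and that this specialization agrees with the classical idempotent rather than merely an idempotent with the right image modulo $(e_{r,s})$. This requires controlling exactly which factors appear in the denominators — this is precisely why $\bad_m$ is defined with the factors $\prod (T-j)$ over the stated range, and I would verify that the recursive construction of $e_\lambda$ (lifting $z_\lambda$ through the tower of inclusions $B_{r-1,s-1} \hookrightarrow B_{r,s}$ via \cref{stm:gl_brauer_step}) introduces denominators only among these excluded values, so that the hypothesis $n \in I \setminus Z(\bad_m)$ is exactly what makes the specialization valid and the image irreducible.
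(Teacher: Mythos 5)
Your outline is, in substance, a reconstruction of the Schur--Weyl argument that the paper itself delegates to the literature: the paper's proof of \cref{stm:deligne_finite_connection} simply cites \cite[Theorem 5.2.2]{CW2011} for the case $GL$ and \cite[Section 7.1]{CH2015} for the case $Sp$, and for those two cases your idempotent-tracking proposal is a reasonable self-contained version of what those references establish. Your use of the hypothesis $n\in I\setminus Z(\bad_m)$ is also the right mechanism: it keeps $B_{r,s}(n)$ (resp.\ $B_r(n)$) with $r+s\leq m$ in the semisimple, stable range, so that $e_\lambda$ specializes to a primitive idempotent in the correct block and the double-centralizer theorem identifies its image. (Your phrase ``multiplicity-free at the top layer'' is imprecise --- the decomposition is not multiplicity-free --- but the double-centralizer argument makes that clause unnecessary.)

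There is, however, a genuine gap exactly where you assert uniformity: the case $O$. Classical Brauer--Schur--Weyl duality pairs $B_r(n)$ with the \emph{full} orthogonal group $O_n(\kk)$, whereas $F_n$ takes values in $\rep(G_n^0)=\rep(SO_n(\kk))$. Your argument therefore identifies the image of $F_n(e_\lambda)$ as the irreducible $O_n(\kk)$-module $\mathbb{S}_{[\lambda]}V$, which is not yet the statement of the theorem: an irreducible $O_n$-module can fail to remain irreducible upon restriction to $SO_n$ (in type $D$ it splits in two when the last coordinate of the highest weight is nonzero), and moreover $\End_{SO_n}(V^{\tens r})$ is in general strictly larger than the image of the Brauer algebra, because the first fundamental theorem for $SO_n$ contains additional Pfaffian-type invariants; so ``the same template'' does not literally run for $G_n^0$. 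The missing step --- and it is precisely the ``extra care'' the paper's proof takes --- is to observe that for $\lambda\in\filt{\dom{P}}{m}$ and $n\notin Z(\bad_m)$ the length of $\lambda$ is small compared to the rank, hence the last coordinate of the associated highest weight is $0$, and then \cite[Theorem 19.22]{Fulton2004} guarantees that $\mathbb{S}_{[\lambda]}V$ restricts irreducibly to $SO_n(\kk)$ with highest weight $\lambda$. (Alternatively, one can check that in this stable range the extra $SO_n$-invariants occur only in degrees comparable to $n$, so the Brauer image already exhausts $\End_{SO_n}(V^{\tens r})$; either argument closes the gap, and in type $B$ there is no issue since $O_n(\kk)\iso SO_n(\kk)\times\{\pm I\}$.) As written, your proof of the $O$ case is incomplete; with this addition, your route goes through and is more self-contained than the paper's citation-based proof.
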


\begin{proof}
  The case $GL$ is described in \cite[Theorem 5.2.2]{CW2011}, and the case
  $Sp$ is discussed in \cite[Section 7.1]{CH2015}. We need some extra care in
  the case $O$. It is proved in \cite[Proposition 5.1]{CH2015} that the image of
  $L_{\lambda}\in \drep(G_n)$ in $\rep(G_n)$ is isomorphic to
  $\mathbb{S}_{[\lambda]}V$, which is an irreducible $O_n(\kk)$-representation
  defined in \cite[Section 19.5]{Fulton2004}. In our case, the last
  coordinate of the highest weight corresponding to $\lambda$ is zero.
  Therefore, \cite[Theorem 19.22]{Fulton2004} says that $F_n(L_{\lambda})$,
  which is just $\mathbb{S}_{[\lambda]}V$ as an $SO_n(\kk)$-representation, is
  the irreducible representation with the highest weight $\lambda$. Claim follows.
\end{proof}

\subsection{The affine Lie algebra in a tensor category and its BGG category}
\label{sec:affine_bgg}

This subsection recalls some definitions from \cite[Section 6]{EtingofII} that
we need.

Suppose we have a symmetric monoidal rigid category $\cat{C}$, and a Lie algebra
object $\lie{g}\in\cat{C}$ in it such that each object of $\cat{C}$ has a distinguished structure of a
$\lie{g}$-module in a functorial way. 
Let $\triv$ be the tensor identity object in $\cat{C}$.

\begin{definition}
  Let $\cat{C}^a$ be a category of the objects in $\cat{C}$ graded in the
  two-dimensional lattice $\Z\Lambda_0\oplus\Z\delta$, possibly with infinite
  number of gradings.
\end{definition}

Consider the Lie algebra $\lie{l} =
\lie{g}\oplus\triv\oplus\triv'\in \cat{C}^a$ of grading $0$, where $\triv'$
is just a different notation for the trivial object. Define the action of
$\lie{l}$ on an object $X\in\cat{C}^a$ of grading $k\Lambda_0+j\delta$ as follows. The
action of $\lie{g}\subset\lie{l}$ is the one coming from the action of $\lie{g}$
on $X$ as an object of $\cat{C}$. The action of $\triv\subset\lie{l}$ is the
multiplication by $k$, and the action of $\triv'\subset\lie{l}$ is the
multiplication by $j$. This action obviously extends to the functorial action on
$\cat{C}^a$. 

We can define the Killing symmetric bilinear form
$Kil\:\lie{g}\tens\lie{g}\to\triv$, see \cite[Section 6]{EtingofII} for details.
Also denote the trivial object $\triv$ of grading $\delta$ by $z\in\cat{C}^a$.

\begin{definition}
  Define \emph{the affine Lie algebra} $\aff{\lie{g}}$ of $\lie{g}$ to be the
  object $\left(\bigoplus_{i<0} \lie{g}z^i\right) \oplus \lie{l}
  \oplus \left(\bigoplus_{i>0} \lie{g}z^i\right) \in \cat{C}^a$ with the
  following Lie algebra structure.
The inner action of $\lie{l}$ corresponds to the action of $\lie{l}$ on
$\cat{C}^a$, and the commutator between any $\lie{g}z^m$ and $\lie{g}z^n$ is just
the usual commutator $[,]\:\lie{g}z^m\tens\lie{g}z^n\to \lie{g}z^{m+n}$ plus the
$2$-cocycle $m\delta_{m,-n}Kil$ going to $\triv\subset\lie{l}$.
\end{definition}

Note that $\aff{\lie{g}}$ comes with a (parabolic) triangular decomposition
$\aff{\lie{g}} = \lie{u}^{-}\oplus\lie{l}\oplus\lie{u}^{+}$, where
$\lie{u}^{-}$ is equal to $\bigoplus_{i<0} \lie{g}z^{i}$,
and 
$\lie{u}^{+}$ is equal to $\bigoplus_{i>0} \lie{g}z^i$.

Our next goal is to define the parabolic BGG category of $\aff{\lie{g}}$.
First, by a representation of $\aff{\lie{g}}$ we mean an object in
$\cat{C}^a$ with an action of $\aff{\lie{g}}$ compatible with the
action of $\lie{l}\subset\aff{\lie{g}}$.

\begin{definition}
\emph{The parabolic BGG category} $\aff{\cat{O}}$ is the full subcategory of $\aff{\lie{g}}$-modules that are finitely
generated as $\lie{u}^{-}$-modules and locally nilpotent as $\lie{u}^{+}$-modules.
\end{definition}

Denote the additive Grothendieck group of an arbitrary category $\cat{D}$
by $K(\cat{D})$.
Note that $K(\cat{C}^a)$ is isomorphic to the space of set morphisms $\Hom(\Z\Lambda_0\oplus\Z\delta,
K(\cat{C}))$.
By a formal character $\chr M$ of an object $M\in
\aff{\cat{O}}$ we mean the class $[M]\in K(\cat{C}^a)$ of $M$ as an object in $\cat{C}^a$.

Now suppose that $\cat{C}$ is a semisimple abelian category.
Then both $\cat{C}^a$ and $\aff{\cat{O}}$ are abelian.
Denote the set of isomorphism classes of simple
objects in $\cat{C}$ by $\dom{P}$, and denote the simple object
corresponding to $\lambda\in\dom{P}$ by $L_\lambda$. Then the simple objects of
$\cat{C}^a$ are parametrized by $\dom{P}\times\Z\Lambda_0\times\Z\delta$.
Denote the object corresponding to $\phi\in
\dom{P}\times\Z\Lambda_0\times\Z\delta$ by $L_\phi$. 
Consider some examples of the objects in $\aff{\cat{O}}$.

\begin{definition}
For any simple object $L_{\phi}\in\cat{C}^a$,
define \emph{the parabolic Verma module} $M(\phi)$ to be
the tensor product $U(\aff{\lie{g}})\tens_{U(\lie{l}\oplus\lie{u}^{+})} L_{\phi}$,
where $\lie{u}^{+}$ acts on $L_{\phi}$ by $0$.
\end{definition}

\begin{lemma}
  $M(\phi)$ has a unique simple quotient $L(\phi)$.
\end{lemma}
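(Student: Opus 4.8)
The plan is to run the standard highest-weight argument, adapted to the graded tensor-categorical setting. Write $\gamma = k\Lambda_0 + j\delta$ for the grading carried by $\phi$, and recall that $M(\phi) = U(\aff{\lie g}) \tens_{U(\lie l \oplus \lie u^{+})} L_\phi$ is, by a Poincar\'e--Birkhoff--Witt decomposition for the Lie algebra object $\aff{\lie g}$, isomorphic as an object of $\cat C^a$ to $U(\lie u^{-}) \tens L_\phi$. Since $\lie u^{-} = \bigoplus_{i<0}\lie g z^i$ strictly lowers the $\delta$-grading while preserving the level, every graded component of $M(\phi)$ lies in a grading $k\Lambda_0 + j'\delta$ with $j' \le j$, and the component in the top grading $\gamma$ is exactly the generating copy of $L_\phi$. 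In particular this top component is a simple object of $\cat C$, and $M(\phi)$ is generated as an $\aff{\lie g}$-module by it.

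First I would introduce the exact additive functor $(-)_\gamma \colon \cat C^a \to \cat C$ that extracts the graded component in degree $\gamma$; extracting a single graded piece is exact and commutes with arbitrary sums of subobjects. For a submodule $N \subseteq M(\phi)$, which is automatically a graded subobject, applying $(-)_\gamma$ gives a subobject $N_\gamma \subseteq M(\phi)_\gamma$, and $M(\phi)_\gamma \iso L_\phi$ in $\cat C$. Because $\cat C$ is semisimple abelian and $L_\phi$ is simple, $N_\gamma$ is either $0$ or all of $L_\phi$.

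Next I would show that $N_\gamma = L_\phi$ forces $N = M(\phi)$: in that case $N$ contains the entire top component, hence the generating copy of $L_\phi$, and being stable under the $\aff{\lie g}$-action it must contain $U(\aff{\lie g})\cdot L_\phi = M(\phi)$. Thus the proper submodules of $M(\phi)$ are precisely those $N$ with $N_\gamma = 0$. Since $(-)_\gamma$ is additive and preserves sums of subobjects, the sum $N_{\max}$ of all proper submodules again has $(N_{\max})_\gamma = 0$, so $N_{\max}$ is itself a proper submodule, the unique maximal one. I would then set $L(\phi) = M(\phi)/N_{\max}$: it is simple because any proper submodule of the quotient would lift to a submodule strictly between $N_{\max}$ and $M(\phi)$, and it is the unique simple quotient because the kernel of any surjection from $M(\phi)$ onto a simple object is a maximal proper submodule, hence contained in $N_{\max}$ and, being maximal, equal to it.

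The step I expect to require the most care is the categorical bookkeeping underlying the functor $(-)_\gamma$: one must make sure that $\cat C^a$, which allows infinitely many nonzero gradings, is a well-behaved abelian category in which arbitrary sums of subobjects exist and the degree-$\gamma$ functor is exact and commutes with these sums, so that the sum of proper submodules stays proper. This is where the PBW identification $M(\phi) \iso U(\lie u^{-}) \tens L_\phi$ does the real work, guaranteeing both that $M(\phi)_\gamma$ is the simple object $L_\phi$ with multiplicity one and that $M(\phi)$ is generated in degree $\gamma$; the remaining verifications are the formal properties of graded objects over a semisimple base.
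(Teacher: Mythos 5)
Your proof is correct and follows essentially the same route as the paper's: the paper likewise observes that every proper submodule of $M(\phi)$ is supported in gradings strictly below the top degree $k\Lambda_0+j\delta$, so the sum of all proper submodules is still proper and hence is the unique maximal submodule, whose quotient is the unique simple quotient. Your write-up merely makes explicit the ingredients the paper leaves implicit (the PBW identification of $M(\phi)$ with $U(\lie{u}^{-})\tens L_\phi$, the simplicity of the top graded piece, and the exactness of extracting a graded component), which is a sound elaboration rather than a different argument.
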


\begin{proof}
  Let $\phi = \lambda + k\Lambda_0 + j\delta$.
  Any proper subobject of $M(\phi)$ has only gradings $k\Lambda_0 +
  (j-j')\delta$, where $j' > 0$. Therefore, the sum of all proper submodules is
  still a proper submodule, and is the maximal proper submodule. The
  corresponding quotient is a unique simple quotient.
\end{proof}

The first example of this setting is the finite case of
$\rep(G_n^0)$. We consider $\lie{g}'_n$ as a Lie algebra object in $\rep(G_n^0)$.
We indeed have a distinguished $\lie{g}'_n$-module structure on each object of $\rep(G_n^0)$.
Denote the corresponding affine Lie algebra by $\aff{\lie{g}'}_n$. 

The second situation we are interested in is Deligne's category with a non-integer $t$.
Let $\lie{g}_t$ be the Lie algebra object in $\drep(G_t)$ equal to
$V\tens V^*$ in the case $GL$, to $L_{(1,1)}$ in the case $O$,
and to $L_{(2)}$ in the case $Sp$. Denote its commutator by $\lie{g}'_t$, which
only differs from $\lie{g}_t$ in the case $GL$ where it is isomorphic to
$L_{(1),(1)}$. It is also a Lie algebra object in $\drep(G_t)$ such that each
object in $\drep(G_t)$ has a distinguished structure of $\lie{g}'_t$-module.
Denote the corresponding affine Lie algebra by $\aff{\lie{g}'}_t$.

\subsection{Standard facts about roots and Weyl group}
\label{sec:bourbaki_stuff}

This section basically restates some propositions from \cite{Bourbaki46}. Althrough the
propositions we need are proved
only for finite diagrams in Bourbaki's book, the proofs work literally for all
the cases we need. 

For an arbitrary Dynkin diagram $X$,
denote by $\Delta(X)$, $\Delta^{\vee}(X)$, $Q(X)$, $Q^{\vee}(X)$, and $W(X)$ the
root system, the coroot system, the root lattice, the coroot lattice, and
the Weyl group of $X$, resp.
For this subsection, fix a diagram $X$ and let us omit the reference to $X$ in
the notations of these objects.
Pick a subset $\Pi = \{ \alpha_x \in \Delta \mid x\in X \}$ of simple roots.
Denote the reflection in a simple root $\alpha_x$, $x\in X$, by $s_x$.
Let $\Delta^{+}$
(resp. $\Delta^{-}$) be the 
corresponding positive (resp. negative) roots.

\begin{definition}
  Take an element $w\in W$. Denote the set $\Delta^{+}\cap w(\Delta^{-})$ by
  $I(w)$. We will call it \emph{the set of inversion roots of $w$}.
\end{definition}

\begin{lemma}
  \label{stm:full_description_of_negatives}
  For any reduced expression $w = s_{x_1}s_{x_2}\ldots s_{x_j}$, we have $|I(w)| = j$
  and $I(w) = \{ s_{x_1}\ldots s_{x_{i-1}} (\alpha_{x_i}) \mid 1\leq i
  \leq j  \}$.
\end{lemma}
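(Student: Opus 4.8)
The plan is to argue by induction on the length $j$ of the reduced word, peeling off the leftmost simple reflection. Concretely, writing $s = s_{x_1}$ and $w' = s_{x_2}\cdots s_{x_j}$, I will reduce the whole statement to the single recursion
\[
  I(s w') = \{\alpha_{x_1}\} \sqcup s\bigl(I(w')\bigr),
\]
a disjoint union, which simultaneously yields $|I(sw')| = |I(w')| + 1$ and the indexing formula once the inductive description of $I(w')$ is substituted. The base case $j = 0$ is immediate ($I(e) = \Delta^{+}\cap\Delta^{-} = \emptyset$), and $j = 1$ is the statement $I(s_x) = \{\alpha_x\}$.

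The two building blocks I would invoke are standard facts about simple reflections, valid verbatim in all our diagrams (cf. \cite{Bourbaki46}): first, that $s_x$ fixes $\Delta^{+}\setminus\{\alpha_x\}$ setwise and sends $\alpha_x$ to $-\alpha_x$; and second, the length criterion that $\ell(s_x w) = \ell(w) + 1$ if and only if $w^{-1}(\alpha_x)\in\Delta^{+}$. Since the word $w = s_{x_1}\cdots s_{x_j}$ is reduced, we have $\ell(s w') = \ell(w') + 1$, so the criterion applies with $w'$ in place of $w$, giving $(w')^{-1}(\alpha_{x_1})\in\Delta^{+}$, equivalently $\alpha_{x_1}\notin I(w')$.

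To establish the recursion I would unwind the definition: $\gamma\in I(sw')$ means $\gamma\in\Delta^{+}$ and $(sw')^{-1}\gamma = (w')^{-1}s\gamma\in\Delta^{-}$. Splitting on whether $\gamma = \alpha_{x_1}$: for $\gamma = \alpha_{x_1}$ one computes $(w')^{-1}s\alpha_{x_1} = -(w')^{-1}(\alpha_{x_1})\in\Delta^{-}$ by the length criterion, so $\alpha_{x_1}\in I(sw')$; for $\gamma\in\Delta^{+}\setminus\{\alpha_{x_1}\}$ one has $s\gamma\in\Delta^{+}\setminus\{\alpha_{x_1}\}$ by the first building block, and then $\gamma\in I(sw')$ is equivalent to $s\gamma\in I(w')$, i.e.\ $\gamma\in s(I(w'))$. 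This gives the two inclusions. Disjointness is where reducedness is essential: because $\alpha_{x_1}\notin I(w')$, applying $s$ keeps $s(I(w'))\subseteq\Delta^{+}\setminus\{\alpha_{x_1}\}$, so $\alpha_{x_1}$ is genuinely a new element and no collapse occurs, yielding exactly $|I(sw')| = |I(w')| + 1$. Feeding in the inductive formula $I(w') = \{\,s_{x_2}\cdots s_{x_{i-1}}(\alpha_{x_i}) \mid 2\leq i\leq j\,\}$ and prepending $\alpha_{x_1}$ (the $i = 1$ term, with empty product the identity) completes the induction.

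The main obstacle, and the only place the hypotheses do real work, is precisely the disjointness count: one must verify that reducedness forces $\alpha_{x_1}\notin s(I(w'))$ so that $|I(w)| = j$ rather than something smaller. Everything else is bookkeeping with the two standard facts; the rest of the care is simply in keeping the convention $I(w) = \Delta^{+}\cap w(\Delta^{-})$ consistent throughout, so that the length criterion is applied to $(w')^{-1}(\alpha_{x_1})$ with the correct sign.
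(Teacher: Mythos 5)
Your proof is correct and is essentially the argument the paper relies on: the paper states this lemma without an inline proof, deferring to Bourbaki, and your induction peeling off the leftmost simple reflection via the recursion $I(s_{x_1}w') = \{\alpha_{x_1}\}\sqcup s_{x_1}(I(w'))$ --- with reducedness entering exactly once, through the length criterion $(w')^{-1}(\alpha_{x_1})\in\Delta^{+}$, to guarantee disjointness and hence $|I(w)|=j$ --- is precisely the standard proof that citation points to, and it indeed works verbatim for the infinite diagrams used here. No gaps; nothing to add.
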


As a simple corollary, we have the following lemma.

\begin{lemma}
  \label{stm:simple_reflection_permutes_almost}
  Any simple reflection $s_x\in W$ permutes
  the set $\Delta^{+}\setminus \{\alpha_x\}$.
\end{lemma}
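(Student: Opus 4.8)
The plan is to obtain this as an immediate consequence of \cref{stm:full_description_of_negatives}, specialized to the length-one element $w = s_x$, exactly as the phrase ``simple corollary'' suggests. First I would observe that $s_x$ is its own reduced expression, so applying \cref{stm:full_description_of_negatives} with $j = 1$ gives $|I(s_x)| = 1$ together with $I(s_x) = \{\alpha_x\}$, since the only index $i = 1$ produces the root $\alpha_{x_1} = \alpha_x$. Unwinding the definition $I(s_x) = \Delta^{+}\cap s_x(\Delta^{-})$ and using $s_x = s_x^{-1}$, this says precisely that for a positive root $\beta$ one has $s_x(\beta)\in\Delta^{-}$ if and only if $\beta = \alpha_x$.

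From here the argument is short. For $\beta\in\Delta^{+}\setminus\{\alpha_x\}$ the above equivalence forces $s_x(\beta)\notin\Delta^{-}$, hence $s_x(\beta)\in\Delta^{+}$ because $s_x\in W$ preserves $\Delta = \Delta^{+}\sqcup\Delta^{-}$. Moreover $s_x(\beta)\neq\alpha_x$, for otherwise $\beta = s_x(\alpha_x) = -\alpha_x$, contradicting $\beta\in\Delta^{+}$. Thus $s_x$ maps $\Delta^{+}\setminus\{\alpha_x\}$ into itself. Since $s_x$ is an involution, writing $S = \Delta^{+}\setminus\{\alpha_x\}$ and combining $s_x(S)\subseteq S$ with $s_x^2 = \mathrm{id}$ gives $S = s_x^2(S)\subseteq s_x(S)\subseteq S$, so $s_x(S) = S$; as $s_x$ is injective this restriction is a bijection, i.e. a permutation of $S$.

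If one prefers a self-contained argument not routing through inversion sets, I would instead expand $\beta = \sum_{y\in X} c_y\alpha_y$ with all $c_y\geq 0$ and use $s_x(\beta) = \beta - \langle\beta,\alpha_x^{\vee}\rangle\,\alpha_x$, which alters only the coefficient of $\alpha_x$. Since $\beta\neq\alpha_x$ is positive there is some $c_y > 0$ with $y\neq x$, and this coefficient is unchanged in $s_x(\beta)$; as every root has all coefficients of one sign, $s_x(\beta)$ must be positive, and bijectivity then follows from the same involution trick.

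The one point that requires care, and which I regard as the only real obstacle, is that in the affine setting $\Delta^{+}$ may be infinite, so I cannot simply invoke ``an injection of a finite set into itself is a bijection''. The involution argument ($s_x^2 = \mathrm{id}$ together with $s_x(S)\subseteq S$) is exactly what handles this uniformly, and I would phrase it so that it applies verbatim to every Dynkin diagram $X$ under consideration, in keeping with the remark opening \cref{sec:bourbaki_stuff} that Bourbaki's finite-case proofs carry over unchanged to all our cases.
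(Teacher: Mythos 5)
Your proof is correct and takes essentially the same route the paper intends: the lemma appears there as ``a simple corollary'' of \cref{stm:full_description_of_negatives}, and your specialization to $w = s_x$, yielding $I(s_x) = \{\alpha_x\}$ and hence $s_x(\beta)\in\Delta^{+}$ for all $\beta\in\Delta^{+}\setminus\{\alpha_x\}$, is precisely that derivation made explicit. Your involution argument for surjectivity is a welcome bit of extra care, since $\Delta^{+}$ may be infinite in the affine and limit cases the paper works with.
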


Suppose we have a weight lattice $P$ of $X$, i.e. a $\Z$-module $P$
with a $\Z$-linear embedding $Q\to P$ and a $\Z$-linear evaluation map
$P\tens_\Z Q^{\vee}\to \Z$ such that the composite pairing $Q\tens_{\Z} Q^{\vee}
\to \Z$ of
$Q$ and $Q^{\vee}$ is the default one.
Note that the Weyl group $W$ indeed acts on any weight lattice
$P$.
Denote the set of dominant weights, i.\,e. the elements of $P$ where each
simple coroot evaluates non-negatively, by $\dom{P}$.

\begin{lemma}
  \label{stm:weyl_take_dominant_lower}
  For any dominant weight $\phi\in\dom{P}$ and Weyl group element $w\in W$, the difference
  $\phi-w\phi$ can be expressed as $\sum_{x\in X} k_x\alpha_x$, where $k_x\geq
  0$ for all $x\in X$.
\end{lemma}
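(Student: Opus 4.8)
The plan is to induct on the length $\ell(w)$, i.e.\ the number of factors in a reduced expression for $w$. For the base case $w = e$ the difference $\phi - w\phi$ vanishes, so it is the trivial sum with all $k_x = 0 \geq 0$.

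For the inductive step I would fix a reduced expression $w = s_{x_1}\cdots s_{x_j}$ with $j = \ell(w) \geq 1$, and peel off the \emph{last} reflection: set $v = s_{x_1}\cdots s_{x_{j-1}}$ and $x = x_j$, so that $w = v s_x$ and $\ell(v) = j-1$. Using the reflection formula $s_x\phi = \phi - \langle \phi, \alpha_x^{\vee}\rangle\alpha_x$ (with $\langle\cdot,\cdot\rangle$ the pairing $P\tens_\Z Q^{\vee}\to\Z$ and $\alpha_x^{\vee}$ the simple coroot) and then applying $v$, I would rewrite the difference as
\[
  \phi - w\phi = (\phi - v\phi) + \langle \phi, \alpha_x^{\vee}\rangle\, v(\alpha_x).
\]

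Now I would treat the two summands separately. By the inductive hypothesis $\phi - v\phi$ lies in the nonnegative cone $\sum_{x\in X}\Z_{\geq 0}\alpha_x$. For the second summand, dominance of $\phi$ gives $\langle \phi, \alpha_x^{\vee}\rangle \geq 0$, so it remains only to see that $v(\alpha_x)$ is a \emph{positive} root. This is exactly where I invoke \cref{stm:full_description_of_negatives}: since $w = v s_x$ is the chosen reduced word, its $i=j$ inversion root is $s_{x_1}\cdots s_{x_{j-1}}(\alpha_{x_j}) = v(\alpha_x)$, and every element of $I(w) = \Delta^{+}\cap w(\Delta^{-})$ is by definition positive, whence $v(\alpha_x)\in\Delta^{+}$. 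Expanding $v(\alpha_x) = \sum_{x} m_x\alpha_x$ with all $m_x \geq 0$ shows the second summand is a nonnegative scalar times a nonnegative combination of simple roots, hence also lies in the cone. Adding the two contributions closes the induction.

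The one point requiring care—and the reason for factoring off a simple reflection on the right rather than on the left—is precisely the sign of $v(\alpha_x)$. The left factorization $w = s_x v'$ would instead produce a term $\langle v'\phi, \alpha_x^{\vee}\rangle\alpha_x$, whose coefficient need not be nonnegative, so that argument would stall. Choosing the right descent turns the relevant root positive automatically through the inversion-set description, and this is the main (indeed essentially the only) obstacle in the proof.
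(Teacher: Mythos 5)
Your proof is correct: the induction on $\ell(w)$, peeling off a right descent $w = vs_x$ and using the inversion-set description from \cref{stm:full_description_of_negatives} to see that $v(\alpha_x)\in\Delta^{+}$, is exactly the standard Bourbaki argument that the paper invokes by citation rather than proving (this lemma is stated in \cref{sec:bourbaki_stuff} without proof, with a blanket reference to Bourbaki and the remark that the proofs carry over verbatim to the infinite diagrams needed here — which your argument does, since it only uses the finite length of $w$). Your closing observation about why the right factorization, not the left, is the one that works is also accurate.
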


On $P$, we can also define the dot action by setting
$s_x \cdot \phi = \phi - (\langle \phi, \alpha_x^{\vee}\rangle +
1)\alpha_x$, $\phi\in P$, $x\in X$. It is easy to check that this is
indeed an action of $W$ on $P$. 

\begin{lemma}
  \label{stm:dot_action_by_negatives_sum}
  Given any $w\in W$, we have
  $$
  w\cdot 0 = -\sum_{\alpha\in I(w)} \alpha.
  $$
\end{lemma}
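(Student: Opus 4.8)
The plan is to reduce the dot action of a single simple reflection to the ordinary linear reflection and then induct on the length $l(w)$. The key observation is that the defining formula for the dot action rewrites, for every $\phi\in P$ and every $x\in X$, as
$$
  s_x\cdot\phi = \phi - \langle\phi,\alpha_x^{\vee}\rangle\alpha_x - \alpha_x = s_x(\phi) - \alpha_x,
$$
because $s_x(\phi) = \phi - \langle\phi,\alpha_x^{\vee}\rangle\alpha_x$ is exactly the linear reflection. Since the dot action is a genuine action of $W$, this one-reflection identity is all that is needed to propagate along a reduced word.

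First I would fix a reduced expression $w = s_{x_1}s_{x_2}\cdots s_{x_j}$ and set $w' = s_{x_2}\cdots s_{x_j}$, so that $w = s_{x_1}w'$ and $l(w') = j-1$. The base case $w = 1$ is immediate, since $1\cdot 0 = 0$ and $I(1) = \emptyset$. For the inductive step, the group-action property together with the identity above gives
$$
  w\cdot 0 = s_{x_1}\cdot(w'\cdot 0) = s_{x_1}(w'\cdot 0) - \alpha_{x_1}.
$$
Applying the induction hypothesis $w'\cdot 0 = -\sum_{\beta\in I(w')}\beta$ turns the right-hand side into $-\,s_{x_1}\!\bigl(\sum_{\beta\in I(w')}\beta\bigr) - \alpha_{x_1}$.

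It then remains to match this against $-\sum_{\alpha\in I(w)}\alpha$, and here I would invoke \cref{stm:full_description_of_negatives}. Isolating the $i=1$ term in its description of $I(w)$ produces the decomposition $I(w) = \{\alpha_{x_1}\}\cup s_{x_1}(I(w'))$, and the count $|I(w)| = j = 1 + |I(w')|$ from the same lemma shows this union is disjoint. Summing over it gives $\sum_{\alpha\in I(w)}\alpha = \alpha_{x_1} + s_{x_1}\!\bigl(\sum_{\beta\in I(w')}\beta\bigr)$, which is precisely the negative of the expression just computed, closing the induction.

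The argument is short and I do not expect a serious obstacle: the only delicate points are the rewriting $s_x\cdot\phi = s_x(\phi) - \alpha_x$ and the bookkeeping of the inversion-set decomposition, both controlled by \cref{stm:full_description_of_negatives}. It is perhaps worth remarking that the identity being proved is the classical $w\cdot 0 = w(\rho) - \rho$, but the inductive route deliberately avoids any mention of $\rho$, which need not belong to the weight lattice $P$ under consideration.
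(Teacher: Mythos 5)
Your proof is correct. The paper itself offers no argument for this lemma: it sits in the subsection that ``basically restates some propositions from Bourbaki,'' with only the blanket remark that Bourbaki's finite-type proofs carry over verbatim to the diagrams needed here. So your write-up is, in effect, the proof the paper delegates to the literature, and it is carried out cleanly: the rewriting $s_x\cdot\phi = s_x(\phi)-\alpha_x$ follows at once from the paper's definition of the dot action, the group-action property you invoke is exactly what the paper asserts (``it is easy to check that this is indeed an action of $W$ on $P$''), and the disjoint decomposition $I(w)=\{\alpha_{x_1}\}\cup s_{x_1}(I(w'))$ is legitimately extracted from \cref{stm:full_description_of_negatives} applied to $w$ and to the (automatically reduced, since $l(w')=j-1$) suffix $w'$, with the cardinality count guaranteeing disjointness. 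Your closing remark is the most valuable part of the comparison: the textbook route via $w\cdot 0 = w(\rho)-\rho$ presupposes an element $\rho$ pairing to $1$ with every simple coroot, which need not exist in the weight lattice $P$ and is genuinely problematic for the infinite diagrams $A_\infty$, $C_\infty$, $D_\infty$ on which the paper later relies; your induction avoids $\rho$ entirely and therefore substantiates, rather than merely asserts, the paper's claim that the argument works ``literally for all the cases we need.''
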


\begin{lemma}
  \label{stm:dot_action_on_zero_is_unique}
  The dot action of an element $w\in W$ on $0$ determines $w$ uniquely.
\end{lemma}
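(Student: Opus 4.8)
The plan is to show that the map $w\mapsto w\cdot 0$ from $W$ to $P$ is injective. The main tool will be the auxiliary identity
\[
  w\cdot\phi = w\phi + (w\cdot 0)\qquad\text{for all }w\in W,\ \phi\in P,
\]
which says that the dot action is the linear action composed with translation by the vector $w\cdot 0$. I would prove this by induction on the length of $w$: for a simple reflection it reads $s_x\cdot\phi = \phi - (\langle\phi,\alpha_x^\vee\rangle+1)\alpha_x = s_x\phi - \alpha_x$, and $s_x\cdot 0 = -\alpha_x$, so it holds; the inductive step follows from the fact that the dot action is a genuine action of $W$ (noted just before \cref{stm:dot_action_by_negatives_sum}), writing $(s_xw)\cdot\phi = s_x\cdot(w\cdot\phi)$ and expanding both sides with the identity already known for $s_x$ and for $w$.

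Granting this identity, suppose $w\cdot 0 = w'\cdot 0$ and set $u = (w')^{-1}w$. Using that the dot action is an action, I would compute
\[
  u\cdot 0 = (w')^{-1}\cdot(w\cdot 0) = (w')^{-1}(w\cdot 0) + (w')^{-1}\cdot 0,
\]
where the second equality is the auxiliary identity applied to $(w')^{-1}$ and $\phi = w\cdot 0$. Substituting $w\cdot 0 = w'\cdot 0$ and running the same identity backwards gives $(w')^{-1}(w'\cdot 0) + (w')^{-1}\cdot 0 = (w')^{-1}\cdot(w'\cdot 0) = ((w')^{-1}w')\cdot 0 = 0$. Thus $u\cdot 0 = 0$, and it remains only to show that $u\cdot 0 = 0$ forces $u = e$.

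For that last step I would invoke \cref{stm:dot_action_by_negatives_sum} to write $0 = u\cdot 0 = -\sum_{\alpha\in I(u)}\alpha$. Every element of $I(u)\subseteq\Delta^{+}$ is a nonnegative integer combination of the simple roots $\alpha_x$ with at least one positive coefficient, and the simple roots are linearly independent; hence a nonempty sum of positive roots cannot vanish. Therefore $I(u) = \emptyset$, and by \cref{stm:full_description_of_negatives} this means the length of $u$ is $|I(u)| = 0$, so $u = e$ and $w = w'$. The only genuinely non-formal ingredient is the auxiliary identity $w\cdot\phi = w\phi + (w\cdot 0)$, and I expect its inductive verification (keeping careful track of how $s_x$ interacts with the linear and translation parts) to be the main point requiring attention; everything else is a direct consequence of the two cited lemmas and the linear independence of simple roots.
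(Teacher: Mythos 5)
Your proof is correct. Note that the paper itself gives no argument for this lemma: it sits in the ``standard facts'' subsection, which delegates to \cite{Bourbaki46} with the remark that the finite-type proofs carry over verbatim. The classical proof behind that citation writes $w\cdot 0 = w\rho-\rho$ for a weight $\rho$ with $\langle\rho,\alpha_x^{\vee}\rangle=1$ for all $x$, and concludes from the fact that the stabilizer of the regular weight $\rho$ is trivial. Your argument replaces this with the cocycle identity $w\cdot\phi = w\phi + (w\cdot 0)$, verified on generators, combined with \cref{stm:dot_action_by_negatives_sum}, \cref{stm:full_description_of_negatives}, and the linear independence of simple roots. That is a genuinely different route, and in the present setting it buys something real: for the infinite diagrams of \cref{sec:limit_dynkin} no such $\rho$ exists inside the weight lattice (weights have finitely many nonzero coordinates), which is presumably why the paper defines the dot action generator-wise in the first place; your $\rho$-free proof is exactly the kind of argument that justifies the paper's claim that the Bourbaki statements survive in the infinite case.

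Two minor simplifications, both cosmetic. First, your induction on length is more than needed: the set of $w$ satisfying the identity contains all $s_x$ and is closed under products, since if it holds for $w_1$ and $w_2$ then
\[
  (w_1w_2)\cdot\phi = w_1\cdot(w_2\cdot\phi) = w_1w_2\phi + w_1(w_2\cdot 0) + w_1\cdot 0 = w_1w_2\phi + (w_1w_2)\cdot 0,
\]
so no reduced expressions or length bookkeeping are required. Second, the final step is airtight as stated: from $u\cdot 0 = 0$ and \cref{stm:dot_action_by_negatives_sum} the sum over $I(u)$ of positive roots vanishes, linear independence of the simple roots forces $I(u)=\emptyset$, and $l(u)=|I(u)|=0$ by \cref{stm:full_description_of_negatives} gives $u=e$.
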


Let $X$ be a subdiagram of some other Dynkin diagram $\aff{X}$.
We mark all the objects corresponding to $\aff{X}$ with a hat, for example,
denote the set of roots by $\aff{\Delta}$.
We do not require $\aff{X}$ to be the affinization of $X$.

\begin{definition}
  Say that a Weyl group
  element $w\in \aff{W}$ is \emph{$X$-reduced} if the set $I(w)$
  of the inversion roots of $w$ does not intersect with $\Delta$. Denote the subset of $X$-reduced
  elements by $\wx\subset \aff{W}$.
\end{definition}

\begin{definition}
  Say that a weight $\phi\in\aff{P}$ is \emph{$X$-dominant} if $\langle \phi,
  \alpha^{\vee}_x\rangle \geq 0$ for any $x\in X$. Denote the set of
  $X$-dominant weights by $\xdom{P}$.
\end{definition}

\begin{lemma}
  \label{stm:wx_gives_x_dominant}
  For any dominant $\phi\in\dom{\aff{P}}$ and $X$-reduced $w\in\wx$, the weight
  $w\cdot\phi$ is $X$-dominant.
\end{lemma}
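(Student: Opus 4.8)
\section*{Proof proposal}

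The plan is to convert the combinatorial hypothesis of $X$-reducedness into a positivity statement about $w^{-1}$ applied to the simple roots indexed by $X$, and then to feed that into the $\rho$-shifted description of the dot action. First I would extract from the definition of $\wx$ the fact that $w^{-1}(\alpha_x)\in\aff{\Delta}^{+}$ for every $x\in X$. Indeed, $\alpha_x$ is a positive root of $\aff{X}$, so $\alpha_x\in I(w) = \aff{\Delta}^{+}\cap w(\aff{\Delta}^{-})$ would hold precisely when $w^{-1}(\alpha_x)\in\aff{\Delta}^{-}$. Since $w$ is $X$-reduced, $I(w)$ avoids $\Delta$, and in particular $\alpha_x\notin I(w)$; as $\alpha_x\neq 0$ this forces $w^{-1}(\alpha_x)\in\aff{\Delta}^{+}$. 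Because $w^{-1}$ commutes with the coroot map and preserves real roots, $w^{-1}(\alpha_x^{\vee}) = (w^{-1}\alpha_x)^{\vee}$ is a positive real coroot, hence a nonzero nonnegative integer combination $\sum_{y} c_y\,\alpha_y^{\vee}$ of the simple coroots of $\aff{X}$.

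Next I would rewrite the dot action in the shifted form $w\cdot\phi = w(\phi+\rho)-\rho$, where $\rho\in\aff{P}$ is any weight with $\langle\rho,\alpha_y^{\vee}\rangle = 1$ for all $y\in\aff{X}$; that this agrees with the recursive definition $s_x\cdot\phi = \phi - (\langle\phi,\alpha_x^{\vee}\rangle+1)\alpha_x$ is checked on the generators $s_x$ and then propagated, since both expressions are compatible with composition in $\aff{W}$. Pairing $w\cdot\phi$ against $\alpha_x^{\vee}$ for $x\in X$ gives
\begin{equation*}
  \langle w\cdot\phi,\alpha_x^{\vee}\rangle = \langle\phi+\rho,\,w^{-1}(\alpha_x^{\vee})\rangle - 1.
\end{equation*}
Using the expansion $w^{-1}(\alpha_x^{\vee}) = \sum_y c_y\,\alpha_y^{\vee}$ from the previous step, dominance of $\phi$ gives $\langle\phi,w^{-1}(\alpha_x^{\vee})\rangle = \sum_y c_y\langle\phi,\alpha_y^{\vee}\rangle\geq 0$, while $\langle\rho,w^{-1}(\alpha_x^{\vee})\rangle = \sum_y c_y\geq 1$ because the $c_y$ are nonnegative integers, not all zero. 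Adding these and subtracting $1$ yields $\langle w\cdot\phi,\alpha_x^{\vee}\rangle\geq 0$ for every $x\in X$, which is exactly the $X$-dominance of $w\cdot\phi$.

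I expect the only delicate point to be justifying the properties of $\rho$ in the affine setting: one must confirm that a weight pairing to $1$ with every simple coroot exists in $\aff{P}$, and that $w^{-1}(\alpha_x)$ is a positive \emph{real} root whose coroot expands with nonnegative integer coefficients (the latter being a standard fact for symmetrizable root systems, applicable here since $\alpha_x$ is a real simple root and $\aff{W}$ preserves real roots). An alternative that keeps the argument closer to the objects already introduced is to write $w\cdot\phi = w\phi + (w\cdot 0)$ and invoke \cref{stm:dot_action_by_negatives_sum} to replace $w\cdot 0$ by $-\sum_{\alpha\in I(w)}\alpha$. Then $\langle w\phi,\alpha_x^{\vee}\rangle = \langle\phi,w^{-1}(\alpha_x^{\vee})\rangle\geq 0$ by the positivity of $w^{-1}(\alpha_x^{\vee})$, and after identifying $\sum_{\alpha\in I(w)}\alpha$ with $\rho-w\rho$ one sees that $-\sum_{\alpha\in I(w)}\langle\alpha,\alpha_x^{\vee}\rangle = \langle\rho,w^{-1}(\alpha_x^{\vee})\rangle - 1\geq 0$ as well; this is the same computation repackaged, so the role of $\rho$ is unavoidable but harmless.
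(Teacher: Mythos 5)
Your argument is correct in substance, and it is worth noting what it is being compared against: the paper never proves \cref{stm:wx_gives_x_dominant} at all — the lemma sits in the subsection that explicitly just restates propositions from Bourbaki, so the paper's ``proof'' is a citation, and what you wrote is the standard self-contained argument that the citation points to. Your pivot is exactly the classical characterization of minimal coset representatives: $w\in\wx$ forces $\alpha_x\notin I(w)$ for $x\in X$, hence $w^{-1}(\alpha_x)\in\aff{\Delta}^{+}$, hence $w^{-1}(\alpha_x^{\vee})$ is a positive coroot, i.e.\ a nonzero nonnegative integral combination $\sum_y c_y\alpha_y^{\vee}$; and the pairing computation $\langle w\cdot\phi,\alpha_x^{\vee}\rangle = \langle\phi,w^{-1}\alpha_x^{\vee}\rangle + \langle\rho,w^{-1}\alpha_x^{\vee}\rangle - 1 \geq 0 + 1 - 1 = 0$ is right. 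The only genuine input beyond the definitions is that positive real roots have positive coroots with nonnegative integer coefficients, which holds in all the (classical affine and $A_{\infty}$, $C_{\infty}$, $D_{\infty}$) diagrams the paper uses and can be read off its explicit root data.

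The delicate point you flag is a real issue, and your ``alternative'' does not actually evade it, since identifying $\sum_{\alpha\in I(w)}\alpha$ with $\rho-w\rho$ presupposes $\rho$ just as much. For the finite affine lattices $\aff{P}_n$ a suitable $\rho$ does exist, but the lemma is also applied with $\aff{X}_{\infty}$, and there no $\rho\in\aff{P}_{\infty}$ exists: in the case $GL$, elements of $\aff{P}_{\infty}$ have finitely many nonzero coefficients $\beta_i$ of the $\eps_i$, while $\langle\rho,\alpha_i^{\vee}\rangle = 1$ for all $i\neq 0$ would force the infinite staircase $\beta_i-\beta_{i+1}=1$. The repair is cheap and worth making explicit. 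Since the simple coroots form a basis of $\aff{Q}^{\vee}$, define $\rho$ as the element of $\Hom(\aff{Q}^{\vee},\Z)$ with $\rho(\alpha_y^{\vee})=1$ for all $y\in\aff{X}$. The evaluation pairing gives a map $\aff{P}\to\Hom(\aff{Q}^{\vee},\Z)$ which is $W$-equivariant for the transpose action $(wf)(\beta^{\vee})=f(w^{-1}\beta^{\vee})$ (check on the generators $s_x$), and your generator-level verification that $s_x\cdot\phi = s_x(\phi+\rho)-\rho$ is valid verbatim in this enlarged $W$-module. Since both the hypothesis (dominance of $\phi$) and the conclusion ($X$-dominance of $w\cdot\phi$) are statements purely about values on coroots, the entire computation can be run in $\Hom(\aff{Q}^{\vee},\Z)$ and the conclusion descends to $\aff{P}$. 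With that adjustment — or equivalently with an induction on $l(w)$ that replaces $\rho$ by the coefficient-sum functional on $\aff{Q}^{\vee}$ — your proof is complete, and it has the merit of verifying directly the ``works literally for infinite diagrams'' claim that the paper leaves implicit.
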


\begin{lemma}
  \label{stm:wx_wx_gives_all}
  The map $W\times \wx \to \aff{W},$
  $$
  (w, w')\in W\times \wx \mapsto ww'\in \aff{W}
  $$
  is a bijection.
\end{lemma}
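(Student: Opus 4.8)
The plan is to identify $\wx$ with the set of minimal-length representatives of the right cosets $W\backslash\aff{W}$ and then exploit additivity of length under the product $ww'$. Throughout write $\ell(w)=|I(w)|$, which equals the number of letters in a reduced expression for $w$ by \cref{stm:full_description_of_negatives}, and recall $I(w)=\{\gamma\in\aff{\Delta}^{+}\mid w^{-1}\gamma\in\aff{\Delta}^{-}\}$. I would first record three facts. (i) For $w\in W$ the set $I(w)$ consists of roots of the subsystem $\Delta$: this is immediate from the explicit description $I(w)=\{s_{x_1}\cdots s_{x_{i-1}}(\alpha_{x_i})\}$ of \cref{stm:full_description_of_negatives}, since every $x_i\in X$. (ii) For $w\in W$ and $\beta\in\aff{\Delta}^{+}\setminus\Delta$ one has $w\beta\in\aff{\Delta}^{+}\setminus\Delta$. (iii) For $w'\in\wx$ one has $(w')^{-1}\alpha\in\aff{\Delta}^{+}$ for every $\alpha\in\Delta^{+}$; indeed $I(w')\cap\Delta=\emptyset$ says exactly that no $\alpha\in\Delta^{+}$ satisfies $(w')^{-1}\alpha<0$.

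Fact (ii) is where the genuine content lies, and I expect it to be the main obstacle. To prove it I would reduce to a single simple reflection $s_x$, $x\in X$, and induct on $\ell(w)$. A root $\beta\in\aff{\Delta}^{+}\setminus\Delta$ satisfies $\beta\neq\alpha_x$, so the hatted version of \cref{stm:simple_reflection_permutes_almost}, applied to the diagram $\aff{X}$, gives $s_x\beta\in\aff{\Delta}^{+}$. To see $s_x\beta\notin\Delta$, note that $s_x$ only alters the coefficient of $\alpha_x$ in the simple-root expansion of a root; hence $\beta$ and $s_x\beta$ carry the same (not all zero) coefficients on the simple roots indexed by $\aff{X}\setminus X$, so neither lies in $\Delta$. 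This positivity-preservation of $W$ on roots outside its own subsystem is the key geometric input that makes the decomposition below disjoint.

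The central computation is the length-additivity formula: for $w\in W$ and $w'\in\wx$,
\[
  I(ww') = I(w)\ \sqcup\ w\big(I(w')\big),\qquad\text{hence}\qquad \ell(ww')=\ell(w)+\ell(w').
\]
I would prove this by splitting $\gamma\in\aff{\Delta}^{+}$ according to the sign of $w^{-1}\gamma$. If $\gamma\in I(w)$, then $\gamma\in\Delta^{+}$ by (i), and (iii) forces $(w')^{-1}(w^{-1}\gamma)<0$, so $\gamma\in I(ww')$. If $\gamma\notin I(w)$, then $\gamma\in I(ww')$ exactly when $w^{-1}\gamma\in I(w')$; writing $\gamma=w\beta$ with $\beta=w^{-1}\gamma\in I(w')\subseteq\aff{\Delta}^{+}\setminus\Delta$, fact (ii) guarantees $\gamma\in\aff{\Delta}^{+}\setminus\Delta$, so these contributions are genuine positive roots, all lying outside $\Delta$ and hence disjoint from $I(w)$. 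Counting yields the length formula.

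With additivity in hand, both halves of the bijection become formal. For surjectivity, given $g\in\aff{W}$ I would choose $w'$ of minimal length in the coset $Wg$; minimality gives $\ell(s_xw')\geq\ell(w')$, forcing $(w')^{-1}\alpha_x>0$ for every $x\in X$, and expanding an arbitrary $\alpha\in\Delta^{+}$ as a nonnegative combination of the $\alpha_x$ shows $(w')^{-1}\alpha>0$, i.e. $w'\in\wx$; then $g=(g(w')^{-1})\,w'$ with $g(w')^{-1}\in W$. For injectivity, if $w_1w_1'=w_2w_2'$, set $u=w_2^{-1}w_1\in W$, so $w_2'=uw_1'$ and $w_1'=u^{-1}w_2'$; applying the additivity formula to both products gives $\ell(w_2')=\ell(u)+\ell(w_1')$ and $\ell(w_1')=\ell(u)+\ell(w_2')$, whence $\ell(u)=0$, so $u=e$ and $w_1=w_2$, $w_1'=w_2'$.
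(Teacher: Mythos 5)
Your proof is correct. The paper itself never spells out an argument for this lemma: it is one of the facts in the subsection on standard Weyl group properties, quoted from Bourbaki with the remark that the finite-type proofs go through verbatim for arbitrary diagrams — and the standard proof being invoked there is precisely the one you give, namely the decomposition $I(ww') = I(w)\sqcup w\bigl(I(w')\bigr)$, the resulting length additivity $l(ww')=l(w)+l(w')$, and minimal-length representatives of the cosets $W\backslash\aff{W}$. So your writeup in effect supplies the verification the paper delegates to the reference, and it correctly isolates the one genuinely geometric input (your fact (ii), that $W$ preserves $\aff{\Delta}^{+}\setminus\Delta$, proved by the observation that $s_x$, $x\in X$, only alters the $\alpha_x$-coefficient) which makes the union disjoint and the counting work; everything else is formal, and your surjectivity and injectivity steps are the standard ones and are carried out correctly. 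One small streamlining worth noting: your fact (i) follows in one line from fact (ii) applied to $w^{-1}$ — if some $\gamma\in I(w)$ lay in $\aff{\Delta}^{+}\setminus\Delta$, then $w^{-1}\gamma$ would again lie in $\aff{\Delta}^{+}\setminus\Delta$, contradicting $w^{-1}\gamma\in\aff{\Delta}^{-}$ — which avoids the implicit appeal, in your derivation of (i) via \cref{stm:full_description_of_negatives}, to the fact that a reduced expression in the parabolic subgroup $W$ remains reduced in $\aff{W}$; that fact is true, but it is itself a statement requiring the same circle of ideas, so the shortcut keeps the argument self-contained.
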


\subsection{Data of the usual and the affine Lie algebra}

The main reference for this section is \cite[Section 6.2]{Kac1983}.

Let $X_n$ be the Dynkin diagram of $\lie{g}'_n$. Mark the corresponding objects
with a subscript $n$. For example, let $\Delta_n\subset P_n$ be the set of roots
of $X_n$. Additionally, let $\theta\in\Delta^{+}_n$ be the highest root.
The description of these objects in each type follows.

\begin{center}\bf
Type $A$:
\end{center}

\bgroup
\def\arraystretch{1.2}
\begin{tabular}{>{$}l<{$}}
  n = r+1,\\
  \Delta^{+}_n = \{\eps_i-\eps_j \mid 1\leq i<j\leq r+1  \},\\
  \Pi_n = \{\alpha_i := \eps_i-\eps_{i+1}\mid  1\leq i\leq r \},\\
  \Pi^{\vee}_n = \{ \alpha_i^{\vee} := E_{i,i} - E_{i+1,i+1}\mid  1\leq i\leq r\},\\
  \theta = \eps_1-\eps_{r+1},\  \theta^{\vee} = E_{1,1}-E_{r+1,r+1}, \\
\end{tabular}
\egroup

\begin{center}\bf
Type $B$:
\end{center}

\bgroup
\def\arraystretch{1.2}
\begin{tabular}{>{$}l<{$}}
  n = 2r+1,\\
  \Delta^{+}_n = \{ \eps_i-\eps_j, \eps_i+\eps_j
  \mid 1\leq i<j\leq r  \} \cup \{ \eps_i \mid 1\leq i\leq r \},\\
  \Pi_n = \{ \alpha_i := \eps_i-\eps_{i+1} \mid 1\leq i\leq r-1 \} \cup \{ \alpha_r := \eps_r \}, \\
  \Pi_n^{\vee} = \{\alpha_i^{\vee} := E_{i,i} - E_{i+1,i+1}-E_{r+i,r+i} +E_{r+i+1,r+i+1} \mid 1\leq i\leq r-1 \}\\
  \quad\quad\ \ \cup \{ \alpha_r^{\vee} := 2E_{r,r} - 2E_{2r,2r} \},\\
  \theta =  \eps_1+\eps_2, \ \theta^{\vee} = E_{1,1} + E_{2,2}-E_{r+1,r+1} -E_{r+2,r+2} , \\
\end{tabular}
\egroup

\begin{center}\bf
Type $C$:
\end{center}

\bgroup
\def\arraystretch{1.2}
\begin{tabular}{>{$}l<{$}}
  n = 2r,\\
  \Delta^{+}_n = \{ \eps_i-\eps_j,  \eps_i+\eps_j \mid 1\leq i<j\leq r  \}
  \cup \{  2\eps_i \mid 1\leq i\leq r \},\\
  \Pi_n = \{\alpha_i :=  \eps_i-\eps_{i+1} \mid 1\leq i\leq r-1 \} \cup \{ \alpha_r :=  2\eps_r \},\\
  \Pi_n^{\vee} = \{ \alpha_i^{\vee} := E_{i,i} - E_{i+1,i+1}-E_{r+i,r+i} +E_{r+i+1,r+i+1} \mid 1\leq i\leq r-1\}\\
  \quad\quad\ \ \cup \{\alpha_r^{\vee} := E_{r,r} - E_{2r,2r} \},\\
  \theta =  2\eps_1,\ \theta^{\vee} =  E_{1,1} - E_{r+1,r+1},\\
\end{tabular}
\egroup

\begin{center}\bf
Type $D$:
\end{center}

\bgroup
\def\arraystretch{1.2}
\begin{tabular}{>{$}l<{$}}
  n = 2r,\\
  \Delta^{+}_n = \{\eps_i-\eps_j,  \eps_i+\eps_j \mid 1\leq i<j\leq r  \},\\
  \Pi_n = \{ \alpha_i :=  \eps_i-\eps_{i+1} \mid 1\leq i\leq r-1\} \cup \{ \alpha_r :=  \eps_{r-1}+\eps_r \},\\
  \Pi_n^{\vee} = \{ \alpha_i^{\vee} := E_{i,i} - E_{i+1,i+1}-E_{r+i,r+i} +E_{r+i+1,r+i+1} \mid 1\leq i\leq r-1\}\cup \\
  \quad\quad\ \ \cup \{\alpha_r^{\vee} := E_{r-1,r-1} + E_{r,r}-E_{2r-1,2r-1} -E_{2r,2r} \},\\
  \theta = \eps_1+\eps_2,\ \theta^{\vee} = E_{1,1} + E_{2,2}-E_{r+1,r+1} -E_{r+2,r+2},\\
\end{tabular}
\egroup
\smallskip

Now we want to describe the objects of $\aff{\lie{g}'}_n$. Since the weight spaces
are related to $G^0_n$ rather than to $\lie{g}'_n$, we also need to consider
$\aff{\lie{g}}_n$.

We have a subalgebra
$\lie{l}_n = \lie{h}_n\oplus\kk c\oplus\kk d\subset \aff{\lie{g}}_n$ of the
affine algebra $\aff{\lie{g}}_n$. Choose the elements
$\Lambda_0$ and $\delta$ in the dual space $\dual{\lie{l}_n}$ such 
that $\delta\vert_{\lie{h}\oplus\kk c} = 0, \langle \delta, d \rangle = 1,
\Lambda_0\vert_{\lie{h}\oplus\kk d} = 0, \langle\Lambda_0, c\rangle = \frac{(\theta,\theta)}{2}$.
We now have $\lie{l}^*_n = \dual{\lie{h}_n}\oplus\kk \Lambda_0\oplus\kk \delta$.

Denote the Dynkin diagram of $\aff{\lie{g}'}_n$ by $\aff{X}_n$.
Mark the related affine objects with a subscript $n$, for example, denote
the affine Weyl group of $\aff{\lie{g}'}_n$ by $\aff{W}_n$.

\begin{definition}
  Define the weight lattice $\aff{P}_n$ of $\aff{\lie{g}'}_n$ to be
  $P_n\oplus \Z\Lambda_0\oplus \Z\delta\subset \lie{l}^*_n$.
\end{definition}

The other objects are as follows.

\smallskip
\bgroup
\def\arraystretch{1.2}
\begin{tabular}{>{$}l<{$}}
  \aff{\Delta}^{+}_n = \Delta_n^{+}\cup \{\alpha +j\delta
\mid \alpha\in \Delta_n\cup\{0\}, 0<j\in\Z\},\\
  \aff{\Pi}_n = \Pi_n \cup \{\alpha_0
  := \delta-\theta \},\\
  \aff{\Pi}^{\vee}_n = \Pi^{\vee}_n \cup \{\alpha^{\vee}_0
  := \frac{2}{(\theta,\theta)}c-\theta^{\vee} \},\\
\end{tabular}
\egroup

\smallskip

Now we can derive a few properties of dominance. First, it is clear that a
weight $\phi = \lambda + k\Lambda_0 + j\delta$ is $X$-dominant if and only if
its component $\lambda$ is dominant, i.\,e. lies in $\dom{P}_n$.
Moreover, if $\lambda$ in fact lies $\filt{\dom{P}}{m}$, and $n\in I\setminus Z(E_m)$, the sufficient and
necessary condition for the dominance of $\phi$
is easy to establish.

\medskip
\bgroup
\def\arraystretch{1.2}
\begin{tabular}{ll}
  Case $GL$: & $\mu_1+\nu_1\leq k$, where $\lambda = [\mu,\nu]$,\\
  Case $O$: & $\lambda_1+\lambda_2 \leq k$,\\
  Case $Sp$: & $\lambda_1\leq k$.\\
\end{tabular}
\egroup

\smallskip

It can be proved by an examination of the value of $\alpha^{\vee}_0$ on $\phi$.

\subsection{Weyl-Kac formula and Garland formula in the finite case}

Now we proceed to the Weyl-Kac formula for irreducible representations of $\aff{\lie{g}'}_n$.
In the usual form, it expresses the characters as formal sums of the symbols
$e^{\phi}$, $\phi\in \aff{P}_n$.

\begin{theorem}
  \label{stm:usual_finite_kac}
  For a dominant weight $\phi\in\dom{\aff{P}}_n$, we have
  \begin{equation*}
    \chr L(\phi) =
    \frac%
    {\sum_{w\in \aff{W}_n} (-1)^{l(w)} e^{w\cdot \phi}}%
    {\prod_{\alpha\in\aff{\Delta}^{+}_n} (1 - e^{-\alpha})^{\mathrm{mult}\,\alpha}},
  \end{equation*}
  where $\mathrm{mult}\,\alpha$ denotes the dimension of the eigenspace of
  $\alpha$ in $\aff{\lie{g}'}_n$.
\end{theorem}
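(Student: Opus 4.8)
This is the classical Weyl--Kac character formula for the integrable irreducible highest weight module over the affine Kac--Moody algebra $\aff{\lie{g}'}_n$; since $\phi\in\dom{\aff{P}}_n$ is dominant integral, $L(\phi)$ is exactly such a module, so I would reprove the classical statement following \cite[Theorem 10.4]{Kac1983}. Writing $\rho\in\lie{l}^{*}_n$ for the Weyl vector with $\langle\rho,\alpha^{\vee}_x\rangle=1$ for every $x\in\aff{X}_n$, and letting $(\cdot,\cdot)$ also denote the standard invariant extension of the form to $\aff{P}_n$, the plan is to prove the numerator identity
\begin{equation*}
  R\cdot\chr L(\phi)=\sum_{w\in\aff{W}_n}(-1)^{l(w)}e^{w(\phi+\rho)},\qquad R:=e^{\rho}\prod_{\alpha\in\aff{\Delta}^{+}_n}(1-e^{-\alpha})^{\mathrm{mult}\,\alpha},
\end{equation*}
and then divide by $e^{\rho}\prod_\alpha(1-e^{-\alpha})^{\mathrm{mult}\,\alpha}$, rewriting $w(\phi+\rho)-\rho=w\cdot\phi$ via the dot action to obtain the claim.

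First I would introduce the generalized Casimir operator $\Omega$ on the BGG category, built from the invariant form on $\aff{\lie{g}}_n$; on a highest weight module of highest weight $\psi$ it acts by the scalar $(\psi+\rho,\psi+\rho)-(\rho,\rho)$. Next, since $L(\phi)$ lies in the BGG category, its character is a locally finite $\Z$-combination of Verma characters, $\chr L(\phi)=\sum_{\psi}c_\psi\,\chr M(\psi)$ with $c_\phi=1$ and $\phi-\psi=\sum_{x}k_x\alpha_x$, $k_x\geq0$, for each contributing $\psi$. Using $\chr M(\psi)=e^{\psi}/\prod_{\alpha}(1-e^{-\alpha})^{\mathrm{mult}\,\alpha}$ this gives $R\cdot\chr L(\phi)=\sum_{\psi}c_\psi e^{\psi+\rho}$, and comparing $\Omega$-eigenvalues forces every contributing $\psi$ to satisfy $(\psi+\rho,\psi+\rho)=(\phi+\rho,\phi+\rho)$.

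The heart of the argument is anti-invariance. As $L(\phi)$ is integrable, its weight multiplicities are $\aff{W}_n$-invariant, so $\chr L(\phi)$ is $\aff{W}_n$-invariant; on the other hand $R$ is $\aff{W}_n$-anti-invariant, $w(R)=(-1)^{l(w)}R$. For the latter it suffices to check a simple reflection $s_x$: by \cref{stm:simple_reflection_permutes_almost} it permutes $\aff{\Delta}^{+}_n\setminus\{\alpha_x\}$ preserving multiplicities (the imaginary roots $j\delta$ being fixed together with their multiplicities), sends $\alpha_x\mapsto-\alpha_x$, and sends $e^{\rho}\mapsto e^{\rho-\alpha_x}$, so a short computation yields the sign $-1$. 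Hence $F:=R\cdot\chr L(\phi)=\sum_{\psi}c_\psi e^{\psi+\rho}$ is anti-invariant, so exponents lying on a wall contribute $0$ and $F$ is determined by its strictly dominant exponents. For a strictly dominant $\nu=\psi+\rho$ contributing to $F$, set $\eta=(\phi+\rho)-\nu=\sum_x k_x\alpha_x$; expanding $(\nu,\nu)=(\phi+\rho,\phi+\rho)$ gives $2(\nu,\eta)+(\eta,\eta)=0$, and since the form is positive semidefinite on the root lattice and all affine simple roots are real, dominance of $\nu$ forces $(\nu,\eta)=(\eta,\eta)=0$. Then $(\phi+\rho,\eta)=(\nu,\eta)+(\eta,\eta)=0$, and as $\phi+\rho$ is strictly dominant this forces $\eta=0$, i.e. $\nu=\phi+\rho$. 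Thus $F=c\sum_{w}(-1)^{l(w)}e^{w(\phi+\rho)}$, and matching the extremal coefficient $c_\phi=1$ of $e^{\phi+\rho}$ gives $c=1$, establishing the numerator identity.

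I expect the main obstacle to be the bookkeeping that is invisible in the finite-dimensional case: realizing the infinite products and sums as elements of a suitable completion (each weight space is finite-dimensional, but the weights accumulate), constructing $\Omega$ and verifying its eigenvalue, and, above all, controlling the imaginary roots. Both the anti-invariance of $R$ and the final norm estimate rely on the precise structure $\aff{\Delta}^{+}_n=\Delta^{+}_n\cup\{\alpha+j\delta\}$ recorded earlier, in particular on $\delta$ spanning the radical of the form and on every simple root $\alpha_0,\dots,\alpha_r$ being real; the delicate point is that the norm equality still isolates $\phi+\rho$ despite the existence of nonzero isotropic $\eta$, which is exactly where strict dominance enters. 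The remaining manipulations are formal, and in the paper I would simply cite \cite[Theorem 10.4]{Kac1983}.
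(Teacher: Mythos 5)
Your proposal is correct and matches the paper exactly: the paper's entire proof is the citation to \cite[Theorem 10.4]{Kac1983}, and your sketch is a faithful reproduction of Kac's argument (Casimir eigenvalue comparison, anti-invariance of $R$, and the norm estimate with the isotropic case handled by strict dominance of $\phi+\rho$ and the reality of all affine simple roots). Since you also conclude by deferring to the same citation, there is nothing to add.
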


\begin{proof}
  See \cite[Theorem 10.4]{Kac1983}.
\end{proof}

This form is not suitable for our purposes, since it uses a more refined version
of characters, and uses some expressions that do not have a well-defined limit.
Luckily, it is well-known that one can deduce the parabolic version from the
usual one. We show the deduction here to illustrate how the set $\wx_n$ arises.

\begin{theorem}
  \label{stm:relative_finite_kac}
  For a dominant weight $\phi\in\dom{\aff{P}}_n$, we have
  \begin{equation*}
    \chr L(\phi) =
    \sum_{w\in \wx_n} (-1)^{l(w)}\, \chr M(w\cdot\phi)
  \end{equation*}
\end{theorem}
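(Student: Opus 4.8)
The plan is to factor the summation index set $\aff{W}_n$ in the usual Weyl-Kac formula of \cref{stm:usual_finite_kac} through the bijection of \cref{stm:wx_wx_gives_all}, and to read off the resulting inner sum as an ordinary (finite) Weyl character. I would carry out the computation at the level of \emph{refined} characters --- the formal sums of symbols $e^{\phi}$, $\phi\in\aff{P}_n$, in which \cref{stm:usual_finite_kac} is phrased --- and only pass to $K(\cat{C}^a)$ at the very end. The relevant projection sends $e^{\lambda+k\Lambda_0+j\delta}$ to the class of the torus weight $\lambda$, regarded as a virtual $G_n^0$-module, placed in grade $(k,j)$; it is additive and carries the refined character of any module in $\aff{\cat{O}}$ to its class $\chr$, so the coarse identity of the theorem follows once the refined identity is established.

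First I would split the Weyl-Kac denominator according to whether a positive root is finite. The finite roots $\Delta^{+}_n$ each have multiplicity one, whereas the remaining positive roots $\aff{\Delta}^{+}_n\setminus\Delta^{+}_n=\{\alpha+j\delta\mid\alpha\in\Delta_n\cup\{0\},\,j>0\}$ are exactly the negatives of the roots occurring in $\lie{u}^{-}=\bigoplus_{i<0}\lie{g}_n z^{i}$, with matching multiplicities. By PBW this gives $\chr U(\lie{u}^{-})=\prod_{\alpha\in\aff{\Delta}^{+}_n\setminus\Delta^{+}_n}(1-e^{-\alpha})^{-\mathrm{mult}\,\alpha}$, so for any $X$-dominant weight $\psi$ the parabolic Verma module $M(\psi)=U(\lie{u}^{-})\tens L_\psi$ has refined character $\chr M(\psi)=\chr L_\psi\big/\prod_{\alpha\in\aff{\Delta}^{+}_n\setminus\Delta^{+}_n}(1-e^{-\alpha})^{\mathrm{mult}\,\alpha}$, where $L_\psi$ denotes the finite-dimensional irreducible $G_n^0$-module of the corresponding highest weight.

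Next I would rewrite the numerator. By \cref{stm:wx_wx_gives_all} every $w\in\aff{W}_n$ is uniquely $w=uw'$ with $u\in W(X_n)$ and $w'\in\wx_n$, and for this factorization the length is additive, $l(uw')=l(u)+l(w')$, so $(-1)^{l(w)}=(-1)^{l(u)}(-1)^{l(w')}$. Since the dot action is a left action, $(uw')\cdot\phi=u\cdot(w'\cdot\phi)$, the numerator becomes $\sum_{w'\in\wx_n}(-1)^{l(w')}\sum_{u\in W(X_n)}(-1)^{l(u)}e^{u\cdot(w'\cdot\phi)}$. By \cref{stm:wx_gives_x_dominant} each $\psi:=w'\cdot\phi$ is $X$-dominant, hence its finite part is a dominant weight of $\lie{g}'_n$, and the ordinary Weyl character formula yields $\sum_{u\in W(X_n)}(-1)^{l(u)}e^{u\cdot\psi}=\chr L_\psi\cdot\prod_{\alpha\in\Delta^{+}_n}(1-e^{-\alpha})$.

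Finally I would divide: the finite factor $\prod_{\alpha\in\Delta^{+}_n}(1-e^{-\alpha})$ cancels the Weyl-character denominator, leaving $\chr L(\phi)=\sum_{w'\in\wx_n}(-1)^{l(w')}\,\chr L_{w'\cdot\phi}\big/\prod_{\alpha\in\aff{\Delta}^{+}_n\setminus\Delta^{+}_n}(1-e^{-\alpha})^{\mathrm{mult}\,\alpha}=\sum_{w'\in\wx_n}(-1)^{l(w')}\,\chr M(w'\cdot\phi)$, and projecting to $K(\cat{C}^a)$ gives the stated formula. I expect the main obstacle to be the length-additivity $l(uw')=l(u)+l(w')$ and the consequent factoring of signs: this is precisely the property that singles out the $X$-reduced set $\wx_n$ (the minimal-length representatives of the cosets $W(X_n)\backslash\aff{W}_n$) as the correct index set, and it is what lets the two halves of the Weyl-Kac fraction recombine into parabolic Verma characters. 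It can be verified through the inversion-set formalism of \cref{sec:bourbaki_stuff}: since $w'$ is $X$-reduced, $w'^{-1}$ keeps every finite positive root positive, so $I(u)\subseteq\Delta^{+}_n$ forces $w'^{-1}(I(u))\subseteq\aff{\Delta}^{+}_n$ and $I(uw')=I(w')\sqcup w'^{-1}(I(u))$; everything else reduces to the Weyl character formula and a PBW count.
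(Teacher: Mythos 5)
Your proposal is correct and follows essentially the same route as the paper: both group the numerator of \cref{stm:usual_finite_kac} along the bijection $W_n\times\wx_n\to\aff{W}_n$ of \cref{stm:wx_wx_gives_all}, use \cref{stm:wx_gives_x_dominant} to apply the ordinary Weyl character formula of $\lie{g}'_n$ to each inner alternating sum, and recognize the leftover denominator factor as $\chr U(\lie{u}^{-}_n)$, so that each block becomes $(-1)^{l(w)}\,\chr M(w\cdot\phi)$. The only caveats are cosmetic: with the paper's convention $I(w)=\aff{\Delta}^{+}\cap w(\aff{\Delta}^{-})$ the disjoint-union identity should read $I(uw')=I(u)\sqcup u\bigl(I(w')\bigr)$ (your version is the statement for the opposite inversion convention), the PBW count should be taken in $\lie{g}'_n$ rather than $\lie{g}_n$ to match the multiplicities of the imaginary roots in the case $GL$, and the sign factorization needs no length additivity at all, since $(-1)^{l(\cdot)}$ is the sign character of $\aff{W}_n$.
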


\begin{proof}
  Pick $w\in\wx_n$.
  By \cref{stm:wx_gives_x_dominant}, the weight $w\cdot\phi$ is $X$-dominant.
  Consider the subsum
  $$
    \frac%
    {\sum_{w'\in W_n} (-1)^{l(w'w)} e^{w'\cdot (w\cdot \phi)}}%
    {\prod_{\alpha\in\aff{\Delta}^{+}_n} (1 - e^{-\alpha})^{\mathrm{mult}\,\alpha}}
  $$ in \cref{stm:usual_finite_kac}.
  By the usual Weyl character formula for $\lie{g}'_n$, it is equal to $\chr U(\lie{u}^{-}_n)
  \cdot (-1)^{l(w)} \chr L_{w\cdot\phi} = (-1)^{l(w)} \chr M(w\cdot\phi)$.
  By \cref{stm:wx_wx_gives_all} we are done.
\end{proof}

Another fact that is closely related to the Weyl-Kac formula and can be seen as a
generalization of the Weyl denominator formula is the Garland formula.

\begin{theorem}
  For any nonnegative $i\in\Z_{\geq 0}$, the $i$-th cohomology of $\lie{u}^{-}_n$
  is isomorphic to 
  $$
  H^i(\lie{u}^{-}_n) \iso \bigoplus_{\substack{w\in \wx_{n} \\ l(w) = i}}
  L_{w\cdot 0}
  $$
  as an $\lie{l}_{n}$-module.
\end{theorem}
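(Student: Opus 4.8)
The plan is to compute $H^\bullet(\lie{u}^{-}_n)$ from the Chevalley--Eilenberg complex $\Lambda^\bullet \lie{u}^{-}_n$ with trivial coefficients, viewed as a complex of $\lie{l}_n$-modules, and to pin down its cohomology in two stages: first matching the $\lie{l}_n$-graded Euler characteristic against the Weyl--Kac denominator identity, then separating the contributions degree by degree. Since $\lie{g}'_n$ is reductive and the grading by $\kk c\oplus\kk d$ is semisimple, every term $\Lambda^i\lie{u}^{-}_n$ and every group $H^i(\lie{u}^{-}_n)$ is a semisimple $\lie{l}_n$-module with finite-dimensional graded pieces, so it suffices to work with formal characters.

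First I would record the Euler characteristic. By PBW one has $\chr M(\phi) = \chr U(\lie{u}^{-}_n)\cdot \chr L_\phi$, exactly as used in the proof of \cref{stm:relative_finite_kac}, while the Koszul-type cancellation in $\Lambda^\bullet\lie{u}^{-}_n$ gives $\sum_i (-1)^i \chr \Lambda^i \lie{u}^{-}_n = \prod_{\alpha\in\aff{\Delta}^{+}_n\setminus\Delta^{+}_n}(1-e^{-\alpha})^{\mathrm{mult}\,\alpha} = \chr U(\lie{u}^{-}_n)^{-1}$, since the roots of $\lie{u}^{-}_n$ are the negatives of $\aff{\Delta}^{+}_n\setminus\Delta^{+}_n$ with the same multiplicities. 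Applying \cref{stm:relative_finite_kac} to the dominant weight $\phi = 0$, whose irreducible quotient is $L(0) = \triv$ with $\chr\triv = 1$, yields $1 = \sum_{w\in\wx_n}(-1)^{l(w)}\,\chr U(\lie{u}^{-}_n)\,\chr L_{w\cdot 0}$. Dividing by $\chr U(\lie{u}^{-}_n)$ and comparing with the Koszul identity gives
\[
  \sum_i (-1)^i \chr H^i(\lie{u}^{-}_n) \;=\; \sum_{w\in\wx_n} (-1)^{l(w)}\, \chr L_{w\cdot 0}.
\]
By \cref{stm:wx_gives_x_dominant} each $w\cdot 0$ is $X$-dominant, so $L_{w\cdot 0}$ is a genuine simple $\lie{l}_n$-module, and by \cref{stm:dot_action_on_zero_is_unique} the weights $w\cdot 0$ are pairwise distinct as $w$ ranges over $\wx_n$, so there is no cancellation on the right.

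The hard part will be upgrading this alternating identity to the claimed isomorphism in each fixed degree, i.e. showing that $H^i(\lie{u}^{-}_n)$ consists of exactly the summands $L_{w\cdot 0}$ with $l(w)=i$ and nothing else. For this I would run the Casimir argument underlying Kostant's theorem in its characteristic-free (Casselman--Osborne) form: the quadratic Casimir $\Omega$ of $\aff{\lie{g}'}_n$ acts on the complex $\Lambda^\bullet\lie{u}^{-}_n$, commutes with the differential, and hence acts on each $H^i$; on an $\lie{l}_n$-constituent of highest weight $\mu$ it acts by the scalar $(\mu+\rho,\mu+\rho)-(\rho,\rho)$, which must agree with the value on $\triv$, forcing $\mu+\rho\in \aff{W}_n\rho$ and therefore $\mu = w\cdot 0$ for some $w\in\wx_n$. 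To place such a constituent in the correct homological degree I would use that $w\cdot 0 = -\sum_{\alpha\in I(w)}\alpha$ with $|I(w)| = l(w)$, by \cref{stm:dot_action_by_negatives_sum} and \cref{stm:full_description_of_negatives}: the weight $w\cdot 0$ can occur in $\Lambda^p\lie{u}^{-}_n$ only for $p\geq l(w)$, and an extremal-weight (minimal-degree) analysis forces its harmonic representative to sit precisely in degree $l(w)$; the Euler characteristic then fixes every multiplicity to be one. The genuine obstacle is executing this degree separation over an arbitrary characteristic-zero $\kk$, where the positive-definite harmonic theory is unavailable, so here I would either carry out the purely algebraic Casimir/Casselman--Osborne computation or simply invoke the Garland--Lepowsky homology theorem, of which the present statement is the trivial-coefficient, parabolic-affine special case.
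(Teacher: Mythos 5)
Your proposal is essentially correct, but it takes a genuinely different route from the paper: for this statement the paper gives no argument at all, simply citing Garland \cite[Theorem 3.2]{Garland1975}, whereas you reconstruct a proof, and what you sketch is in substance the classical Kostant argument in its Garland--Lepowsky form (Euler characteristic of the Chevalley--Eilenberg complex matched against the specialization $\phi=0$ of \cref{stm:relative_finite_kac}, followed by a Casimir argument separating homological degrees). Your first half is sound: each $\delta$-graded piece of $\Lambda^{\bullet}\lie{u}^{-}_n$ is finite-dimensional, so the character identity makes sense degree by degree, and invoking \cref{stm:relative_finite_kac} at $\phi=0$ is not circular here, since the paper derives that formula from Kac's character formula (\cref{stm:usual_finite_kac}) and not from Garland's theorem --- though note that historically Garland--Lepowsky run the implication in the opposite direction, deducing the Macdonald--Kac identities from the cohomology computation. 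Three details in your second half deserve care. First, $\Omega$ does not act on the complex $\Lambda^{\bullet}\lie{u}^{-}_n$ itself, which is merely an $\lie{l}_n$-module rather than an $\aff{\lie{g}'}_n$-module; the correct mechanism is the Casselman--Osborne comparison for $H^i(\lie{u}^{-}_n,\triv)$, or equivalently Kostant's Laplacian on the complex, which is exactly the apparatus Garland sets up. Second, in the indefinite affine form the norm condition $(\mu+\rho,\mu+\rho)=(\rho,\rho)$ alone does not force $\mu+\rho\in\aff{W}_n\,\rho$; you must combine it with the fact that a highest weight $\mu$ of a constituent of $H^i$ is an $X$-dominant weight of $\Lambda^i\lie{u}^{-}_n$, hence a sum of $i$ distinct roots of $\lie{u}^{-}_n$, and then invoke the uniqueness lemma that such a weight equals $w\cdot 0$ only when the summed set is $-I(w)$, so with exactly $l(w)=i$ terms and multiplicity one --- your ``occurs only for $p\geq l(w)$ plus extremal-weight analysis'' is a placeholder for precisely this combinatorial lemma (the affine extension of Kostant's Lemma, consistent with \cref{stm:dot_action_by_negatives_sum} and \cref{stm:full_description_of_negatives}). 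Third, your worry about general $\kk$ of characteristic $0$ dissolves more cheaply than by an algebraic Casimir computation: the complex and all the modules $L_{w\cdot 0}$ are defined and split over $\Q$, so flat base change gives $H^i(\lie{u}^{-}_n\tens\kk)\iso H^i(\lie{u}^{-}_{n,\Q})\tens\kk$, and no harmonic theory over $\kk$ is needed. With these repairs your argument is a complete proof, and the multiplicity-one conclusion from the Euler characteristic goes through because the weights $w\cdot 0$ are pairwise distinct and $X$-dominant by \cref{stm:dot_action_on_zero_is_unique} and \cref{stm:wx_gives_x_dominant}; your stated fallback of simply invoking the Garland--Lepowsky theorem is exactly what the paper does.
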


\begin{proof}
  See \cite[Theorem 3.2]{Garland1975}.
\end{proof}

\section{Stable Weyl-Kac formula}

\subsection{$M(\phi)$ and $L(\phi)$ coherence}
\label{sec:m_l_coherence}

\newcommand{\str}[1]{\mathrm{#1}}

We want to relate the constructions of $M(\phi)$, $L(\phi)$, and $H^i(\lie{u}^{-})$
in $\drep(G_t)$ for non-integer $t$ and in $\Rep(G_n)$ for $n\in I$. To do that,
we extend this constructions to a particular sheaf of abelian categories over $\kk[T]$.

In \cref{sec:deligne_category} we discussed the category $\drep^0(G,R)$.
Consider instead the same free category, but without any $R$-linearity condition,
and any restrictions on the dimesion of the object $V$.
The resulting corresponding category $\str{Br}$ is indeed
the category of walled Brauer diagrams in the $GL$ case, and of Brauer diagrams
in the $O$ and $Sp$ cases. Note that $\str{Br}$ is
a symmetric monoidal category. Consider the category $\Hom(\str{Br}^{op}, \kk\text{-}\str{mod})$
of $\kk$-linear $\str{Br}$ preseaves. Via the Day convolution, we extend the symmetric
monoidal structure from $\str{Br}$ to $\Hom(\str{Br}^{op}, \kk\text{-}\str{mod})$.
The identity object in $\str{Br}$, the empty set, has the endomorphism set $\Z_{\geq 0}$,
where $m\in \Z_{\geq 0}$ corresponds to the loop in the $m$-th power. Translating to
$\Hom(\str{Br}^{op}, \kk\text{-}\str{mod})$, we get that $\End(\triv) \iso \kk[T]$,
where $T$ represents the loop. Every object bears an action of $\End(\triv)$, so in fact we can
think about $\Hom(\str{Br}^{op}, \kk\text{-}\str{mod})$ as a particular full subcategory
of $\Hom(\str{Br}^{op}, \kk[T]\text{-}\str{mod})$. To emphasize this fact, we denote
$\Hom(\str{Br}^{op}, \kk\text{-}\str{mod})$ by $\drep^{c}(G,\kk[T])$. In a similar manner one can define
$\drep^{c}(G,R)$ for any $\kk[T]$-algebra $R$.

\begin{proposition}
  For any $\kk[T]$-algebra $R$, we have the following:
  \begin{enumerate}
  \item $\drep^{c}(G,R)$ is a cocomplete abelian category,
  \item there is a natural fully faithful embedding
    $\drep(G,R) \to \drep^c(G,R)$.
  \end{enumerate}
\end{proposition}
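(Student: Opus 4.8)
For claim (1), I would observe that $\drep^c(G,R) = \Hom(\str{Br}^{op}, R\text{-}\str{mod})$ is a category of $R$-linear presheaves on the small category $\str{Br}$ (more precisely, functors landing in $R$-modules, with the $\kk[T]$-action on $\End(\triv)$ compatible with the $R$-algebra structure). Functor categories into a cocomplete abelian category are themselves cocomplete abelian, with limits and colimits computed pointwise on objects of $\str{Br}$. Since $R\text{-}\str{mod}$ is cocomplete abelian for any ring $R$, the first claim follows immediately: kernels, cokernels, and arbitrary coproducts are all defined objectwise, and exactness is checked objectwise. The only point requiring a sentence of care is the identification $\End(\triv) \iso \kk[T]$ via the loop endomorphism $T$, which ensures the $\kk[T]$-action is present on every object and that the $R$-linear structure (for a $\kk[T]$-algebra $R$) interacts correctly; I would cite the discussion already given in the excerpt for this.

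For claim (2), the key tool is the Yoneda embedding. The category $\drep^0(G,R)$ sits inside $\str{Br}$-presheaves via $V^{\tens r}\tens(V^*)^{\tens s}\mapsto \Hom_{\str{Br}}(-, \text{corresponding diagram object})\tens_{\kk[T]} R$, i.e. the representable presheaves base-changed to $R$. The Yoneda lemma gives that $\Hom(\mathbf{y}A, \mathbf{y}B)\iso \Hom_{\str{Br}}(A,B)$, and after tensoring with $R$ this matches the hom-spaces (walled Brauer / Brauer algebras over $R$) defining $\drep^0(G,R)$; this gives a fully faithful functor $\drep^0(G,R)\to\drep^c(G,R)$. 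Since $\drep^c(G,R)$ is abelian (hence additive and idempotent-complete by claim (1)), the embedding extends uniquely to the additive closure and then to the Karoubi closure, which is precisely $\drep(G,R)$ by its definition in \cref{sec:deligne_category}. Fullness and faithfulness are preserved under passing to the idempotent completion because idempotents and their images are computed identically on both sides.

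\textbf{The main obstacle is the faithfulness and fullness of the extended embedding on the Karoubi closure}, specifically verifying that the splitting idempotents $e_\lambda$, $e_{r,s}$, $e_r$ of \cref{stm:gl_brauer_step,stm:gl_brauer_down,stm:o_brauer_step,stm:o_brauer_down} have images in $\drep^c(G,R)$ that reproduce the correct hom-spaces, so that no morphisms are lost or gained. Concretely, I must check that the natural map $e_\lambda \cdot \Hom_{\drep(G,R)}(X,Y)\cdot e_{\lambda'}\to \Hom_{\drep^c(G,R)}((\mathbf{y}X, e_\lambda),(\mathbf{y}Y,e_{\lambda'}))$ is a bijection; this reduces, via Yoneda, to the statement that homs between representables are computed by the Brauer-type algebras, which is built into the construction of $\str{Br}$, so the obstacle is essentially bookkeeping rather than a genuine difficulty. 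The forgetful comparison with the usual $\drep(G,R)$ of \cref{sec:deligne_category}, where homs are defined directly as walled Brauer algebras rather than via presheaves, is the one place where I would spell out the identification carefully.
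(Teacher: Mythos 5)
Your proposal is correct and follows essentially the same route as the paper: point (1) is the standard fact that a presheaf category valued in modules is cocomplete abelian, and point (2) is the Yoneda embedding of $\drep^0(G,R)$ into $\drep^c(G,R)$, extended through the additive and Karoubi closures using that the presheaf category is idempotent-complete. The paper states the fully faithful extension without elaboration, whereas you spell out the Yoneda identification of hom-spaces with the (walled) Brauer algebras over $R$ and its compatibility with the idempotents; this is exactly the bookkeeping the paper's ``easy to see'' elides.
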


\begin{proof}
  The first point is obvious from the fact that $\drep^c(G,R)$ is a presheaf category.
  For the second point, note that there is a natural Yoneda embedding $\drep^0(G,R) \to \drep^c(G,R)$.
  Since $\drep^c(G,R)$ is Karoubi complete, the embedding extends to a functor $\drep(G,R)\to \drep^c(G,R)$.
  It is easy to see that the functor gives a fully faithful embedding.
\end{proof}

The categories $\drep^c(G,R)$ form a sheaf of abelian cocomplete categories over $\kk[T]$.
This sheaf provides a good framework for producing constructions that are coherent for different $t$ and $n$.

\begin{proposition}
  Suppose that we have a section $X\in \drep^c(G,P^{-1}\kk[T])$,
  where $P\in\kk[T]$, and $X\tens\kk_t\in \filt{\drep(G_t)}{m}$
  for some fixed $m$ and all $t\in \kk\setminus Z(P)$. Then the class
  $[ X\tens \kk_t ]\in K_m$ is the same for all $t\in \kk\setminus Z(PQ)$,
  where $Q\in\kk[T]$ is a nonzero polynomial.
\end{proposition}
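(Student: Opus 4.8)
The plan is to decompose $X$ into indecomposables already over a localization of $\kk[T]$, and then specialize fiberwise. Recall first that, by \cref{stm:gl_brauer_enum,stm:o_brauer_enum}, the indecomposable objects of $\filt{\drep(G_t)}{m}$ are the $L_\lambda$ with $\lambda\in\dom{P}_m$, so the split Grothendieck group $K(\filt{\drep(G_t)}{m})$ is free on the classes $[L_\lambda]$; this is the identification with the fixed group $K_m$. Since $\drep(G_t)$ is abelian and semisimple for non-integer $t$ (see \cite{CW2011,CH2015}), the class $[X\tens\kk_t]$ is $\sum_\lambda n_\lambda(t)[L_\lambda]$, where $n_\lambda(t)$ is the multiplicity of $L_\lambda$ in $X\tens\kk_t$. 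It therefore suffices to show that each $n_\lambda(t)$ is independent of $t$ away from a finite set.

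First I would pass to the generic fiber. Let $R=P^{-1}\kk[T]$ with fraction field $K=\kk(T)$. The image of $T$ in $K$ is transcendental, hence avoids every integer, so the same arguments as for non-integer $t$ show that $\drep(G,K)$ is abelian and semisimple with simple objects $L_\lambda^{K}$. Using the coherence of the sheaf $\drep^c(G,-)$ together with the hypothesis — which holds for the cofinite set $\kk\setminus Z(P)$ of fibers — the generic fiber $X\tens_R K$ again lies in the finite filtration $\filt{\drep(G,K)}{m}$, and so decomposes as $X\tens_R K\iso\bigoplus_{\lambda\in\dom{P}_m}(L_\lambda^{K})^{\oplus d_\lambda}$ for uniquely determined generic multiplicities $d_\lambda\in\Z_{\geq 0}$.

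Next I would spread this decomposition out. The isomorphism above, together with the primitive idempotents $e_\lambda$ cutting out each $L_\lambda^{K}$ inside a fixed $V^{\tens r}\tens(V^*)^{\tens s}$ with $r+s\leq m$, is witnessed by finitely many morphisms with coefficients in $K$ and finitely many polynomial relations among them (idempotency, orthogonality, and completeness of the summands). Because the idempotents $e_\lambda$ are constructed rationally in $T$ via \cref{stm:gl_brauer_step,stm:gl_brauer_down} in the case $GL$ and \cref{stm:o_brauer_step,stm:o_brauer_down} in the cases $O$ and $Sp$, clearing denominators produces a single nonzero $Q\in\kk[T]$ such that, over $R'=(PQ)^{-1}\kk[T]$, every $e_\lambda$ and hence every object $L_\lambda^{R'}$ is defined, all the witnessing morphisms are defined, and the decomposition $X\tens_R R'\iso\bigoplus_\lambda(L_\lambda^{R'})^{\oplus d_\lambda}$ holds in $\drep(G,R')$.

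Finally I would specialize. For every $t\in\kk\setminus Z(PQ)$ the base change $\tens_{R'}\kk_t$ is a linear tensor functor, hence preserves direct sums and idempotents, giving $X\tens\kk_t\iso\bigoplus_\lambda(L_\lambda^{R'}\tens\kk_t)^{\oplus d_\lambda}$. Since the reduction of $e_\lambda$ over $\kk_t$ is a primitive idempotent in the prescribed conjugacy class, $L_\lambda^{R'}\tens\kk_t\iso L_\lambda\in\drep(G_t)$, so $n_\lambda(t)=d_\lambda$ and $[X\tens\kk_t]=\sum_\lambda d_\lambda[L_\lambda]$ is constant on $\kk\setminus Z(PQ)$. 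The main obstacle is precisely the spreading-out step: one must verify that the coherence of $\drep^c(G,-)$ genuinely forces the generic fiber into the finite filtration, and that the finitely many data witnessing the generic decomposition — above all the primitive idempotents $e_\lambda$ and the orthogonality and completeness relations — are defined over one localization $R'$ and specialize at $t\notin Z(PQ)$ to primitive idempotents in the correct classes, so that $L_\lambda^{R'}\tens\kk_t\iso L_\lambda$. This is where the explicit rational-in-$T$ construction of the $e_\lambda$ and the semisimplicity of $\drep(G_t)$ are essential.
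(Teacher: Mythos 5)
Your strategy — decompose the generic fiber over $K=\kk(T)$, spread the decomposition out over a localization $R'$, and specialize — is genuinely different from the paper's, but the obstacle you flag at the end is a real gap, not a routine verification, and it is exactly where the argument breaks. The hypothesis of the proposition is purely fiberwise at closed points, and in the presheaf category $\drep^c(G,P^{-1}\kk[T])$ such hypotheses do not control the generic fiber. Concretely, let $Y\in\drep^c(G,\kk[T])$ be the Yoneda image of the unit object and set $X = Y\tens_{\kk[T]}\kk(T)$, viewed as a section over $\kk[T]$ (so $P=1$). Then $X\tens\kk_t=0$ for every closed point $t$, because $\kk(T)\tens_{\kk[T]}\kk_t=0$; so the hypothesis holds for any $m$, and the conclusion holds trivially. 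But the generic fiber of $X$ is the unit object $\triv\neq 0$, so your generic multiplicities give $d_{\triv}=1$ while $n_{\triv}(t)=0$ for all closed $t$: the identity $n_\lambda(t)=d_\lambda$, which your proof concludes, is false here. What is missing is a coherence input on $X$ — e.g.\ that after inverting one more polynomial $X\tens R'$ actually lies in $\filt{\drep(G,R')}{m}$, or at least that each module $\Hom(L_\lambda,X)$ is finitely generated over the base ring. This holds for the sections the paper actually feeds into the proposition (they are built from dualizable objects by finitely many sums, tensor products, kernels and cokernels), but it is not part of the stated hypotheses, and your sketch nowhere establishes it; without it the spreading-out step cannot be carried out, and with it your argument would need a further check (which you assert but do not prove) that the specialized idempotents remain primitive at integer points $t\in\kk\setminus Z(PQ)$, where $\drep(G_t)$ is not semisimple and your appeal to semisimplicity is unavailable.

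For comparison, the paper's own proof never decomposes $X$ over any ring. For each of the finitely many $\lambda\in\filt{\dom{P}}{m}$ it defines $L_\lambda$ as a section over $Q_\lambda^{-1}\kk[T]$ (the idempotent needs only finitely many denominators), observes that $\Hom(L_\lambda,-)$ commutes with base change because $L_\lambda$ is a retract of a representable, and then inverts one more polynomial $Q'_\lambda$ so that the module $\Hom(L_\lambda,X)$ becomes locally free over $(PQ_\lambda Q'_\lambda)^{-1}\kk[T]$; constancy of its rank gives constancy of the multiplicity, and $Q=\prod_\lambda Q_\lambda Q'_\lambda$. This route needs only generic freeness for modules over a localization of $\kk[T]$ — admittedly the same implicit finiteness assumption, hidden in the paper's ``obviously,'' but applied to Hom modules over a PID rather than to the object $X$ itself — and it treats all specializations uniformly. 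If you repair your sketch by first proving, for the intended class of sections, that $X\tens R'\in\filt{\drep(G,R')}{m}$ for some localization $R'$, then your specialization argument does go through and even yields the stronger conclusion that the direct-sum decomposition itself, not merely the class in $K_m$, is constant in $t$; as written, however, the proof has a hole at precisely the step you identified.
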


\begin{proof}
  Consider an irreducible object $L_\phi\in\drep(G,\kk(t))$ for $\phi\in\filt{\dom{P}}{m}$.
  The corresponding idempotent requires the localization of a finite number of polynomials.
  Therefore, $L_{\phi}$ can be defined as a section over $Q_\phi^{-1}\kk[T]$ for some
  nonzero $Q_{\phi}\in\kk[T]$. Now take the $\Hom$ space $\Hom(L_\phi,X)$. Since both
  $L_{\phi}$ and $X$ are sections over $(PQ_{\phi})^{-1}\kk[T]$, and taking $\Hom$ commutes
  with base change, we can consider section-wise $\Hom(L_\phi,X)$ as a sheaf of modules over
  $(PQ_{\phi})^{-1}\kk[T]$. Obviously, one can find a polynomial $Q'_{\phi}$ such that
  $\Hom(L_\phi,X)$ is a locally free sheaf over $(PQ_{\phi}Q'_\phi)^{-1}\kk[T]$.
  Therefore, the rank of $\Hom(L_\phi,X)$ is the same for all $t\in\kk\setminus Z(PQ_\phi Q'_\phi)$.
  So it is sufficient to take $Q = \prod_{\phi\in \filt{\dom{P}}{m}} Q_\phi Q'_\phi$ to get the same character.
\end{proof}

Take a construction such as $M(\phi)$, $\phi\in\dom{P}_{\infty}$,
and a grading $\epsilon = k\Lambda_0 + j\delta$. It is easy to see that the $\epsilon$-grading component
$[\epsilon]M(\phi)$ of $M(\phi)$ is a construction that takes sections $L_\phi$, $V$, and $V^*$,
and then makes a finite number of operations like taking sums, tensor products,
kernels and cokernels. Obviously the sum or the tensor product commutes with base change.
This is not true for the kernels and cokernels, but for any application of a kernel or a cokernel
we can localize in a polynomial to retain the section-wise consistency. Therefore,
$[\epsilon]M(\phi)$ can be considered as a valid section of our category sheaf $\drep^c(G,-)$
over some $Q^{-1}\kk[T]$, $Q\in\kk[T]$. Now using the proposition,
we get that the character of $[\epsilon]M(\phi)$ is coherent across all $t\in\kk\setminus Z(Q)$.
The same trick obviously works for $L(\phi)$ and the cohomologies of $\lie{u}^{-}$.
The following theorem is the restatement of the above conclusion.

\begin{theorem}
  \label{stm:m_l_coherence}
  Given an object $S = M(\phi)$, $L(\phi)$, or $H^i(\lie{u}^{-})$
  and a grading $\epsilon = k\Lambda_0+j\delta$,
  the grading component $[\epsilon]S$ has the same character for all
  $t\in \kk\setminus Z(Q)$ and $n\in I\setminus Z(Q)$, where $Q\in\kk[T]$ is
  a nonzero polynomial.
\end{theorem}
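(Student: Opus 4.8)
The plan is to package each graded component $[\epsilon]S$ as a single section of the category sheaf $\drep^c(G,-)$ over a suitable localization $Q^{-1}\kk[T]$, and then to feed that section into the second proposition preceding the theorem. First I would observe that $[\epsilon]S$ is built from the sections $L_\phi$, $V$, and $V^*$ by \emph{finitely many} categorical operations. For $S=M(\phi)=U(\aff{\lie{g}})\tens_{U(\lie{l}\oplus\lie{u}^{+})}L_\phi$, the fixed grading $\epsilon=k\Lambda_0+j\delta$ only receives contributions from finitely many lower gradings, so $[\epsilon]M(\phi)$ is a finite diagram of sums and tensor products of $V$, $V^*$, and $L_\phi$; for $L(\phi)$ one additionally takes a cokernel (the simple quotient), and for $H^i(\lie{u}^{-})$ one takes the cohomology of the Chevalley–Eilenberg complex, which is a kernel modulo an image. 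In every case the output lands in a fixed filtration level $\filt{\drep(G_t)}{m}$ with $m$ depending only on $\epsilon$ and $\phi$.

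Next I would track base change through this diagram. Direct sums and tensor products commute with $-\tens\kk_t$ strictly, so they cause no difficulty. The only operations that can fail to commute with base change are the kernels and cokernels; but for each such operation the failure locus is cut out by the vanishing of finitely many minors, so after inverting one more polynomial the chosen kernel or cokernel becomes compatible with every specialization $T\mapsto t$ in the complement. Gathering the finitely many polynomials produced by the individual operations into one nonzero $Q\in\kk[T]$, the component $[\epsilon]S$ becomes a well-defined section over $Q^{-1}\kk[T]$ whose fibre at each $t\notin Z(Q)$ is the honest graded component computed in $\drep(G_t)$, and which still takes values in $\filt{\drep(G,-)}{m}$.

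At this point the second proposition applies verbatim: the class $[[\epsilon]S\tens\kk_t]\in K(\filt{\drep(G_t)}{m})$ is independent of $t\in\kk\setminus Z(Q)$ after possibly enlarging $Q$. This disposes of the transcendental specializations. For the finite values $n\in I$, I would compose additionally with the tensor functor $F_n\:\drep(G_n)\to\rep(G_n^0)$. Since $F_n$ is monoidal and exact, it commutes with the same finite diagram of sums, tensor products, kernels, and cokernels, so the specialization of the section at $T\mapsto n$ followed by $F_n$ computes precisely the graded component of the corresponding affine module over $\aff{\lie{g}'}_n$. By \cref{stm:deligne_finite_connection}, $F_n(L_\lambda)\iso L_\lambda\in\rep(G_n^0)$ for all $\lambda\in\filt{\dom{P}}{m}$ and $n\in I\setminus Z(\bad_m)$, so absorbing $\bad_m$ into $Q$ identifies the characters on the two sides and yields the single polynomial required in the statement.

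The main obstacle I anticipate is entirely in the first two steps: one must verify that the construction of $[\epsilon]S$ really is a finite limit–colimit diagram whose value stays inside a fixed filtration level, and that the finitely many localizations needed to make the kernels and cokernels compatible with base change can be chosen uniformly in a single $Q$. Once the construction is assembled as one section over $Q^{-1}\kk[T]$, the remainder is an immediate consequence of the proposition together with the comparison functor $F_n$, so the content of the theorem is concentrated in this coherence bookkeeping rather than in any new algebraic input.
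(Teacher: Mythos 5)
Your proposal is correct and follows essentially the same route as the paper: the paper likewise realizes $[\epsilon]S$ as a finite diagram of sums, tensor products, kernels and cokernels applied to the sections $L_\phi$, $V$, $V^*$ in the sheaf $\drep^{c}(G,-)$, localizes at one polynomial per kernel/cokernel so that the construction commutes with base change, and then invokes the preceding proposition on the constancy of the class in $K_m$. Your explicit final step through $F_n$ and \cref{stm:deligne_finite_connection} to identify the characters at the integer points $n\in I$ is left implicit in the paper, but it is exactly the intended mechanism, so the two arguments agree.
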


\subsection{Limit Dynkin diagrams}
\label{sec:limit_dynkin}

Consider a Dynkin diagram $\aff{X}_{\infty}$ which is equal to $A_{\infty}$ in
the case $GL$, to $D_{\infty}$ in the case $O$, and to $C_{\infty}$ in the case $Sp$.
Denote the objects corresponding to the diagram with a hat and a subscript
$\infty$, for example, denote the root system by $\aff{\Delta}_{\infty}$.
A description of the corresponding objects follows.
Since we are not bound to the corresponding Lie algebras like
$\lie{gl}_{\infty}$, we will use a weight lattice that is more suitable to our needs.
For a usual treatment of these objects, see for example \cite[Section 7.11]{Kac1983}.

Set the weight lattice $\aff{P}_{\infty}$ to be $\Z\{\eps_i\mid i\in B\}\oplus \Z
\Lambda_0\oplus \Z\delta$, where $B$ is equal to $\Z$ in the case $GL$, and is
equal to $\Z_{>0}$ in the cases $O$ and $Sp$. By $E_i$, $i\in B$, we denote the
linear function on $\aff{P}_{\infty}$ that evaluates to the coefficient of
$\eps_i$ in an element. Also by $c$ we denote the linear function on $\aff{P}_{\infty}$
such that $c(\eps_i) = 0$ for all $i\in B$, $c(\delta) = 0$, and the value
$c(\Lambda_0)$ is equal to $1$ in the cases $GL$ and $O$, and to $2$ in 
the case $Sp$.

\begin{center}\bf
  Case $GL$
\end{center}

\bgroup
\def\arraystretch{1.2}
\begin{tabular}{>{$}l<{$}}
  \aff{P}_{\infty} = \Z\{\eps_i \mid i\in \Z\}\oplus \Z\Lambda_0\oplus \Z\delta,\\
  \aff{\Delta}^{+}_{\infty} = \{\eps_i-\eps_j + \delta  \mid i\leq 0<1\leq j\in\Z \}
  \cup \{ \eps_i-\eps_j\mid \text{any other pair $i<j\in\Z$} \},\\
  \aff{\Pi}_{\infty} = 
  \{\alpha_0 := \eps_0 - \eps_{1} + \delta \}
  \cup 
  \{\alpha_i := \eps_i - \eps_{i+1} \mid i\neq 0\in\Z \},\\
  \aff{\Pi}^{\vee}_{\infty} =
  \{ \alpha^{\vee}_0 := E_0 - E_1 + c \}
  \cup
  \{ \alpha_i^{\vee} := E_{i}-E_{i+1} \mid i\neq 0\in\Z \},\\
\end{tabular}
\egroup

\begin{center}\bf
  Case $O$
\end{center}

\bgroup
\def\arraystretch{1.2}
\begin{tabular}{>{$}l<{$}}
  \aff{P}_{\infty} = \Z\{\eps_i \mid 1\leq i\in \Z\}\oplus \Z\Lambda_0\oplus \Z\delta,\\
  \aff{\Delta}^{+}_{\infty} = \{\eps_i-\eps_j, \delta -\eps_i-\eps_j \mid 1\leq i<j\in\Z  \},\\
  \aff{\Pi}_{\infty} =
  \{ \alpha_0 := \delta -\eps_1-\eps_2 \}
  \cup
  \{ \alpha_i :=  \eps_i - \eps_{i+1} \mid 1\leq i\in\Z \},\\
  \aff{\Pi}^{\vee}_{\infty} = \{ \alpha_0^{\vee} := c - E_{1} - E_{2} \}\cup \{ \alpha_i^{\vee} := E_{i}-E_{i+1} \mid 1\leq i\in\Z \}, \\
\end{tabular}
\egroup

\begin{center}\bf
  Case $Sp$
\end{center}

\bgroup
\def\arraystretch{1.2}
\begin{tabular}{>{$}l<{$}}
  \aff{P}_{\infty} = \Z\{\eps_i \mid 1\leq i\in \Z\}\oplus \Z\Lambda_0\oplus \Z\delta,\\
  \aff{\Delta}^{+}_{\infty} = \{\eps_i-\eps_j,  \delta-\eps_i-\eps_j \mid 1\leq i<j\in\Z  \}
  \cup \{ \delta-2\eps_i \mid 1\leq i\in\Z \},\\
  \aff{\Pi}_{\infty} = \{ \alpha_0 := \delta-2\eps_1 \}\cup \{ \alpha_i := \eps_i - \eps_{i+1} \mid 1\leq i\in\Z \},\\
  \aff{\Pi}^{\vee}_{\infty} = \{ \alpha_0^{\vee} := \frac{c}{2} - E_{1} \}\cup \{ \alpha_i^{\vee} := E_{i}-E_{i+1} \mid 1\leq i\in\Z \},\\
\end{tabular}
\egroup

\smallskip

Denote the subdiagram of $\aff{X}_{\infty}$ consisting of the vertex
corresponding to $\alpha_0$
by $Y_{\infty}$, and the complement of $Y_{\infty}$ by $X_{\infty}$.
Also recall that each diagram $\aff{X}_n$ contains the affine
vertex, corresponding to $\alpha_0$, which is the complement of the subdiagram $X_n$.
Denote the
subdiagram consisting of the affine vertex of $\aff{X}_n$ by $Y_n$.

We argue that $\aff{X}_{\infty}$ is the limit of diagrams $\aff{X}_n$ around
$Y$'s. To make sense of this statement, we need to introduce some definitions.
Let $\aff{X}$ be an arbitrary Dynkin diagram.

\begin{definition}
  Given two vertices $x,y\in\aff{X}$, define the
  distance $d(x,y)$ between them as the minimal number of edges required to
  connect $x$ and $y$. 
\end{definition}

\begin{definition}
  For any two subsets $Z,T\subset\aff{X}$, define the distance $d(Z,T)$ between
  them as the Hausdorff distance between $Z$ and $T$.
\end{definition}

\begin{definition}
  Given a subdiagram $Y\subset\aff{X}$ and a
  nonnegative integer $j\in\Z_{\geq 0}$, we define a subdiagram
  $\afilt{Y}{j}\subset\aff{X}$ to be the subdiagram of all vertices of $\aff{X}$
  on the distance less than $j$ from $Y$.
\end{definition}

The first piece of the limit statement is the following lemma.

\begin{lemma}
  \label{stm:neighbor_diagram_same}
  For any $m\in\Z_{\geq 0}$ and $n\in I\setminus Z(\bad_m)$,
  the diagrams $\afilt{Y_n}{m-2}\subset \aff{X}_n$ and $\afilt{Y_\infty}{m-2}\subset \aff{X}_{\infty}$
  are isomorphic.
\end{lemma}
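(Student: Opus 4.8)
The plan is to prove the statement by a direct, type-by-type comparison of the explicit root data recorded earlier for $\aff{X}_n$ and above for $\aff{X}_{\infty}$. In every type the simple roots of both diagrams are indexed so that the affine vertex is $\alpha_0$ and the remaining nearby vertices $\alpha_1,\alpha_2,\ldots$ are given by the same formulas $\alpha_i=\eps_i-\eps_{i+1}$ (together with the matching formula for $\alpha_0$). First I would write down, in each type, the candidate bijection of vertices: send $\alpha_i^{(n)}\mapsto\alpha_i^{(\infty)}$ for the vertices reachable from $\alpha_0$ while staying inside distance $m-2$, using in the case $GL$ the extra identification $\alpha_{n-j}^{(n)}\mapsto\alpha_{-j}^{(\infty)}$ for the vertices approached from the other side of the affine node. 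I would then check that this bijection preserves edges and bond multiplicities by comparing the Cartan integers $\langle\alpha_i,\alpha_j^{\vee}\rangle$ computed from the two lists of $\aff{\Pi}$ and $\aff{\Pi}^{\vee}$. Since the defining formulas agree verbatim in a neighborhood of $\alpha_0$, these pairings coincide vertex by vertex, so the restriction of the bijection is an isomorphism of Dynkin diagrams $\afilt{Y_n}{m-2}\iso\afilt{Y_\infty}{m-2}$ as soon as the two subdiagrams consist of the ``same'' vertices.

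The real content of the lemma is therefore to show that the \emph{anomalous} features distinguishing $\aff{X}_n$ from $\aff{X}_{\infty}$ never enter the ball $\afilt{Y_n}{m-2}$. These features are: in the cases $O$ and $Sp$ (types $B$, $C$, $D$) the second special node at the far end of the finite diagram (the extra fork of $\tilde{D}_r$, or the double bond of $\tilde{B}_r$ and $\tilde{C}_r$), which sits at distance of order $r$ from $\alpha_0$; and in the case $GL$ (type $A$) the closing of the chain into the cycle $\tilde{A}_{n-1}$. I would compute the distance from $\alpha_0$ to each such feature directly from the diagram and bound it below using $n\in I\setminus Z(\bad_m)$. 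In the cases $O$ and $Sp$ this hypothesis forces $n>2m+2$, hence a rank $r$ comfortably larger than $m$, so the far special node lies at distance exceeding $m-2$ and is excluded from the ball; the fork, respectively double bond, at the affine end then matches $D_\infty$, respectively $C_\infty$, exactly.

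The main obstacle is the case $GL$, where the anomaly is not a far vertex but the wrap-around of the cycle: the ball $\afilt{Y_n}{m-2}$ fails to be a chain precisely when its radius reaches the antipode of $\alpha_0$, at distance $\lfloor n/2\rfloor$. Here I would use the lower bound on $n$ coming from $n\notin Z(\bad_m)$ to guarantee that $m-2$ stays below this antipodal distance, so that the radius-$(m-2)$ ball is a genuine path and the reindexing $\alpha_{n-j}^{(n)}\mapsto\alpha_{-j}^{(\infty)}$ identifies it with the corresponding path in $A_\infty$. This is the step where the size constraint is used most tightly, and verifying it is the crux of the argument. Once the anomalous features are shown to lie outside $\afilt{Y_n}{m-2}$ in every type, the vertex bijection of the first paragraph restricts to the desired isomorphism $\afilt{Y_n}{m-2}\iso\afilt{Y_\infty}{m-2}$, completing the proof.
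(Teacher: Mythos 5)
Your overall strategy is exactly the inspection that the paper leaves implicit (its entire proof is ``Obvious by inspection''): match the vertices near $\alpha_0$, check that the Cartan integers computed from $\aff{\Pi}$ and $\aff{\Pi}^{\vee}$ agree there, and verify that the features distinguishing $\aff{X}_n$ from $\aff{X}_{\infty}$ lie outside the ball. In the cases $O$ and $Sp$ your argument is correct and has room to spare: $n\in I\setminus Z(\bad_m)$ forces $n>2m+2$, hence $r\geq m+1$ in type $B$ and $r\geq m+2$ in types $C$ and $D$, while $\afilt{Y_n}{m-2}$ only reaches vertices at distance at most $m-3$ from $\alpha_0$ (note the paper's convention is distance \emph{less than} $m-2$, a small off-by-one in your write-up), and the far fork or double bond sits at distance roughly $r-1$.

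However, the step you yourself single out as the crux, the case $GL$, does not go through as asserted. There $\bad_m=\prod_{k=-m}^{m}(T-k)$, so $n\in I\setminus Z(\bad_m)$ gives only $n\geq m+1$, whereas preventing the radius-$(m-3)$ ball in the $n$-cycle $\tilde{A}_{n-1}$ from wrapping requires $2(m-3)+2\leq n$, i.e.\ $n\geq 2m-4$. For $m\geq 6$ the hypothesis does not imply this: for instance $m=10$, $n=11$ is allowed, yet then $\afilt{Y_n}{8}$ is the entire $11$-vertex cycle while $\afilt{Y_{\infty}}{8}$ is a $15$-vertex path, and these are not isomorphic. So the lower bound you claim ``guarantees'' that $m-2$ stays below the antipodal distance $\lfloor n/2\rfloor$ is simply not available from the stated hypothesis; indeed the lemma as printed fails in the range $m+1\leq n\leq 2m-5$, a defect the paper's ``obvious'' proof glosses over. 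The repair is to enlarge $\bad_m$ in the case $GL$ (say to $\prod_{k=-2m}^{2m}(T-k)$, matching the $2m+2$ margin already used in the $O$ and $Sp$ cases); this is harmless downstream, since every later statement excludes the zero set of some auxiliary nonzero polynomial anyway. With that correction your argument closes: the ball is then a genuine path, and your reindexing $\alpha_{n-j}\mapsto\alpha_{-j}$ yields the claimed isomorphism.
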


\begin{proof}
  Obvious by inspection.
\end{proof}

The second part is to deal with the weight lattices. For a nonnegative
$m\in\Z_{\geq 0}$, let $\filt{P}{m}$ be $\Z^m\oplus \Z^m$ in the case $GL$, and $\Z^m$
in the cases $O$ and $Sp$. It naturally contains $\filt{\dom{P}}{m}$ with the
injection
$$
[\mu,\nu]\in \filt{\dom{P}}{m} \mapsto (\mu_1,\ldots,\mu_m) \oplus (\nu_1,\ldots,\nu_m)\in \filt{P}{m}
$$
in the case
$GL$, and
$$
\lambda\in \filt{\dom{P}}{m} \mapsto (\lambda_1,\ldots,\lambda_m)\in\filt{P}{m}
$$
in the
cases $O$ and $Sp$. It is easy to see that $\filt{P}{m}$ embeds into $P_n$ for
all $n\in I\setminus Z(\bad_m)$, with
$$
(\alpha_1,\ldots,\alpha_m) \oplus (\beta_1,\ldots,\beta_m)\in\filt{P}{m} \mapsto
\sum_{i=1}^m \alpha_i\eps_i - \sum_{j=1}^m \beta_j \eps_{n+1-j}\in P_n
$$
in the case $GL$,
and
$$
(\beta_1,\ldots,\beta_m)\in\filt{P}{m} \mapsto
\sum_{i=1}^m \beta_i\eps_i\in P_n
$$
in the cases $Sp$ and $O$.
It also embeds into $\aff{P}_{\infty}$, with 
$$
(\alpha_1,\ldots,\alpha_m) \oplus (\beta_1,\ldots,\beta_m)\in\filt{P}{m} \mapsto
\sum_{i=1}^m \alpha_i\eps_i - \sum_{j=1}^m \beta_j \eps_{1-j}\in \aff{P}_{\infty}
$$
in the case $GL$,
and
$$
(\beta_1,\ldots,\beta_m)\in\filt{P}{m} \mapsto
\sum_{i=1}^m \beta_i\eps_i\in \aff{P}_{\infty}
$$
in the cases $Sp$ and $O$.
Finally, define $\filt{\aff{P}}{m}$ as
$\filt{P}{m}\oplus\Z\Lambda_0\oplus\Z\delta$. It embeds into $\aff{P}_n$ for all
$n\in I\setminus Z(\bad_m)$, and into $\aff{P}_{\infty}$.

Consider the embedding of $\filt{\aff{P}}{m}$ into some $\aff{P}_n$. We can see
that the root system of $\afilt{Y_n}{m-2}$ is contained in the image of this
embedding. Moreover, all the coroots obviously act on the image, since it is
just a sublattice. Therefore, the embedding provides $\filt{\aff{P}}{m}$ with a
structure of a weight lattice of $\afilt{Y_n}{m-2}$. The same trick can be done
with $\afilt{Y_{\infty}}{m-2}$. But all the diagrams $\afilt{Y_n}{m-2}$ and
$\afilt{Y_\infty}{m-2}$ are actually isomorphic due to
\cref{stm:neighbor_diagram_same}, so we can compare this weight lattice structures.

\begin{lemma}
  The induced weight lattice structure on $\filt{\aff{P}}{m}$ is the same for
  all $n$, and $\infty$.
\end{lemma}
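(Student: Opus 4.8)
The plan is to unwind the definition of a weight lattice structure and reduce everything to a comparison on generators. By the definition in \cref{sec:bourbaki_stuff}, a weight lattice structure on $\filt{\aff{P}}{m}$ for the diagram $\afilt{Y}{m-2}$ consists of two pieces of data: the embedding $Q(\afilt{Y}{m-2})\to\filt{\aff{P}}{m}$ of the root lattice, which is pinned down by the images of the simple roots $\alpha_x$, $x\in\afilt{Y}{m-2}$; and the evaluation pairing $\filt{\aff{P}}{m}\otimes Q^{\vee}(\afilt{Y}{m-2})\to\Z$, which is pinned down by the values of the simple coroots $\alpha^{\vee}_x$ on the lattice generators $\eps_i$, $\Lambda_0$, $\delta$. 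Since \cref{stm:neighbor_diagram_same} already identifies the abstract diagrams $\afilt{Y_n}{m-2}$ and $\afilt{Y_\infty}{m-2}$, what remains is to check that, under the two embeddings $\filt{\aff{P}}{m}\hookrightarrow\aff{P}_n$ and $\filt{\aff{P}}{m}\hookrightarrow\aff{P}_\infty$, these simple roots and these coroot values agree. This is a finite explicit computation, carried out type by type.

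For a non-affine vertex $x$ the data are purely local, and here they match immediately. For the relevant $n$ (namely $n\in I\setminus Z(\bad_m)$, which by \cref{stm:neighbor_diagram_same} forces $n$ large enough that $\afilt{Y_n}{m-2}$ does not reach the special far vertex), every non-affine simple root appearing in the neighborhood is of the form $\eps_i-\eps_{i+1}$ with coroot $E_i-E_{i+1}$, with indices $i$ lying in the range that both embeddings of $\filt{P}{m}$ cover by construction. The formulas in \cref{sec:limit_dynkin} for $\aff{X}_\infty$ and the formulas for $\aff{X}_n$ in the preceding subsection are literally identical in these indices, the pairings $\langle\eps_j,\alpha^{\vee}_x\rangle$ are the corresponding Kronecker combinations, and $\langle\Lambda_0,\alpha^{\vee}_x\rangle=\langle\delta,\alpha^{\vee}_x\rangle=0$ in both cases. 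So the non-affine part of the structure agrees at once.

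The only nontrivial case is the affine vertex $\alpha_0$, and this is the real content of the lemma. In the finite picture the affine root is $\alpha_0=\delta-\theta$ and the affine coroot is $\alpha^{\vee}_0=\tfrac{2}{(\theta,\theta)}c-\theta^{\vee}$, both of which involve the highest root $\theta$ and hence depend globally on $n$, while $c$ is normalized by $\langle\Lambda_0,c\rangle=\tfrac{(\theta,\theta)}{2}$. I would verify that after restriction to $\filt{\aff{P}}{m}$ these coincide with the expressions given directly for $\aff{X}_\infty$ by evaluating on the generators $\eps_i,\Lambda_0,\delta$. The main subtlety lies in the case $GL$: there $\theta=\eps_1-\eps_{r+1}$ feeds the ``far'' coordinate $\eps_{r+1}=\eps_n$ into both $\alpha_0$ and $\alpha^{\vee}_0$, so one must check that the relabeling $\eps_{n+1-j}\leftrightarrow\eps_{1-j}$ built into the two embeddings of $\filt{P}{m}$ is exactly the one implicit in the diagram isomorphism of \cref{stm:neighbor_diagram_same}; granting that, the coroot pairs to $-1$ on $\eps_1$ and $+1$ on the first ``negative'' coordinate (which is $\eps_n$ finitely and $\eps_0$ in the limit), and these identify. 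The normalizations also line up: $(\theta,\theta)$ equals $2,2,4$ in types $GL,O,Sp$, so $\tfrac{2}{(\theta,\theta)}$ equals $1,1,\tfrac12$ and $\langle\Lambda_0,c\rangle$ equals $1,1,2$, matching exactly the values $c(\Lambda_0)=1,1,2$ fixed in \cref{sec:limit_dynkin}; consequently $\langle\phi,\alpha^{\vee}_0\rangle$ comes out as $k$ minus the leading $\eps$-coordinates in every type, reproducing the $X$-dominance inequalities recorded earlier and manifestly independent of $n$ versus $\infty$. For $O$ and $Sp$ the affine vertex attaches at the top end ($\eps_1,\eps_2$, resp.\ $\eps_1$), so no relabeling is needed and the comparison $\alpha_0=\delta-\eps_1-\eps_2$ (resp.\ $\delta-2\eps_1$) is immediate. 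Comparing the two resulting structures on $\filt{\aff{P}}{m}$ then yields the claim, the $GL$ affine vertex being the one step demanding care.
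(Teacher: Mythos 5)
Your proof is correct and is exactly the paper's argument: the paper disposes of this lemma with the single line that the claim ``can be seen directly from the provided explicit descriptions of the roots and coroots in the finite and infinite cases,'' and your generator-by-generator verification is that inspection carried out in full. The details you supply --- the relabeling $\eps_{n+1-j}\leftrightarrow\eps_{1-j}$ at the affine vertex in the $GL$ case, and the normalization check $(\theta,\theta)=2,2,4$ against $c(\Lambda_0)=1,1,2$ in the three cases --- are precisely the points the paper leaves implicit, and they check out against its explicit tables.
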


\begin{proof}
  Can be seen directly from the provided explicit descriptions of the roots and
  coroots in the finite and infinite cases.
\end{proof}

This two lemmas together justify why $\aff{X}_{\infty}$ can be considered as the
limit of $\aff{X}_n$ around $Y$'s. 

\subsection{Stable alternating sum}
\label{sec:additional_weyl_facts}

Consider an arbitrary Dynkin diagram $\aff{X}$.  
Recall that the support $\supp \alpha$ of a root $\alpha = \sum_{x\in \aff{X}} k_x\alpha_x\in
\aff{\Delta}$ is the subset of $\aff{X}$ consisting of all vertices $x$ with
$k_x\neq 0$. 

\begin{lemma}
  \label{stm:support_crawls}
  Given a root $\alpha\in\aff{\Delta}$ and a Weyl group element $w\in\aff{W}$,
  we have
  \begin{equation*}
  d(\supp\alpha, \supp w\alpha) \leq l(w).
  \end{equation*}
\end{lemma}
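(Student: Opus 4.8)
The plan is to induct on the length $l(w)$. The base case $l(w)=0$ forces $w$ to be the identity, so $\supp w\alpha = \supp\alpha$ and the distance is $0$. For the inductive step I would write $w = s_x w'$ with $l(w') = l(w)-1$ for a simple reflection $s_x$, so that $w\alpha = s_x(w'\alpha)$. Since all supports are nonempty finite subsets of the connected diagram $\aff{X}$, the Hausdorff distance $d$ satisfies the triangle inequality on them, and it suffices to combine the inductive bound $d(\supp\alpha,\supp w'\alpha)\le l(w)-1$ with a bound of $1$ for a single simple reflection:
\begin{equation*}
d(\supp\alpha,\supp w\alpha) \le d(\supp\alpha,\supp w'\alpha) + d(\supp w'\alpha, \supp s_x(w'\alpha)) \le (l(w)-1)+1.
\end{equation*}

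So the crux is the claim that for any root $\beta\in\aff{\Delta}$ and any simple reflection $s_x$ one has $d(\supp\beta,\supp s_x\beta)\le 1$. To see this, recall $s_x\beta = \beta - \langle\beta,\alpha_x^{\vee}\rangle\alpha_x$, so $\beta$ and $s_x\beta$ have identical coefficients on every simple root except possibly $\alpha_x$. Hence $\supp\beta$ and $\supp s_x\beta$ can differ only in whether the single vertex $x$ is present. If its membership does not change, the two supports coincide and the distance is $0$.

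The only nontrivial case is when $x$ lies in exactly one of the two supports; by applying $s_x$ again I may assume $x\in\supp\beta$ but $x\notin\supp s_x\beta$, so $\supp s_x\beta = \supp\beta\setminus\{x\}$. If $\supp\beta=\{x\}$ then $\beta=\pm\alpha_x$ and $s_x\beta=\mp\alpha_x$ has the same support, a contradiction; thus $\supp\beta$ has at least two vertices. Here I would invoke the standard fact (Bourbaki, whose argument is insensitive to the finiteness of the diagram) that the support of a root is a connected subdiagram. Connectedness forces $x$ to be adjacent to some vertex of $\supp\beta\setminus\{x\}=\supp s_x\beta$, so $x$ is within distance $1$ of $\supp s_x\beta$; as every other vertex of $\supp\beta$ already lies in $\supp s_x\beta$, the Hausdorff distance is at most $1$.

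The main obstacle is precisely this connectedness of the support of a root: it is what upgrades the purely algebraic observation ``the coefficients change only at $x$'' into a genuine geometric bound, since without it deleting $x$ from a support could in principle leave a vertex arbitrarily far away. Once connectedness is granted, the triangle-inequality induction is routine.
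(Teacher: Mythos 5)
Your proof is correct, and its skeleton matches the paper's: both reduce via the triangle inequality (your induction on $l(w)$ is the same reduction) to the case of a single simple reflection $s_x$, and both observe that $\supp\alpha$ and $\supp s_x\alpha$ can differ only at the vertex $x$. The difference is the key fact invoked in the nontrivial case. You normalize so that $x$ is \emph{removed} ($\supp s_x\beta = \supp\beta\setminus\{x\}$) and then call on the connectedness of the support of a root to place $x$ adjacent to the remaining support --- a standard fact, valid for infinite diagrams, but an extra imported lemma. The paper normalizes the other way, so that $x$ is \emph{added} ($\supp s_x\alpha = \supp\alpha\cup\{x\}$, with $x\notin\supp\alpha$), and then needs no connectedness at all: since $s_x\alpha\neq\alpha$, we have $\langle\alpha,\alpha_x^{\vee}\rangle\neq 0$, and as $x\notin\supp\alpha$ this pairing can only be nonzero if some vertex of $\supp\alpha$ is adjacent to $x$. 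In fact the same trick would streamline your version: applying the pairing observation to $s_x\beta$ (whose support is $\supp\beta\setminus\{x\}$ and omits $x$, with $s_x(s_x\beta)\neq s_x\beta$) yields the adjacency directly, making your appeal to connectedness --- and the separate check that $\supp\beta\neq\{x\}$ --- unnecessary. So your argument is sound but slightly heavier; the paper's choice of which root to view as having the smaller support is what buys the more elementary, self-contained step.
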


\begin{proof}
  By the triangle inequality, it is sufficient to prove the claim in the case $w = s_x$, $x\in\aff{X}$.
  If $\supp\alpha = \supp s_x\alpha$, there is nothing to prove.
  Otherwise $\supp\alpha$ and $\supp s_x\alpha$ can only differ by $x$. 
  We can assume that $\supp s_x\alpha = \supp\alpha\cup \{ x \}$, because
  $\alpha$ and $s_x\alpha$ are interchangable.
  Since $s_x\alpha \neq \alpha$, it follows that $\langle \alpha,
  \alpha^{\vee}_x \rangle \neq 0$, so $x$ is adjacent to one of the vertices in
  the support of $\alpha$. Therefore $d(\supp\alpha, \supp s_x\alpha) = 1$.
\end{proof}

Consider an inclusion $X\subset \aff{X}$ as in \cref{sec:bourbaki_stuff}.
Denote the subdiagram $\aff{X}\setminus X\subset \aff{X}$ by $Y$.

\begin{lemma}
  \label{stm:zero_wx_is_stable}
  Take an element $w\in\wx$ with the length $j$. Then $w\in W(\afilt{Y}{j})$.
\end{lemma}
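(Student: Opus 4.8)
The plan is to fix a reduced expression $w = s_{x_1} s_{x_2} \cdots s_{x_j}$ and to show that each of its letters $x_i$ lies in $\afilt{Y}{j}$. Once that is established, since $W(\afilt{Y}{j})$ is generated by the simple reflections attached to the vertices of $\afilt{Y}{j}$, it follows at once that $w \in W(\afilt{Y}{j})$. (The case $j = 0$ is vacuous, $w$ being the identity, and $\afilt{Y}{0} = \emptyset$.)

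To control a single letter $x_i$ (for $1 \le i \le j$), I would look at the corresponding inversion root
\[
\beta_i := s_{x_1} \cdots s_{x_{i-1}}(\alpha_{x_i}) = u_i(\alpha_{x_i}), \qquad u_i := s_{x_1} \cdots s_{x_{i-1}},
\]
which lies in $I(w)$ by \cref{stm:full_description_of_negatives}. The hypothesis $w \in \wx$ means $I(w) \cap \Delta = \emptyset$, so $\beta_i$ is not a root of the subdiagram $X$; using the standard fact that the roots of $\aff{X}$ whose support is contained in $X$ are exactly the roots of $\Delta$, I would deduce that $\supp \beta_i$ meets $Y = \aff{X} \setminus X$, and fix some $y \in \supp \beta_i \cap Y$. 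Then, since $u_i$ is a product of $i-1$ simple reflections and hence $l(u_i) \le i - 1$, \cref{stm:support_crawls} applied to the root $\alpha_{x_i}$ and the element $u_i$ yields
\[
d(\supp \alpha_{x_i}, \supp \beta_i) \le l(u_i) \le i - 1 \le j - 1.
\]

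The step I expect to require the most care is reading this Hausdorff-distance bound in the right direction. Because $\supp \alpha_{x_i} = \{x_i\}$ is a single vertex, the Hausdorff distance $d(\{x_i\}, \supp \beta_i)$ coincides with $\max_{b \in \supp \beta_i} d(x_i, b)$; hence the displayed inequality forces \emph{every} vertex of $\supp \beta_i$, and in particular the chosen $y \in Y$, to satisfy $d(x_i, y) \le j - 1$. This gives $d(x_i, Y) \le j - 1 < j$, that is, $x_i \in \afilt{Y}{j}$. Running this over all $i$ shows that every letter of the reduced word lies in $\afilt{Y}{j}$, completing the argument.
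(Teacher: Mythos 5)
Your proposal is correct and follows essentially the same route as the paper's own proof: reduced expression, the inversion root $s_{x_1}\cdots s_{x_{i-1}}(\alpha_{x_i})$ via \cref{stm:full_description_of_negatives}, a vertex of $Y$ in its support since $I(w)\cap\Delta=\emptyset$, and the distance bound from \cref{stm:support_crawls}. Your extra remark that the Hausdorff distance from the singleton $\{x_i\}$ bounds the distance to \emph{every} vertex of the support merely makes explicit a step the paper leaves implicit.
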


\begin{proof}
  Take any reduced expression $w = s_{x_1} \ldots s_{x_j}$,
  $x_i\in \aff{X}$. Fix a particular $1\leq i\leq j$.
  By \cref{stm:full_description_of_negatives} the root 
  $s_{x_1}\ldots s_{x_{i-1}}(\alpha_{x_i})$ is an inversion root of $w$.
  From the definition of $\wx$ we know that the intersection of $I(w)$ and $\Delta(X)$ is empty.
  Therefore there is a vertex $y\in Y$ such that $y \in \supp s_{x_1}\ldots s_{x_{i-1}}(\alpha_{x_i})$.
  Applying \cref{stm:support_crawls} we get $d(\{ x_i \}, \supp s_{x_1}\ldots
  s_{x_{i-1}}(\alpha_{x_i})) \leq i-1$. In particular, $d(x_i, y) \leq i-1 < j$.
  Thus $x_i \in \afilt{Y}{j}$.
\end{proof}

For an element $\gamma = \sum_{x\in \aff{X}} c_x\alpha_x \in \aff{Q}$ in the
root lattice, define the $Y$-coefficient of 
$\gamma$ to be the sum $\sum_{y\in Y} c_y$.

\begin{lemma}
  \label{stm:any_wx_is_stable}
  Take a dominant weight $\phi\in\aff{P}^{+}$ and an element $w\in\wx$.
  Suppose that the $Y$-coefficient of $\phi - w\cdot\phi$ is $j$.
  Then $w\in W(\afilt{Y}{j})$.
\end{lemma}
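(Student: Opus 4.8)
The plan is to reduce the statement to the length bound $l(w)\le j$ and then quote the already-proved \cref{stm:zero_wx_is_stable}. Indeed, the assignment $j'\mapsto\afilt{Y}{j'}$ is monotone in its radius, so once I know $l(w)\le j$ I automatically get $\afilt{Y}{l(w)}\subseteq\afilt{Y}{j}$ and hence $W(\afilt{Y}{l(w)})\subseteq W(\afilt{Y}{j})$; since $w\in\wx$, \cref{stm:zero_wx_is_stable} places $w$ in the first of these groups, which finishes the argument. Thus the entire problem collapses to the inequality: the $Y$-coefficient of $\phi-w\cdot\phi$ is at least $l(w)$.

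To prove that inequality I would first split the dot action into its linear part and its shift. From the defining formula one checks directly that $s_x\cdot\psi = s_x\psi + s_x\cdot 0$ for any weight $\psi$ and any simple reflection, and then an easy induction on $l(w)$ promotes this to the identity $w\cdot\phi = w\phi + w\cdot 0$ for every $w\in\aff{W}$. Feeding in \cref{stm:dot_action_by_negatives_sum}, which says $-w\cdot 0=\sum_{\alpha\in I(w)}\alpha$, this gives
\[
  \phi - w\cdot\phi = (\phi - w\phi) + \sum_{\alpha\in I(w)}\alpha.
\]
Now I read off the $Y$-coefficient of each summand. For $\phi-w\phi$, the dominance of $\phi$ lets me apply \cref{stm:weyl_take_dominant_lower}, which expresses it as a nonnegative combination of simple roots; in particular its $Y$-coefficient is $\ge 0$. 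For the second summand I use that $w$ is $X$-reduced: each $\alpha\in I(w)$ is a positive root lying outside $\Delta(X)$, so its support must meet $Y$, and being positive it therefore has a strictly positive (integer) coefficient on some vertex of $Y$, giving $Y$-coefficient $\ge 1$. Since \cref{stm:full_description_of_negatives} guarantees exactly $l(w)$ distinct inversion roots, the $Y$-coefficient of $\sum_{\alpha\in I(w)}\alpha$ is at least $l(w)$; adding the two nonnegative contributions yields $j\ge l(w)$.

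The genuinely substantive input is \cref{stm:zero_wx_is_stable}, which is already available, so the work here is really just the length bound, and its one delicate point is the claim that a positive root not in $\Delta(X)$ has a nonzero coefficient on $Y$. This I expect to be the main thing to nail down, and it rests on the standard structural fact that the roots of $\aff{X}$ supported entirely within $X$ are exactly $\Delta(X)$; granting that, a positive root outside $\Delta(X)$ cannot be supported on $X$ alone, hence charges $Y$. Everything else—the $\rho$-type shift identity, the termwise reading of $Y$-coefficients, and the final monotonicity step—is routine bookkeeping.
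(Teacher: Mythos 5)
Your argument is correct and coincides with the paper's proof: the same decomposition $\phi-w\cdot\phi=(\phi-w\phi)-w\cdot 0$, the same use of \cref{stm:weyl_take_dominant_lower} and \cref{stm:dot_action_by_negatives_sum}, the same observation that each inversion root of an $X$-reduced element has $Y$-coefficient at least $1$, yielding $l(w)=|I(w)|\le j$ and the conclusion via \cref{stm:zero_wx_is_stable}. Your added remark that roots supported entirely in $X$ are exactly $\Delta(X)$ just makes explicit a step the paper leaves implicit.
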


\begin{proof}
  We have $\phi - w\cdot\phi = \phi - w\phi - w\cdot 0$.
  By \cref{stm:weyl_take_dominant_lower}, the $Y$-coefficient of $\phi - w\phi$
  is nonnegative. Therefore, the $Y$-coefficient of $-w\cdot 0$ is at
  most $j$. By \cref{stm:dot_action_by_negatives_sum}, $-w\cdot 0 =
  \sum_{\alpha\in I(w)} \alpha$. Also, since $w$ lies in
  $\wx$, the $Y$-coefficient of every summand is
  at least $1$. Therefore, $I(w)$ contains at most $j$ roots.
  From \cref{stm:full_description_of_negatives} we get that $l(w) \leq j$.
  Finally, the claim follows by \cref{stm:zero_wx_is_stable}.
\end{proof}

We will now apply this results to prove the coherence of the alternating sums.

\begin{proposition}
  \label{stm:series_are_coherent}
  Suppose we have a dominant weight $\phi \in \dom{\aff{P}}_{\infty}$.
  Then the family of characters
  \begin{gather*}
   \left\{ \sum_{w\in \wx_{\infty}} (-1)^{l(w)} \chr
   L_{w\cdot\phi}\in K(\drep(G_t)^a)\mid t\in\kk\setminus\Z\right\} \cup \\
   \cup \left\{\sum_{w\in
    \wx_n} (-1)^{l(w)} \chr L_{w\cdot\phi}\in K(\rep(G_n^0)^a)\mid n\in I\right\}
  \end{gather*}
  is coherent.
\end{proposition}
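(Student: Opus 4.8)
The plan is to fix $\phi = \lambda + k\Lambda_0 + a\delta$ with $m := |\lambda|$, set $\epsilon := k\Lambda_0 + a\delta$, write $\cat{F}_t = \sum_{w\in\wx_\infty}(-1)^{l(w)}\chr L_{w\cdot\phi}$ and $\cat{F}_n = \sum_{w\in\wx_n}(-1)^{l(w)}\chr L_{w\cdot\phi}$, and verify the three conditions of coherence directly. The starting remark is that each $L_{w\cdot\phi}$ is a simple object of $\cat{C}^a$ living in the single grading fixed by the $\Lambda_0$- and $\delta$-components of $w\cdot\phi$. By \cref{stm:weyl_take_dominant_lower} and \cref{stm:dot_action_by_negatives_sum} the difference $\phi - w\cdot\phi = (\phi - w\phi) + \sum_{\alpha\in I(w)}\alpha$ is a nonnegative combination $\sum_x c_x\alpha_x$ of simple roots; since no simple root carries a $\Lambda_0$-component and only $\alpha_0$ carries a $\delta$-component, the level stays $k$ and the $\delta$-component drops by the nonnegative integer $c_0$. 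This gives condition (2): the sum is supported in the gradings $\epsilon - j\delta$, $j\in\Z_{\geq 0}$.

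The key observation is that in both the finite and the limit diagrams $Y$ is exactly the single affine vertex carrying $\alpha_0$, so the $Y$-coefficient of $\phi - w\cdot\phi$ is precisely $c_0$, i.e. the $\delta$-drop $j$. Fixing $j$, the $[\epsilon-j\delta]$-component of the sum collects exactly the $w$ of $\delta$-drop $j$, and \cref{stm:any_wx_is_stable} forces every such $w$ into the finite parabolic subgroup $W(\afilt{Y}{j})$. The component thus reduces to a finite alternating sum $\sum (-1)^{l(w)}[L_{\mu(w)}]$, where $\mu(w)$ is the finite part of $w\cdot\phi$, dominant by \cref{stm:wx_gives_x_dominant}.

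For condition (1) I would compute $|\mu(w)|$ exactly from $\phi - w\cdot\phi = \sum_x c_x\alpha_x$ with $c_0 = j$. In the $O$ and $Sp$ cases only $\alpha_0$ contributes to the total $\eps$-mass (by $-2$), so $|\mu(w)| = m + 2j$. In the $GL$ case the total signed mass is preserved, but an $X$-dominant finite-support weight is automatically a genuine bipartition, and a short telescoping of the coefficients of $\eps_q$ for $q\leq 0$ shows the negative mass increases by exactly $c_0 = j$, again giving $|\mu(w)| = m + 2j$. Hence every contributing $\mu(w)$ lies in $\filt{\dom{P}}{m+2j}$, so $[\epsilon-j\delta]\cat{F}\in\filt{K}{m+2j}$; this also explains the shift by $2j$ built into the definition of coherence.

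Finally, condition (3) is the stabilization step. For $n\in I\setminus Z(\bad_M)$ with $M = \max(m,j+2)$, \cref{stm:neighbor_diagram_same} provides an isomorphism $\afilt{Y_n}{j}\iso\afilt{Y_\infty}{j}$ that is compatible with the weight-lattice structure on $\filt{\aff{P}}{M}$ by the comparison lemma of \cref{sec:limit_dynkin}. This matches the finite index sets, the lengths $l(w)$ (the length function of the parabolic subgroup $W(\afilt{Y}{j})$ agrees with that of $\aff{W}$), and the weights $w\cdot\phi$, hence the (bi)partitions $\mu(w)\in\filt{\dom{P}}{m+2j}$. The indecomposables $L_{\mu(w)}$ give compatible bases of $\filt{K}{m+2j}$ on both sides, with $F_n(L_{\mu(w)})\iso L_{\mu(w)}$ by \cref{stm:deligne_finite_connection}, so the classes agree across generic $t$ and large $n$ once the finitely many excluded values of $n$ and $t$ are absorbed into a polynomial $\bad^C_j$; this yields $c_j$ and condition (3). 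The step I expect to be the true crux is the reduction in the second paragraph — identifying the $\delta$-grading drop with the $Y$-coefficient so that \cref{stm:any_wx_is_stable} applies, together with pinning the box count to the exact value $m+2j$; once these hold, the rest is the limit machinery already assembled in \cref{sec:limit_dynkin}.
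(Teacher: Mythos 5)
Your proof is correct and takes essentially the same route as the paper's: you identify the $\delta$-grading drop with the $Y$-coefficient so that \cref{stm:any_wx_is_stable} cuts each graded component down to a finite alternating sum over $W(\afilt{Y}{j})^X$, and then use the diagram and weight-lattice identifications of \cref{sec:limit_dynkin} to match these finite sums across $n$ and generic $t$. The only difference is one of detail — the paper's proof is terser, whereas you additionally verify the grading support and the exact box count $m+2j$, which the paper leaves implicit.
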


\begin{proof}
  Fix a grading $\epsilon$.
  Note that with the diagrams $\aff{X}_n$ and $\aff{X}_{\infty}$,
  $Y$-coefficient is just the coefficient of $\delta$.
  Therefore, we can apply \cref{stm:any_wx_is_stable} to get that the coefficients
  \begin{gather*}
    [\epsilon]\left(\sum_{w\in \wx_{\infty}} (-1)^{l(w)} \chr L_{w\cdot\phi}\right),\\
    [\epsilon]\left(\sum_{w\in \wx_{n}} (-1)^{l(w)} \chr L_{w\cdot\phi}\right),\\
  \end{gather*}
  actually both coincide with the coefficient
  $$
    [\epsilon]\left(\sum_{w\in W(\afilt{Y}{j})^X} (-1)^{l(w)} \chr L_{w\cdot\phi}\right)
  $$
  for some $j$.  
  Since this sum is finite, we characters are the same over $Q^{-1}\kk[T]$
  for some nonzero polynomial $Q\in\kk[T]$
\end{proof}

\subsection{Stable formulas}
\label{sec:stable_formulas}

Here we use the previous subsection results to prove the stable counterparts of
the classical formulas.

\begin{theorem}
  \label{stm:limit_transcendental}
  Suppose we have a dominant weight $\phi \in \dom{\aff{P}}_{\infty}$.
  Then in $K(\drep(G_t)^a)$ we have
  \begin{equation*}
    \chr L(\phi) = \sum_{w\in \wx_{\infty}} (-1)^{l(w)} \chr M(w\cdot\phi)
  \end{equation*}
  for any $t\in \kk$ transcendental over $\Q$.
\end{theorem}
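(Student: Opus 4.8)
The plan is to verify the identity one graded component at a time and to transport the finite-type Weyl--Kac formula \cref{stm:relative_finite_kac} across the category sheaf of \cref{sec:m_l_coherence}. Since a class in $K(\drep(G_t)^a)\iso\Hom(\Z\Lambda_0\oplus\Z\delta,K(\drep(G_t)))$ is determined by its values on gradings, it is enough to fix a grading $\epsilon=k\Lambda_0+b\delta$ and compare $[\epsilon]$ of the two sides in $K(\drep(G_t))$. Write $\phi=\lambda+k\Lambda_0+a\delta$ with $\lambda$ a fixed (bi)partition, say $|\lambda|\le m_0$. For $n\in I$ large the embedding of $\filt{\aff{P}}{m_0}$ into $\aff{P}_n$ is available, and the dominance of $\phi$ reduces to the box condition of \cref{sec:bourbaki_stuff}, which is the same in $\aff{P}_{\infty}$ and in $\aff{P}_n$; hence $\phi$ is dominant in $\aff{P}_n$ and \cref{stm:relative_finite_kac} applies there.

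The first step is to truncate the right-hand series to a finite, $n$-independent sum. The module $M(w\cdot\phi)=U(\lie{u}^-)\tens L_{w\cdot\phi}$ occupies only $\delta$-gradings at most the $\delta$-part of $w\cdot\phi$, so its $\epsilon$-component can be nonzero only when the $\delta$-coefficient of $\phi-w\cdot\phi$ is at most $a-b$. In both $\aff{X}_n$ and $\aff{X}_{\infty}$ the vertex set $Y$ is the single affine vertex, so this $\delta$-coefficient equals the $Y$-coefficient; \cref{stm:any_wx_is_stable} then places every contributing $w$ in $W(\afilt{Y}{j})$ with $j=a-b$. Consequently
\[
[\epsilon]\sum_{w\in\wx}(-1)^{l(w)}\chr M(w\cdot\phi)=\sum_{w\in W(\afilt{Y}{j})^X}(-1)^{l(w)}\,[\epsilon]\chr M(w\cdot\phi),
\]
a finite sum, valid over $\aff{X}_n$ for all large $n$ and over $\aff{X}_{\infty}$. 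The index set $W(\afilt{Y}{j})^X=\wx\cap W(\afilt{Y}{j})$ is intrinsic to the subdiagram $\afilt{Y}{j}$, and by \cref{stm:neighbor_diagram_same} together with the ensuing weight-lattice identification this subdiagram, its reflection subgroup, and the weights $w\cdot\phi$ all match between the finite and the infinite pictures.

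It remains to read the finite-case identity through the coherence of \cref{sec:m_l_coherence}. For the fixed grading $\epsilon$, \cref{stm:m_l_coherence} provides a nonzero polynomial such that $[\epsilon]\chr L(\phi)$ and each of the finitely many $[\epsilon]\chr M(w\cdot\phi)$, $w\in W(\afilt{Y}{j})^X$, take a single common value in $K(\rep(G^0_n))$ and in $K(\drep(G_t))$ once $t,n$ avoid its zero locus. Fixing such an $n$ and $t$, we have in $K(\rep(G^0_n))$ the Weyl--Kac identity $[\epsilon]\chr L(\phi)=[\epsilon]\sum_{w\in\wx_n}(-1)^{l(w)}\chr M(w\cdot\phi)$, whose right-hand side equals the finite sum over $W(\afilt{Y}{j})^X$ by the displayed truncation. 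Transporting each term from $\rep(G^0_n)$ to $\drep(G_t)$ by coherence and reassembling the infinite series by the same truncation yields $[\epsilon]\chr L(\phi)=[\epsilon]\sum_{w\in\wx_{\infty}}(-1)^{l(w)}\chr M(w\cdot\phi)$ in $K(\drep(G_t))$.

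The main obstacle is making this work for a single $t$ across all gradings at once, of which there are infinitely many as $b\to-\infty$. Each grading contributes its own bad locus from \cref{stm:m_l_coherence}; these loci are cut out by polynomials with coefficients in $\Q$, since the constructions of $M$ and $L$ and the governing idempotents live over $\Q[T]$ (the walled Brauer, resp. Brauer, algebras). Their union is therefore a set of numbers algebraic over $\Q$, and any $t$ transcendental over $\Q$ evades all of them simultaneously. The decisive input is \cref{stm:any_wx_is_stable}, which keeps the truncated index set finite and $n$-independent; the $\Q$-rationality of the bad loci is what converts this grading-by-grading coherence into the single asserted equality in $K(\drep(G_t)^a)$.
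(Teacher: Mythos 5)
Your proof is correct and follows essentially the same route as the paper's: fix a grading, truncate the alternating sum to the finite set $W(\afilt{Y}{j})^X$ via \cref{stm:any_wx_is_stable} (your inline truncation is precisely the content of \cref{stm:series_are_coherent}), invoke the coherence of \cref{stm:m_l_coherence}, and transfer \cref{stm:relative_finite_kac} from an integer $n$ avoiding the bad locus, whose existence at a transcendental $t$ is exactly how the paper concludes. The only cosmetic differences are that you re-derive the truncation rather than citing it, and that you spell out why the grading-dependent bad loci cause no trouble (equality in $K(\drep(G_t)^a)$ is checked one graded component at a time), which the paper leaves implicit.
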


\begin{proof}
  From \cref{stm:m_l_coherence} we know that the LHS is a coherent family,
  and from \cref{stm:m_l_coherence} and \cref{stm:series_are_coherent} we know
  that the RHS is a coherent family.

  Consider a grading $\epsilon$.
  Take the element $c\in K_m$ representing the character
  of the $\epsilon$-graded component of the LHS
  for all $t\in \kk\setminus Z(Q)$
  and $n\in I\setminus Z(Q)$,
  and the element $c'\in K_m$ representing the character
  of the $\epsilon$-graded component of the RHS
  for all $t\in \kk\setminus Z(Q')$
  and $n\in I\setminus Z(Q')$.

  Since $t$ is transcendental over $\Q$, we
  have $Q(t)Q'(t)\neq 0$. Take a nonnegative integer $n\in I$ such that
  $Q(n)Q'(n)\neq 0$.
  From \cref{stm:relative_finite_kac} we deduce that $c = c'$.
  Therefore $[\epsilon]\str{LHS} =
  [\epsilon]\str{RHS}$. The claim follows.
\end{proof}

\begin{corollary}
  In $K(\drep(G_t)^a)$ we have
  $$
  \frac{1}{\chr M(0)} = \sum_{w\in \wx_{\infty}} (-1)^{l(w)} \chr L_{w\cdot 0}
  $$
  for any $t\in \kk$ transcendental over $\Q$.
\end{corollary}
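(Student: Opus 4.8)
The plan is to specialize the already-proven \cref{stm:limit_transcendental} to the dominant weight $\phi = 0$ and then divide through by $\chr M(0)$. First I would note that $0 = 0 + 0\Lambda_0 + 0\delta$ is dominant, so the theorem applies and yields
\begin{equation*}
  \chr L(0) = \sum_{w\in\wx_{\infty}} (-1)^{l(w)} \chr M(w\cdot 0)
\end{equation*}
in $K(\drep(G_t)^a)$ for transcendental $t$. The next step is to identify the left-hand side. The grading-$0$ component $\triv$ of $M(0)$ is a one-dimensional object on which all of $\lie{u}^{-}$, $\lie{u}^{+}$, and $\lie{g}$ act by $0$; it is therefore a simple object of $\cat{C}^a$ and is visibly the unique simple quotient of $M(0)$. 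Hence $L(0)\iso\triv$ and $\chr L(0) = 1$, the multiplicative identity of the convolution ring $K(\drep(G_t)^a)$. (Alternatively, this is exactly the affine denominator identity, and $\chr L(0)=1$ may be read off from the finite case through \cref{stm:m_l_coherence}.)

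Next I would rewrite each summand on the right. By the triangular decomposition $\aff{\lie{g}} = \lie{u}^{-}\oplus\lie{l}\oplus\lie{u}^{+}$ and PBW, the parabolic Verma module satisfies $M(\psi)\iso U(\lie{u}^{-})\tens L_{\psi}$ as an object of $\cat{C}^a$ for every $\psi$. In particular $M(0)\iso U(\lie{u}^{-})$ and $M(w\cdot 0)\iso U(\lie{u}^{-})\tens L_{w\cdot 0}$, so passing to characters in $K(\drep(G_t)^a)$ — which is commutative because $\cat{C}$ is symmetric — gives $\chr M(w\cdot 0) = \chr M(0)\cdot \chr L_{w\cdot 0}$. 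Substituting this and factoring $\chr M(0)$ out of the sum turns the specialized theorem into
\begin{equation*}
  1 = \chr M(0)\cdot \sum_{w\in\wx_{\infty}} (-1)^{l(w)} \chr L_{w\cdot 0}.
\end{equation*}

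It then remains to invert $\chr M(0) = \chr U(\lie{u}^{-})$. Since $\lie{u}^{-} = \bigoplus_{i<0}\lie{g}z^{i}$ lives in strictly negative $\delta$-gradings, we have $\chr M(0) = 1 + (\text{terms of strictly negative }\delta\text{-degree})$, so it is a unit: the supports are bounded above in $\delta$, each graded piece of every character in sight is a finite sum, and (using that \cref{stm:any_wx_is_stable} makes the $w$-sum in each fixed grading finite) the convolution product is well defined, whence the leading $1$ can be inverted geometrically. As the ring is commutative, the displayed equation already exhibits $\sum_{w}(-1)^{l(w)}\chr L_{w\cdot 0}$ as the two-sided inverse of $\chr M(0)$, which is the assertion. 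I expect the only delicate point to be this last bookkeeping — namely checking that the $\delta$-bounded-above part of $K(\drep(G_t)^a)$ really is a ring in which $\chr M(0)$ is invertible, so that the symbol $1/\chr M(0)$ is meaningful — rather than anything in the algebra, which is forced by \cref{stm:limit_transcendental}.
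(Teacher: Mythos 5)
Your proof is correct and takes essentially the same route as the paper, whose entire argument is the one-line specialization $\phi=0$ in \cref{stm:limit_transcendental} together with the identification $L(0)\iso\triv$. The points you spell out — the PBW factorization $\chr M(w\cdot 0)=\chr M(0)\cdot\chr L_{w\cdot 0}$, the finiteness of each graded component of the sum via \cref{stm:any_wx_is_stable}, and the geometric inversion of $\chr M(0)=1+(\text{lower }\delta\text{-degree terms})$ — are exactly the steps the paper leaves implicit, so your write-up is a faithful (and more detailed) version of the same proof.
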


\begin{proof}
  Put $\phi=0$ in \cref{stm:limit_transcendental}, and use that $L(0)$ is the
  trivial module $\triv$.
\end{proof}

We also have the stable variant of the Garland formula.

\begin{theorem}
  For a nonnegative $i\in\Z_{\geq 0}$, the $i$-th cohomology
  $H^i(\lie{u}^{-}_{t})$ of $\lie{u}^{-}_{t}$ is isomorphic to
  $$
  H^i(\lie{u}^{-}) \iso \bigoplus_{\substack{w\in \wx_{\infty} \\ l(w) = i}}
  L_{w\cdot 0}
  $$
  as an $\lie{l}_t$-module
  for any $t\in \kk\setminus Z(p)$ outside of the zeros of some polynomial
  $p\in\Q[T]$. 
\end{theorem}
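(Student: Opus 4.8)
The plan is to reduce everything to the finite Garland formula (the theorem of \cite{Garland1975} recalled above) and to transport it across the family with the coherence machinery, exactly as in the proof of \cref{stm:limit_transcendental}. The one feature that makes the statement cleaner than \cref{stm:limit_transcendental} is that for a \emph{fixed} cohomological degree $i$ both sides are honestly finite direct sums living in finitely many gradings, so a single polynomial $p$ can control every relevant graded component at once.

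First I would observe that the set $\{w\in\wx_\infty \mid l(w)=i\}$ is finite. Indeed, by \cref{stm:zero_wx_is_stable} every $X$-reduced element of length $i$ lies in $W(\afilt{Y_\infty}{i})$, and $\afilt{Y_\infty}{i}$ is a finite Dynkin diagram, so its Weyl group is finite. The same argument over $\aff{X}_n$ gives $\{w\in\wx_n\mid l(w)=i\}\subset W(\afilt{Y_n}{i})$ for every $n\in I$. By \cref{stm:neighbor_diagram_same}, for $n\in I\setminus Z(\bad_{i+2})$ the diagrams $\afilt{Y_n}{i}$ and $\afilt{Y_\infty}{i}$ are isomorphic, and by the weight-lattice identification of \cref{sec:limit_dynkin} this isomorphism matches the dot actions. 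Hence the finite sets $\{w\in\wx_n\mid l(w)=i\}$ and $\{w\in\wx_\infty\mid l(w)=i\}$ are in a bijection carrying each $w\cdot 0$ (an $X$-dominant weight by \cref{stm:wx_gives_x_dominant}, and equal to $-\sum_{\alpha\in I(w)}\alpha$ by \cref{stm:dot_action_by_negatives_sum}) to the corresponding weight inside the common sublattice $\filt{\aff{P}}{i+2}$. In particular $\bigoplus_{w\in\wx_\infty,\,l(w)=i}L_{w\cdot 0}$ is a finite direct sum, supported in the finite set $\Gamma=\{w\cdot 0\mid w\in\wx_\infty,\ l(w)=i\}$ of gradings, and this support is stable across $n$ and $\infty$.

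Next I would use the finite Garland formula, which gives $\chr H^i(\lie{u}^{-}_n)=\sum_{w\in\wx_n,\,l(w)=i}\chr L_{w\cdot 0}$ in $K(\rep(G_n^0)^a)$ for every $n\in I$. For each grading $\epsilon$, \cref{stm:m_l_coherence} furnishes a nonzero $Q_\epsilon\in\kk[T]$ so that the component $[\epsilon]\chr H^i(\lie{u}^{-})$ takes one common value for all $t\in\kk\setminus Z(Q_\epsilon)$ and all $n\in I\setminus Z(Q_\epsilon)$. Since $H^i(\lie{u}^{-})$ is supported in the finite stable set $\Gamma$, I set $p=\bad_{i+2}\prod_{\epsilon\in\Gamma}Q_\epsilon$, a single nonzero polynomial (which may be taken in $\Q[T]$, all structure constants being rational). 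For $t\in\kk\setminus Z(p)$ and any sufficiently large $n\in I\setminus Z(p)$ we then obtain, componentwise and hence globally,
\[
  \chr H^i(\lie{u}^{-}_t)=\chr H^i(\lie{u}^{-}_n)=\sum_{\substack{w\in\wx_n\\ l(w)=i}}\chr L_{w\cdot 0}=\sum_{\substack{w\in\wx_\infty\\ l(w)=i}}\chr L_{w\cdot 0},
\]
where the last equality is the stable bijection of the first paragraph. Finally, since $t\notin\Z$ makes $\drep(G_t)$ abelian and semisimple, the category $\drep(G_t)^a$ of $\lie{l}_t$-modules is semisimple with simple objects the $L_\phi$, so equality of classes in its Grothendieck group promotes to an isomorphism of $\lie{l}_t$-modules, giving the claim for all such $t$. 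The only delicate point is the bookkeeping of the second paragraph, namely checking that the bijection of length-$i$ elements genuinely identifies the weights $w\cdot 0$ across $n$ and $\infty$; this is immediate from the explicit root data of \cref{sec:limit_dynkin} together with \cref{stm:neighbor_diagram_same}, and everything else is a direct transcription of the proof of \cref{stm:limit_transcendental}.
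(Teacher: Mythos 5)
Your proposal is, in substance, the paper's own proof: the paper disposes of this theorem in one line (``the proof just repeats the argument from \cref{stm:limit_transcendental}''), with the finite Weyl--Kac identity replaced by Garland's theorem, and your write-up is exactly that argument carried out in detail --- finiteness of $\{w\in\wx_\infty \mid l(w)=i\}$ via \cref{stm:zero_wx_is_stable}, the identification of the length-$i$ sets and of the weights $w\cdot 0$ across $n$ and $\infty$ via \cref{stm:neighbor_diagram_same} and the weight-lattice comparison (the paper delegates this to the mechanism of \cref{stm:series_are_coherent}), per-grading coherence via \cref{stm:m_l_coherence}, evaluation at a large integer $n$, and semisimplicity of $\drep(G_t)$ for $t\notin\Z$ to promote equality of characters to an isomorphism of $\lie{l}_t$-modules. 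All of these steps are sound and correctly sourced.

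One caveat, at precisely the point where you claim to improve on \cref{stm:limit_transcendental}. Your justification for a single polynomial $p$ rests on the sentence ``since $H^i(\lie{u}^{-})$ is supported in the finite stable set $\Gamma$'' --- but for the left-hand side at a general $t$ this is part of what must be proved, not an input. What you have established is finiteness of the support of the \emph{right-hand} side; for each grading $\epsilon=-j\delta$ outside $\Gamma$, \cref{stm:m_l_coherence} only yields a polynomial $Q_j$ with $[\epsilon]\chr H^i(\lie{u}^{-}_t)=0$ for $t\notin Z(Q_j)$, and there are infinitely many such gradings, which your choice $p=\bad_{i+2}\prod_{\epsilon\in\Gamma}Q_\epsilon$ does not control: a priori the cohomology could acquire components in high gradings for $t\in\bigcup_j Z(Q_j)$, a set that need not lie in the zero locus of any single polynomial. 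As written, your argument (like a verbatim repetition of the proof of \cref{stm:limit_transcendental}) establishes the isomorphism for all transcendental $t$; the stated single-polynomial form requires an additional vanishing argument uniform in $j$. The paper's one-line proof glosses over exactly the same point, so on this score you are faithful to it --- but it would be worth either supplying that uniformity or weakening the hypothesis to transcendental $t$.
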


\begin{proof}
  The proof just repeats the argument from \cref{stm:limit_transcendental}.
\end{proof}

\section{Explicit calculations}

\subsection{Expressing reduced Weyl group elements using root system}
\label{sec:explicit_reduced}

Consider an arbitrary Dynkin diagram $\aff{X}$.

\begin{definition}
  Say that a \emph{finite} set $S\subset \aff{\Delta}^{+}$ of positive roots is a
  \emph{slice} if $S\cup \aff{\Delta}^{-}\setminus (-S)$ is closed under addition.
\end{definition}

The following lemma is a direct rewrite of the similar Bourbaki's theorem
applied to the case when $\aff{X}$ is not necessarily finite. 

\begin{lemma}
  Suppose we have a subset of positive roots $S\subset\aff{\Delta}^{+}$.
  Then $S$ is a slice if and only if there exists $w\in\aff{W}$ such that
  $S = I(w)$.
\end{lemma}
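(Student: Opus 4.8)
The plan is to recast the slice condition as a statement about a single \emph{total} set of roots and then to run an induction on $|S|$, peeling off one simple root at a time exactly as in the finite-rank Bourbaki argument. For $S\subseteq\aff{\Delta}^{+}$ write $\Psi(S) = S\cup(\aff{\Delta}^{-}\setminus(-S))$. A case check shows that for every root $\alpha$ exactly one of $\alpha,-\alpha$ lies in $\Psi(S)$, so $S\mapsto\Psi(S)$ is a bijection between subsets of $\aff{\Delta}^{+}$ and such ``total'' subsets of $\aff{\Delta}$, with inverse $\Psi\mapsto\Psi\cap\aff{\Delta}^{+}$; and $S$ is a slice precisely when $S$ is finite and $\Psi(S)$ is closed under addition. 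The computation that links everything to the Weyl group is the identity $\Psi(I(w))=w(\aff{\Delta}^{-})$: from $w\aff{\Delta}^{+}\cap\aff{\Delta}^{-}=-(w\aff{\Delta}^{-}\cap\aff{\Delta}^{+})=-I(w)$ one gets $w\aff{\Delta}^{-}=I(w)\cup(\aff{\Delta}^{-}\setminus(-I(w)))$.

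The easy direction is then immediate. If $S=I(w)$, then $|S|=l(w)<\infty$ by \cref{stm:full_description_of_negatives}, and $\Psi(S)=w(\aff{\Delta}^{-})$ is the image of the addition-closed set $\aff{\Delta}^{-}$ under the linear bijection $w$, hence is itself closed under addition. Thus $S$ is a slice.

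For the converse I would induct on $|S|$, the base case $S=\emptyset=I(\mathrm{id})$ being trivial. The heart of the step is the lemma that a nonempty slice contains a simple root. To prove it, choose $\gamma\in S$ of minimal height $\mathrm{ht}(\gamma)=\sum_x k_x$ where $\gamma=\sum_x k_x\alpha_x$. If $\gamma$ were not simple then, since every root of $\aff{X}_{\infty}$ is real (the diagram is locally of finite type), some simple coroot satisfies $\langle\gamma,\alpha_x^{\vee}\rangle>0$, for otherwise $(\gamma,\gamma)\le 0$; by \cref{stm:simple_reflection_permutes_almost} the root $s_x\gamma=\gamma-\langle\gamma,\alpha_x^{\vee}\rangle\alpha_x$ is again positive and of smaller height, and the unbroken $\alpha_x$-root string forces $\gamma-\alpha_x\in\aff{\Delta}^{+}$. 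Then $\gamma=(\gamma-\alpha_x)+\alpha_x$ expresses $\gamma$ as a sum of two positive roots of height $<\mathrm{ht}(\gamma)$, so by minimality neither summand lies in $S$; closedness of $\Psi(S)$ restricted to negative roots says exactly that $\aff{\Delta}^{+}\setminus S$ is closed, which then forces $\gamma\notin S$, a contradiction. Hence some simple $\alpha_x$ lies in $S$. Setting $\Psi'=s_x\Psi(S)$ and $S'=\Psi'\cap\aff{\Delta}^{+}$, the map $s_x$ preserves roots, negation and sums, so $\Psi'$ is again total and closed and $S'$ is a slice; using $s_x\alpha_x=-\alpha_x\notin\Psi(S)$ together with \cref{stm:simple_reflection_permutes_almost} one checks $S'=s_x(S\setminus\{\alpha_x\})$, so $|S'|=|S|-1$. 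By the inductive hypothesis $S'=I(w')$, i.e.\ $\Psi'=w'(\aff{\Delta}^{-})$, whence $\Psi(S)=s_x\Psi'=s_xw'(\aff{\Delta}^{-})$ and therefore $S=I(s_xw')$.

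The main obstacle is the simple-root lemma and the verification that the reduction genuinely lowers $|S|$ while keeping a slice; both rest on the fact that $\aff{X}_{\infty}$ is locally of finite type, so that all roots are real and $\alpha$-strings are unbroken, which is precisely what lets the finite-rank reflection arguments run verbatim. I would emphasize that finiteness of $S$ is essential: it anchors the induction and excludes the non-inversion biclosed sets that appear in the infinite-rank setting, so the statement is genuinely about \emph{finite} slices.
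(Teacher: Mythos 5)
Your proof is correct, but it takes a genuinely different route from the paper's. The paper proves the converse by a greedy correction algorithm: starting from $w=e$, as long as some simple root $\alpha$ has $w(-\alpha)\notin C$ it replaces $w$ by $s_{w(\alpha)}w$, and the progress measure is the cardinality of the symmetric difference $C\mathbin{\triangle} w(\aff{\Delta}^{-})$, which is finite (this is where finiteness of $S$ enters there) and strictly decreases; at termination, closedness of $C$ under addition forces $w(\aff{\Delta}^{-})\subseteq C$ and hence equality. You instead run the classical Bourbaki--Papi induction on $|S|$, whose engine is the lemma that a nonempty slice contains a simple root: you extract it from a minimal-height element via the observation that the slice condition makes $\aff{\Delta}^{+}\setminus S$ closed under addition, then peel off $\alpha_x$ by passing to $S'=s_x(S\setminus\{\alpha_x\})$. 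The trade-off is real: your argument invokes root-theoretic facts (all roots real, positive pairing with some simple coroot, unbroken $\alpha_x$-strings giving $\gamma-\alpha_x\in\aff{\Delta}^{+}$) that hold for the locally finite diagrams $A_\infty$, $C_\infty$, $D_\infty$ actually used later, but would need modification for a diagram whose root system contains imaginary roots, since then a minimal-height $\gamma$ may pair nonpositively with every simple coroot (one would replace the string argument by the general decomposability of non-simple positive roots); the paper's algorithm sidesteps heights entirely and needs only \cref{stm:simple_reflection_permutes_almost} plus closedness, so it is uniform in the diagram. In exchange, your induction is more constructive: it produces a reduced word for $w$ one simple reflection at a time and gives $l(w)=|S|$ as a byproduct, which the paper's proof only recovers via \cref{stm:full_description_of_negatives}.
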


\begin{proof}
  If such element $w\in\aff{W}$ exists, then $I(w)$ 
  is finite by \cref{stm:full_description_of_negatives}, and $I(w)\cup
  \aff{\Delta}^{-}\setminus (-I(w)) = w(\aff{\Delta}^{-})$ is closed under addition.
  
  Suppose we have a slice $S$.
  Denote the union $S\cup \aff{\Delta}^{-}\setminus
  (-S)$ by $C$.
  We will find $w$ such that $I(w) = S$ using the following algorithm. We begin
  with $w = e$. On each step, we will refine current $w$ to be closer to the
  one we need, where the closeness is measured by the cardinality of the
  symmetric difference $|C \mathop{\triangle} w(\aff{\Delta}^{-})|$.

  Suppose that for our current $w$, there exists a simple root
  $\alpha$ such that $w(-\alpha)\notin C$ for some simple root $\alpha$.
  Note that $C\cup -C = \aff{\Delta}$, therefore we have 
  $w(\alpha)\in C$. Conjugating
  \cref{stm:simple_reflection_permutes_almost} by $w$, we get that the
  reflection $s_{w(\alpha)}$ in the root $w(\alpha)$ permutes all the
  elements of $w(\aff{\Delta}^{-})$ except for $-w(\alpha)$, which it
  sends to $w(\alpha)$. Put $w' = s_{w(\alpha_i)} w$. We get that
  $w'(\aff{\Delta}^{-}) = (w(\aff{\Delta}^{-})\setminus \{ -w(\alpha_i) \})\cup
  \{ w(\alpha_i) \}$.
  Therefore the symmetric difference of $C$ and $w'(\aff{\Delta}^{-})$ has less
  elements. Repeat the algorithm with $w'$.

  Since the cardinality of the symmetric difference is less at each step,
  at some point we fall into the second case, meaning that $w(-\alpha)\in C$ for every simple root
  $\alpha$. Now since $C$ is closed under addition, $w(\beta)\in C$ for any
  negative root $\beta\in\aff{\Delta}^{-}$. It means that
  $w(\aff{\Delta}^{-})\subset C$. Therefore $C=w(\aff{\Delta}^{-})$, and $S = I(w)$.
\end{proof}

Now consider the diagram $\aff{X}_{\infty}$.
Our goal is to describe the slices corresponding to the elements $\wx_{\infty}$.

\begin{lemma}
  \label{stm:reduced_using_roots}
  A finite subset $S\subset\aff{\Delta}^{+}_{\infty}$ of positive roots is a
  slice corresponding to an element in $\wx_{\infty}$
  if and only if the following conditions hold:
  \begin{enumerate}
  \item $S\cap\Delta^{+}_{\infty} = \emptyset$,
  \item for any element $\alpha\in S$ and positive root
    $\beta\in\Delta^{+}_{\infty}$ such that $\alpha-\beta$ is a root, we have
    $\alpha-\beta \in S$.
  \end{enumerate}
\end{lemma}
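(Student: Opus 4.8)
The plan is to derive both directions from the preceding lemma (slices are exactly the sets $I(w)$) together with one structural observation about the roots of $\aff{X}_{\infty}$. Since the single vertex $Y_{\infty}=\{\alpha_0\}$ is the only simple root carrying $\delta$, and it does so with coefficient $1$, the $Y$-coefficient (i.e.\ the $\alpha_0$-coefficient) of any root equals its $\delta$-coefficient. Inspecting the explicit lists of $\aff{\Delta}^{+}_{\infty}$ in all three cases, every positive root has $\delta$-coefficient $0$ or $1$, and it is $0$ exactly when the root lies in $\Delta^{+}_{\infty}=\Delta^{+}(X_{\infty})$; hence every root of $\aff{X}_{\infty}$ has $\delta$-coefficient in $\{-1,0,1\}$, and any root with positive $\delta$-coefficient is positive. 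I will call this observation $(\ast)$; it is the only place the specific root data enters.

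For the forward direction I take $S=I(w)$ with $w\in\wx_{\infty}$ and set $C=w(\aff{\Delta}^{-}_{\infty})$, so that $C$ is closed under addition, $C\sqcup(-C)=\aff{\Delta}_{\infty}$, and $S=C\cap\aff{\Delta}^{+}_{\infty}$. Condition (1) is precisely the defining property $I(w)\cap\Delta_{\infty}=\emptyset$ of $X$-reducedness, and it also gives $\Delta^{-}_{\infty}\subset C$. Thus for $\alpha\in S$ and $\beta\in\Delta^{+}_{\infty}$ with $\alpha-\beta$ a root, closedness of $C$ yields $\alpha-\beta\in C$ (since $\alpha,-\beta\in C$); by (1) and $(\ast)$ the root $\alpha$ has $\delta$-coefficient $1$ and $\beta$ has $\delta$-coefficient $0$, so $\alpha-\beta$ has positive $\delta$-coefficient, hence is positive, hence lies in $C\cap\aff{\Delta}^{+}_{\infty}=S$. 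This is condition (2).

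For the converse I assume (1) and (2); by the preceding lemma it is enough to show $C:=S\cup(\aff{\Delta}^{-}_{\infty}\setminus(-S))$ is closed under addition, for then $S=I(w)$ and (1) forces $w\in\wx_{\infty}$. Writing $N=\aff{\Delta}^{-}_{\infty}\setminus(-S)$, I take $\gamma,\gamma'\in C$ with $\gamma+\gamma'$ a root and argue by how many of them lie in $S$. If both lie in $S$, then by (1) and $(\ast)$ each has $\delta$-coefficient $1$, so the sum would have $\delta$-coefficient $2$ and is not a root. If $\gamma=\alpha\in S$ and $\gamma'=-\mu\in N$ (so $\mu$ is positive, $\mu\notin S$): when $\mu\in\Delta^{+}_{\infty}$, condition (2) gives $\alpha-\mu\in S\subset C$; when $\mu$ has $\delta$-coefficient $1$, the root $\alpha-\mu$ has $\delta$-coefficient $0$ and so lies in $\Delta_{\infty}$, and it cannot be positive, for $\alpha-\mu\in\Delta^{+}_{\infty}$ would let (2) with $\beta=\alpha-\mu$ conclude $\mu=\alpha-\beta\in S$, contradicting $\mu\notin S$; hence $\alpha-\mu\in\Delta^{-}_{\infty}$, so $\mu-\alpha\in\Delta^{+}_{\infty}$ is not in $S$ by (1), giving $\alpha-\mu\in N\subset C$. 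Finally, if $\gamma=-\mu,\gamma'=-\mu'\in N$ with $\mu+\mu'$ a (positive) root, suppose $\mu+\mu'\in S$; by $(\ast)$ it has $\delta$-coefficient $1$, so one summand, say $\mu$, has $\delta$-coefficient $0$ and lies in $\Delta^{+}_{\infty}$, whence (2) applied to $\mu+\mu'\in S$ with $\beta=\mu$ gives $\mu'=(\mu+\mu')-\mu\in S$, contradicting $\mu'\notin S$; thus $-(\mu+\mu')\in N\subset C$. In every case $\gamma+\gamma'\in C$.

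The main obstacle is the converse, and within it the two mixed cases, where one must prevent the sum from landing in $-S$ (or, for a positive sum, in $\Delta^{+}_{\infty}$). The decisive mechanism is that $(\ast)$ forces any decomposition of a $\delta$-coefficient-$1$ root into two positive roots to split off a summand in $\Delta^{+}_{\infty}$, which is exactly the hypothesis feeding condition (2); this is what makes (2) strong enough to recover closedness, and hence to characterize the inversion sets of $\wx_{\infty}$.
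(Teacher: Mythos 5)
Your proof is correct and follows essentially the same route as the paper: both directions rest on the prior lemma identifying slices with inversion sets $I(w)$ together with the observation that roots of $\aff{X}_{\infty}$ have $\delta$-coefficient in $\{-1,0,1\}$ with the $\delta$-coefficient-$1$ roots exactly $\aff{\Delta}^{+}_{\infty}\setminus\Delta^{+}_{\infty}$, and the converse is the same case analysis for closedness of $S\cup\aff{\Delta}^{-}_{\infty}\setminus(-S)$, merely organized by membership in $S$ versus $N$ rather than by $\delta$-coefficient triples. Your explicit dismissal of the two-elements-of-$S$ case (sum has $\delta$-coefficient $2$, hence not a root) fills in a step the paper leaves implicit, but the substance is identical.
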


\begin{proof}
Recall from the explicit description of the objects of $\aff{X}_{\infty}$ that
the elements of the root system $\aff{\Delta}_{\infty}$ can only have three
different $\delta$-coefficients. The root subsystem $\Delta_{\infty}$ are
exactly the roots with zero $\delta$-coefficient. The positive roots
$\aff{\Delta}^{+}_{\infty}\setminus \Delta^{+}_{\infty}$ form the set of roots with
$\delta$-coefficient $1$. Finally, the opposites
$\aff{\Delta}^{-}_{\infty}\setminus \Delta^{-}_{\infty}$
form the set of roots with $\delta$-coefficient $-1$.

  First, let us prove that any slice $S$ of an $X$-reduced Weyl group element satisfy these conditions.
  The first condition holds since $S$ corresponds to an $X$-reduced Weyl group element.
  Consider the second condition. Say we have
  $\alpha\in S$ and $\beta\in\Delta^{+}_{\infty}$ such that $\alpha-\beta$ is a
  root. $\beta$ is a root with $\delta$-coefficient $0$, therefore
  $-\beta$ lies in $\aff{\Delta}^{-}_{\infty}\setminus (-S)$. Since $S$ is a slice,
  the set $S\cup \aff{\Delta}^{-}_{\infty}\setminus (-S)$ is closed under addition,
  therefore $\alpha-\beta \in S\cup \aff{\Delta}^{-}_{\infty}\setminus (-S)$.
  But the $\delta$-coeffitient of $\alpha-\beta$ is $1$, therefore
  $\alpha-\beta\notin \aff{\Delta}^{-}_{\infty}$, and $\alpha-\beta\in S$.

  Now let $S\subset \aff{\Delta}^{+}_{\infty}$ be a finite subset of positive roots
  satisfying the two conditions. We need to prove that the set $C = S\cup
  \aff{\Delta}^{-}_{\infty}\setminus (-S)$ is closer under addition.
  Suppose we have roots 
  $\alpha,\beta\in C$ such that $\alpha+\beta$ is a root. Taking into
  account that the $\delta$-coefficient of each root $\alpha,\beta,\alpha+\beta$
  lies in the set $\{-1,0,1\}$, we prove that $\alpha+\beta\in C$ by
  considering several cases. We can also assume that
  the $\delta$-coefficient of $\alpha$ is greater or equal than the one of $\beta$.

  $\bullet$\ The $\delta$-coefficients of $\alpha,\beta,\alpha+\beta$ are $0,0,0$ respectively.
  Since the only roots in $C$ with $\delta$-coeffitient $0$ are $\Delta^{-}_{\infty}$,
  we have $\alpha,\beta\in\Delta^{-}_{\infty}$. Then $\alpha+\beta\in\Delta^{-}_{\infty}\subset C$.

  $\bullet$\ The $\delta$-coefficients of $\alpha,\beta,\alpha+\beta$ are $1,-1,0$ respectively.
  Since $\alpha+\beta$ is a root, it is either a positive or a negative one.
  If it is negative, we are done. Suppose it is positive. Note that $\alpha\in
  S$. Then $\alpha-(\alpha+\beta) = -\beta\in C$ because of the condition $(2)$.
  Contradiction with the fact that $\beta\in C$.

  $\bullet$\ The $\delta$-coefficients of $\alpha,\beta,\alpha+\beta$ are $1,0,1$ respectively.
  Note that $\alpha\in S$ and $\beta\in\Delta^{-}_{\infty}$. Then
  $\alpha+\beta = \alpha - (-\beta)\in S\subset C$ by the condition $(2)$ of $S$.

  $\bullet$\ The $\delta$-coefficients of $\alpha,\beta,\alpha+\beta$ are $0,-1,-1$ respectively.
  Suppose $\alpha+\beta\notin C$. Then $\alpha+\beta\in -S$, so
  $-\alpha-\beta\in S$. Also note that $\alpha\in\Delta^{-}_{\infty}$. Then we
  have $(-\alpha-\beta)-(-\alpha) = -\beta \in S\subset C$ by the condition
  $(2)$. Contradiction with the fact that $\beta\in C$.

  Therefore $S$ is a slice. By the condition $(1)$ it corresponds to an
  $X$-reduced Weyl group element.
\end{proof}

This motivates the following definition.

\begin{definition}
  For two roots $\alpha,\beta\in\aff{\Delta}^{+}_{\infty}\setminus
  \Delta^{+}_{\infty}$ with $\delta$-coefficient $1$, say that $\alpha\leq
  \beta$ if there exists a sequence
  $\gamma_1,\ldots,\gamma_k\in\Delta^{+}_{\infty}$ of positive roots with
  $\delta$-coefficent $0$ such that $\alpha = \beta - \gamma_1 - \ldots - \gamma_k$.
\end{definition}

\begin{corollary}
  The set $\wx_{\infty}$ of $X$-reduced Weyl group elements is in a bijection
  with the set of finite downward closed subsets of $\aff{\Delta}^{+}_{\infty}\setminus
  \Delta^{+}_{\infty}$ via
  $$
  w\in\wx_{\infty} \mapsto I(w)\subset \aff{\Delta}^{+}_{\infty}\setminus
  \Delta^{+}_{\infty}.
  $$
\end{corollary}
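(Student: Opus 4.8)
The plan is to read the corollary off \cref{stm:reduced_using_roots} almost directly, after supplying two missing ingredients: the injectivity of the map $w\mapsto I(w)$, and the translation of condition~(2) of that lemma into downward-closedness for the order $\leq$.

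First I would record that $w\mapsto I(w)$ is injective on all of $\aff{W}_{\infty}$, and hence in particular on $\wx_{\infty}$. Indeed, by \cref{stm:dot_action_by_negatives_sum} we have $w\cdot 0 = -\sum_{\alpha\in I(w)}\alpha$, so the set $I(w)$ determines the weight $w\cdot 0$, and by \cref{stm:dot_action_on_zero_is_unique} the weight $w\cdot 0$ determines $w$. Next I would identify the image: by the preceding lemma characterizing slices as the sets $I(w)$, together with \cref{stm:reduced_using_roots}, the sets $I(w)$ with $w\in\wx_{\infty}$ are exactly the finite subsets $S\subset\aff{\Delta}^{+}_{\infty}$ satisfying conditions~(1) and~(2) of that lemma. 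Recalling from the proof of \cref{stm:reduced_using_roots} that every positive root has $\delta$-coefficient $0$ (those in $\Delta^{+}_{\infty}$) or $1$ (those in $\aff{\Delta}^{+}_{\infty}\setminus\Delta^{+}_{\infty}$), condition~(1) is precisely the requirement $S\subset\aff{\Delta}^{+}_{\infty}\setminus\Delta^{+}_{\infty}$. So everything reduces to showing that, for such $S$, condition~(2) is equivalent to $S$ being downward closed for $\leq$.

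One direction is immediate: if $S$ is downward closed and $\alpha\in S$, $\beta\in\Delta^{+}_{\infty}$ with $\alpha-\beta$ a root, then $\alpha-\beta$ again has $\delta$-coefficient $1$, hence lies in $\aff{\Delta}^{+}_{\infty}\setminus\Delta^{+}_{\infty}$ and satisfies $\alpha-\beta\leq\alpha$ (take $k=1$, $\gamma_1=\beta$); downward closure then gives $\alpha-\beta\in S$, which is condition~(2). For the converse I would argue by induction on the height of $\beta-\alpha$ whenever $\alpha\leq\beta$ and $\beta\in S$: the aim is to peel off a single positive root $\gamma\in\Delta^{+}_{\infty}$ with $\beta-\gamma$ a root and $\alpha\leq\beta-\gamma$, so that condition~(2) forces $\beta-\gamma\in S$ and induction then yields $\alpha\in S$.

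Producing such a $\gamma$ is the main obstacle, since the literal definition of $\leq$ only asserts that $\beta-\alpha$ is \emph{some} sum of positive roots and does not guarantee that the partial sums are roots; equivalently, I must show that $\leq$ is exactly the reflexive–transitive closure of the one-step relation appearing in condition~(2). I would establish this by a root-string argument, using the explicit lists of $\delta$-coefficient-$1$ roots in each of the types $A_{\infty}$ (case $GL$), $D_{\infty}$ (case $O$), and $C_{\infty}$ (case $Sp$) to exhibit an admissible single step that strictly decreases the height while staying above $\alpha$. Combining the injectivity, the identification of the image with the finite subsets of $\aff{\Delta}^{+}_{\infty}\setminus\Delta^{+}_{\infty}$ satisfying condition~(2), and this equivalence with downward-closedness yields the claimed bijection.
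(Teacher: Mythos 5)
Your proof is correct and takes essentially the route the paper intends: the corollary is stated as an immediate consequence of \cref{stm:reduced_using_roots}, and your argument simply makes the implicit ingredients explicit — injectivity of $w\mapsto I(w)$ via \cref{stm:dot_action_by_negatives_sum} and \cref{stm:dot_action_on_zero_is_unique}, and the matching of condition~(2) with downward closure under $\leq$. The chain property you rightly flag as the only real content — that $\leq$, defined by arbitrary sums of positive roots, is generated by single root steps — does hold in all three types (e.g.\ in $D_{\infty}$ one decrements the index that keeps the pair $(i,j)$ off the diagonal, so as never to land on a non-root $\delta-2\eps_i$), as the explicit poset descriptions in the subsequent lemmas confirm.
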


Let us use this correspondence to describe $X$-reduced slices.
We will produce the explicit formulas only in the $GL$ case.

\begin{center}\bf
  Case $GL$
\end{center}

\begin{lemma}
  The poset $\aff{\Delta}^{+}_{\infty}\setminus \Delta^{+}_{\infty}$ is
  isomorphic to the poset $\Z_{>0}^2$ via the map 
  $$
  (i,j)\in\Z_{>0}^2 \mapsto \delta
  + \eps_{1-j} - \eps_i\in \aff{\Delta}^{+}_{\infty}\setminus \Delta^{+}_{\infty},
  $$
  where in $\Z_{>0}^2$ we have $(i,j)\leq (i',j')$ if and only
  if $i\leq i'$ and $j\leq j'$.
\end{lemma}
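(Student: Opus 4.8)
The plan is to identify the two underlying sets, check that the given map is a bijection, then translate the order relation into a statement about non-negative integer combinations of the simple roots of $X_{\infty}$, and finally read off the product order from the disjoint-chain structure of $X_{\infty}$.

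First I would recall from the explicit description of $\aff{\Delta}^{+}_{\infty}$ in the case $GL$ that a root has $\delta$-coefficient $0$ exactly when it lies in $\Delta_{\infty}$, so $\aff{\Delta}^{+}_{\infty}\setminus\Delta^{+}_{\infty}$ is precisely the set of positive roots of $\delta$-coefficient $1$, namely $\{\delta + \eps_a - \eps_b \mid a\leq 0 < 1\leq b\}$. Reparametrizing $a = 1-j$ (so that $j = 1-a$ ranges over $\Z_{>0}$ as $a$ ranges over $\Z_{\leq 0}$) and $b = i$ (ranging over $\Z_{>0}$), the map $(i,j)\mapsto \delta + \eps_{1-j}-\eps_i$ is visibly a bijection from $\Z_{>0}^2$ onto $\aff{\Delta}^{+}_{\infty}\setminus\Delta^{+}_{\infty}$.

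The key step is to reinterpret the order. By the definition of $\leq$ on these roots, for $\alpha,\beta\in\aff{\Delta}^{+}_{\infty}\setminus\Delta^{+}_{\infty}$ we have $\alpha\leq\beta$ if and only if $\beta-\alpha$ is a sum of positive roots of $\Delta^{+}_{\infty}$. Since every positive root of $\Delta^{+}_{\infty}$ is a non-negative integer combination of the simple roots $\{\alpha_a\mid a\neq 0\}$ of $X_{\infty}$, while conversely each such $\alpha_a$ is itself a positive root, the additive semigroup generated by the positive roots equals $\sum_{a\neq 0}\Z_{\geq 0}\alpha_a$. Hence $\alpha\leq\beta$ if and only if $\beta-\alpha\in\sum_{a\neq 0}\Z_{\geq 0}\alpha_a$, i.e. all simple-root coefficients of $\beta-\alpha$ are non-negative.

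Finally, writing $\alpha = \delta + \eps_{1-j}-\eps_i$ and $\beta = \delta + \eps_{1-j'}-\eps_{i'}$, I would compute
\[
  \beta-\alpha = (\eps_i - \eps_{i'}) + (\eps_{1-j'} - \eps_{1-j}).
\]
The essential point is that $X_{\infty}$ splits into two disjoint half-infinite $A$-chains: the simple roots $\alpha_a$ with $a\geq 1$ involve only the $\eps_b$ with $b\geq 1$, whereas those with $a\leq -1$ involve only the $\eps_b$ with $b\leq 0$. As the $\{\alpha_a\}_{a\neq 0}$ are linearly independent, the simple-root coefficients of $\beta-\alpha$ split cleanly along these two chains: the summand $\eps_i-\eps_{i'}$ (both indices $\geq 1$) contributes only to the $a\geq 1$ chain and is a non-negative combination exactly when $i\leq i'$, while $\eps_{1-j'}-\eps_{1-j}$ (both indices $\leq 0$) contributes only to the $a\leq -1$ chain and is non-negative exactly when $1-j'\leq 1-j$, i.e. when $j\leq j'$. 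Combining these yields $\alpha\leq\beta$ if and only if $i\leq i'$ and $j\leq j'$, which is exactly the product order on $\Z_{>0}^2$, so the bijection is a poset isomorphism. There is no serious obstacle here, as everything reduces to a direct computation; the only points requiring mild care are the identification of ``a sum of positive roots'' with ``a non-negative combination of simple roots,'' and the disjoint-chain decomposition that makes the two coordinates behave independently.
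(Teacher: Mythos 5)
Your proof is correct and matches the paper's approach: the paper disposes of this lemma with ``clear by examination of the explicit description of the root system,'' and your argument is exactly that examination carried out in full — identifying the $\delta$-coefficient-$1$ roots, reducing the order to non-negativity of simple-root coefficients, and using the splitting of $X_{\infty}$ into the two half-infinite chains (indices $\geq 1$ and $\leq 0$) to see that the two coordinates order independently. No gaps; the only implicit points (the empty sum giving reflexivity, and linear independence of the $\alpha_a$, $a\neq 0$, justifying the clean split of $\beta-\alpha$) are handled correctly.
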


\begin{proof}
  It is clear by examination of the explicit description of the root system.
\end{proof}

\begin{corollary}
  For a partition $\lambda$, let $S_{\lambda}\subset \aff{\Delta}^{+}_{\infty}\setminus
  \Delta^{+}_{\infty}$ be the subset that contains a root $\delta +
  \eps_{1-j} - \eps_i$ if and only if $\lambda_i \geq j$.
  The downward closed subsets of $\aff{\Delta}^{+}_{\infty}\setminus
  \Delta^{+}_{\infty}$ are exactly the slices $S_{\lambda}$, where $\lambda$ is
  any partition.
\end{corollary}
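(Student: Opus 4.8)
The plan is to transport the statement to the combinatorial poset $\Z_{>0}^2$ using the preceding lemma, and then recognize it as the classical identification of finite order ideals of $\Z_{>0}^2$ with partitions via Young diagrams. Under the poset isomorphism $(i,j)\mapsto \delta+\eps_{1-j}-\eps_i$, the subset $S_\lambda$ becomes $\{(i,j)\in\Z_{>0}^2 : j\leq\lambda_i\}$, which is precisely the Young diagram of $\lambda$. Since a slice is finite by definition, and the previous corollary matched slices with the finite downward closed subsets of $\aff{\Delta}^{+}_{\infty}\setminus \Delta^{+}_{\infty}$, the content of the statement is exactly that $\lambda\mapsto S_\lambda$ is a bijection from the set of partitions onto the finite downward closed subsets of $\Z_{>0}^2$.

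First I would verify that each $S_\lambda$ is a finite downward closed subset. Finiteness is immediate, since $\lambda$ has finitely many nonzero parts and each part is finite, so $|S_\lambda|=|\lambda|$. For downward closedness, suppose $(i,j)\in S_\lambda$, that is $\lambda_i\geq j$, and take $(i',j')\leq (i,j)$, meaning $i'\leq i$ and $j'\leq j$. Because $\lambda$ is weakly decreasing we have $\lambda_{i'}\geq\lambda_i\geq j\geq j'$, so $(i',j')\in S_\lambda$. Thus $S_\lambda$ is an order ideal; this step uses nothing beyond the defining monotonicity of a partition.

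Conversely, given a finite downward closed subset $D\subset\Z_{>0}^2$, I would recover a partition by setting $\lambda_i=\max\{j : (i,j)\in D\}$, with the convention $\lambda_i=0$ when no such $j$ exists. For each fixed $i$ the fibre $\{j : (i,j)\in D\}$ is downward closed in $\Z_{>0}$, hence equals $\{1,\dots,\lambda_i\}$, which gives $(i,j)\in D \iff j\leq\lambda_i \iff (i,j)\in S_\lambda$, so $D=S_\lambda$. That $\lambda$ is genuinely a partition follows from downward closedness in the first coordinate: since $(i,j)\leq (i+1,j)$, membership $(i+1,j)\in D$ forces $(i,j)\in D$, so the fibres shrink as $i$ grows and $\lambda_i\geq\lambda_{i+1}$; finiteness of $D$ makes every part finite and the support finite. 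Injectivity of $\lambda\mapsto S_\lambda$ is then clear, as $\lambda_i$ is read off as $\max\{j:(i,j)\in S_\lambda\}$.

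There is no genuinely hard step here: the statement is the standard correspondence between order ideals of $\Z_{>0}^2$ and Young diagrams, and essentially all of the structural work has already been done in the preceding lemma and corollary. The only point demanding care is bookkeeping with conventions — confirming that the componentwise order on $\Z_{>0}^2$ matches the relation $\leq$ on roots induced by subtracting positive roots of $\delta$-coefficient $0$, and that the finiteness built into the notion of a slice corresponds exactly to $\lambda$ being an honest partition (finitely many, finite parts) rather than an arbitrary weakly decreasing sequence of positive integers.
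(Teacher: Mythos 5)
Your proof is correct and is exactly the argument the paper intends: the corollary is stated without proof as an immediate consequence of the preceding lemma identifying the poset $\aff{\Delta}^{+}_{\infty}\setminus\Delta^{+}_{\infty}$ with $\Z_{>0}^2$, after which it is the standard bijection between partitions (Young diagrams) and finite order ideals of $\Z_{>0}^2$. Your writeup simply makes explicit the verification (finiteness, downward closedness of $S_\lambda$, and reconstruction of $\lambda$ via row maxima) that the paper leaves to the reader.
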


\begin{lemma}
  Fix a partition $\lambda$. Let $\lambda =
  (p_1,\ldots,p_b \mid q_1,\ldots,q_b)$ be the Frobenius coordinates of $\lambda$.
  Let $w_{\lambda}$ be the $X$-reduced Weyl group
  element such that $I(w_{\lambda}) = S_{\lambda}$.
  Denote the increasing sequence of numbers in $\Z_{\geq 0} \setminus \{ p_i \}_{i=1}^b$ by $\{ \bar{p}_c
  \}_{c=1}^{\infty}$,
  and the respective complement of $q$'s by $\{ \bar{q}_c \}_{c=1}^{\infty}$.
  Also fix a weight $\beta = \sum_{i=-\infty}^{+\infty} \beta_i \eps_i + k\Lambda_0 + j\delta$.
  The following is true:
  \begin{enumerate}
    \item the action of $w_\lambda$ on $\beta$ is described by
      \begin{gather*}
      w_\lambda \beta = \sum_{c=1}^{\infty} \beta_{-\bar{p}_c}\eps_{-c-b+1} +\sum_{i=1}^b
      (\beta_{q_i+1}-k)\eps_{1-i} + \\
      + \sum_{i=1}^b (k+\beta_{-p_i})\eps_i + \sum_{c=1}^{\infty}
      \beta_{\bar{q}_{c}+1}\eps_{b+c} + \\
        + k\Lambda_0 + \left(j - kb + \sum_{i=1}^b (- \beta_{-p_i} + \beta_{q_i+1})\right)\delta ,
      \end{gather*}
    \item the dot action of $w_\lambda$ on $\beta$ is described by
      \begin{gather*}
      w_\lambda \cdot\beta = \sum_{c=1}^{\infty} (\beta_{-\bar{p}_c}+\bar{p}_c-c-b+1)\eps_{-c-b+1} +\sum_{i=1}^b
      (\beta_{q_i+1}-k-i-q_i)\eps_{1-i} + \\
      + \sum_{i=1}^b (k+\beta_{-p_i}+i+p_i)\eps_i + \sum_{c=1}^{\infty}
      (\beta_{\bar{q}_c+1} - \bar{q}_c + b + c-1)\eps_{b+c} + \\
        + k\Lambda_0 + \left(j - (k+1)b + \sum_{i=1}^b (-\beta_{-p_i} +
          \beta_{q_i+1} - p_i - q_i) \right)\delta ,
      \end{gather*}
    \item the weight $w_\lambda\cdot 0$ can be also described using $\lambda$ as
      follows:
      \begin{gather*}
        w_\lambda \cdot 0 = \sum_{i=1}^{\infty} -\lambda^t_i\eps_{1-i} +
        \sum_{i=1}^{\infty} \lambda_i\eps_i - |\lambda| \delta,
      \end{gather*}
    \item the element $w_\lambda$ has a reduced decomposition
      \begin{gather*}
      w_\lambda =\\
      s_0 s_{-1} \ldots s_{-p_1} s_1 \ldots s_{q_1} \\
      s_0 s_{-1} \ldots s_{-p_2} s_1 \ldots s_{q_2}\\
      \ldots\\
      s_0 s_{-1} \ldots s_{-p_b} s_1 \ldots s_{q_b}.\\
      \end{gather*}
  \end{enumerate}
\end{lemma}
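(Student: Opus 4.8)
The plan is to isolate the two genuinely computational parts, (1) and (4), since (2) and (3) are formal consequences. Statement (3) is immediate from the hypothesis $I(w_\lambda)=S_\lambda$ recorded just before: by \cref{stm:dot_action_by_negatives_sum} we have $w_\lambda\cdot 0 = -\sum_{\alpha\in S_\lambda}\alpha$, and since $S_\lambda=\{\delta+\eps_{1-j}-\eps_i : \lambda_i\geq j\}$ this sum regroups as $-|\lambda|\delta-\sum_{j}\lambda^t_j\eps_{1-j}+\sum_i\lambda_i\eps_i$, simply by counting boxes of $\lambda$ first by rows and then by columns. Statement (2) then follows from (1) and (3) via the identity $w\cdot\beta = w\beta+(w\cdot 0)$, valid for the dot action with any linear part: it holds on a single reflection, $s_x\cdot\beta = s_x\beta-\alpha_x = s_x\beta+(s_x\cdot 0)$, and extends to products since $(uv)\cdot\beta=u\cdot(v\cdot\beta)$. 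So everything reduces to the linear action (1) and the reduced word (4).

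For these I would first set up the permutation realization of $\aff{W}_\infty$. Every generator $s_i$, including $s_0$, acts on the $\eps$-indices as the transposition $(i,i+1)$; the only extra data is the $\delta$-shift produced by $s_0$, which I compute from $\langle\cdot,\alpha_0^\vee\rangle$ and which records each crossing of the gap between indices $0$ and $1$. Thus $w_\lambda$ is encoded by a finitely supported permutation $\pi_\lambda$ of $\Z$ together with this $\delta$-cocycle, and the content of (1) is the explicit form of $\pi_\lambda$ and of the total shift $j-kb+\sum_i(-\beta_{-p_i}+\beta_{q_i+1})$. I would pin down $\pi_\lambda$ by two facts: that $w_\lambda\in\wx_\infty$ forces $\pi_\lambda^{-1}$ to be increasing separately on the positive indices $\{1,2,\dots\}$ and on the non-positive indices $\{\dots,-1,0\}$ (the coset-representative characterization of $X$-reducedness, using that $X_\infty$ splits into two half-chains at $\alpha_0$), and that the $\delta$-coefficient-$1$ inversions of $\pi_\lambda$ must form exactly the down-set $S_\lambda$. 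Reading off the monotone arrangement compatible with the Young diagram $S_\lambda$ yields precisely the four-part formula for $\pi_\lambda$, where the interleaving of $\{p_i\}$ with its complement $\{\bar p_c\}$ and of $\{q_i\}$ with $\{\bar q_c\}$ is exactly the bookkeeping of which indices land in negative versus positive target slots.

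With $\pi_\lambda$ in hand, (4) follows from a length count: the word has $\sum_i(p_i+1+q_i)=|\lambda|$ letters, and since $I(w_\lambda)=S_\lambda$ has $|\lambda|$ elements, the word is reduced once one checks that its inversion roots, computed by \cref{stm:full_description_of_negatives} as $s_{x_1}\cdots s_{x_{i-1}}(\alpha_{x_i})$, are distinct and exhaust $S_\lambda$. I would organize this through the blocks $B_i=s_0 s_{-1}\cdots s_{-p_i}s_1\cdots s_{q_i}$, computing each block as a single cycle on the relevant indices, and induct on the number $b$ of Frobenius hooks via the factorization $w_\lambda=B_1\,w_{\lambda'}$, where $\lambda'$ is $\lambda$ with its first hook removed (so $\lambda'$ has Frobenius coordinates $(p_2,\dots,p_b\mid q_2,\dots,q_b)$ and $w_{\lambda'}=B_2\cdots B_b$). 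One checks $\ell(w_\lambda)=\ell(B_1)+\ell(w_{\lambda'})$ and that the inversion set splits as $S_\lambda=S_{\lambda'}\sqcup(\text{first-hook roots})$.

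The main obstacle is the interaction between blocks near the affine vertex: since every $B_i$ contains $s_0$ and acts on overlapping index ranges around $0$, composing them — and in particular tracking which original coordinates $\beta_{-p_i},\beta_{q_i+1}$ survive to the moment the relevant $s_0$ acts, so that the $\delta$-shift comes out as stated — is the delicate step. This is exactly where the monotone (Frobenius) ordering of the hooks is needed, to guarantee no cancellation and keep the surviving coordinates in the correct slots; I expect this to be the only place where a fully explicit induction is unavoidable. Uniqueness of $w_\lambda$ given its inversion set (the preceding corollary, together with \cref{stm:dot_action_on_zero_is_unique}) lets me confirm the identification of $\pi_\lambda$ without re-deriving the remaining parts independently.
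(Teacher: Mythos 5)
Your proposal is sound, and it is in fact more informative than the paper's own proof, which consists of the single line ``Easy to see using induction on $|\lambda|$'' --- i.e.\ a simultaneous induction on the size of the partition with all four parts carried along. Your organization is genuinely different in two respects. First, you pull (2) and (3) out of the induction entirely: (3) drops out of \cref{stm:dot_action_by_negatives_sum} by summing the roots of $S_\lambda$ over the boxes of $\lambda$ (rows give the $\eps_i$-part, columns the $\eps_{1-j}$-part, the box count gives $-|\lambda|\delta$), and (2) follows from (1) and (3) via the affine-action identity $w\cdot\beta = w\beta + w\cdot 0$, which is valid here since each generator acts affinely with linear part $s_x$ and the dot action is a genuine action. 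This buys a cleaner logical structure: only (1) and (4) need computation. One small point you gloss over: matching the stated coefficients of (2) against the sum of (1) and (3) requires the standard Frobenius-complement identities $\lambda_i = p_i + i$, $\lambda^t_i = q_i + i$ for $i \leq b$ and $\lambda_{b+c} = b+c-1-\bar{q}_c$, $\lambda^t_{b+c} = b+c-1-\bar{p}_c$ for $c \geq 1$; these are true and elementary (they are the beta-number description of the complement sequences), but they should be stated, as they are exactly where the interleaving in (1) turns into the hook data in (2). Second, your permutation realization (each $s_i$, including $s_0$, acting as the transposition $(i,i+1)$ with a $k$- and $\delta$-cocycle carried by $s_0$) together with the monotonicity characterization of $\wx_\infty$ as minimal coset representatives is a legitimate and arguably more transparent substitute for the bare induction; your block factorization $w_\lambda = B_1 w_{\lambda'}$ with $\lambda'$ the partition with the first Frobenius hook removed is consistent (block lengths sum to $\sum_i(p_i+q_i+1) = |\lambda| = |S_\lambda|$, so reducedness is automatic once the word is identified with $w_\lambda$), and your proposed endgame --- identify the word by computing its inversion set or, more cheaply, its dot action on $0$ and invoking \cref{stm:dot_action_on_zero_is_unique} together with the bijection between $\wx_\infty$ and downward closed subsets --- closes the argument without re-deriving the remaining parts. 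You correctly flag the only delicate computation (tracking which coefficients occupy the slots adjacent to the wall when each block's $s_0$ fires, which is what produces the $\pm k$ shifts and the $\delta$-coefficient $j - kb + \sum_i(-\beta_{-p_i}+\beta_{q_i+1})$); the monotone ordering $p_1 > p_2 > \cdots$ and $q_1 > q_2 > \cdots$ is indeed what keeps the blocks from interfering, and the base case $b=1$, $\lambda=(1)$ checks against the direct computation of $s_0\beta$.
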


\begin{proof}
  Easy to see using induction on $|\lambda|$.
\end{proof}

\begin{center}\bf
  Case $O$
\end{center}

\begin{lemma}
  The poset $\aff{\Delta}^{+}_{\infty}\setminus \Delta^{+}_{\infty}$ is
  isomorphic to the poset $\{(i,j) \mid 1\leq  i < j\}$ via the map 
  $$
  (i,j) \mapsto \delta
  - \eps_{i} - \eps_j\in \aff{\Delta}^{+}_{\infty}\setminus \Delta^{+}_{\infty},
  $$
  where we have $(i,j)\leq (i',j')$ if and only
  if $i\leq i'$ and $j\leq j'$.
\end{lemma}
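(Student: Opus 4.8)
The plan is to verify two things: that the displayed map is a bijection onto $\aff{\Delta}^{+}_{\infty}\setminus\Delta^{+}_{\infty}$, and that it is simultaneously order-preserving and order-reflecting for the partial order introduced just before the $GL$ case. First I would read off the bijection directly from the explicit root data in the case $O$: there $\aff{\Delta}^{+}_{\infty}=\{\eps_i-\eps_j,\ \delta-\eps_i-\eps_j\mid 1\leq i<j\}$, and as noted in the proof of \cref{stm:reduced_using_roots} the finite subsystem $\Delta^{+}_{\infty}$ consists exactly of the roots with vanishing $\delta$-coefficient, namely $\{\eps_i-\eps_j\mid 1\leq i<j\}$. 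Hence $\aff{\Delta}^{+}_{\infty}\setminus\Delta^{+}_{\infty}=\{\delta-\eps_i-\eps_j\mid 1\leq i<j\}$, and since the $\eps_i$ are linearly independent these roots are pairwise distinct, so $(i,j)\mapsto\delta-\eps_i-\eps_j$ is indeed a bijection from $\{(i,j)\mid 1\leq i<j\}$.

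Next I would unwind the poset structure. By the definition of the order, $\delta-\eps_i-\eps_j\leq\delta-\eps_{i'}-\eps_{j'}$ holds if and only if the difference $\gamma=(\delta-\eps_{i'}-\eps_{j'})-(\delta-\eps_i-\eps_j)=\eps_i+\eps_j-\eps_{i'}-\eps_{j'}$ is a sum of roots of $\Delta^{+}_{\infty}$, equivalently a $\Z_{\geq 0}$-combination of the simple roots $\eps_k-\eps_{k+1}$, $k\geq 1$ (each positive root $\eps_a-\eps_b$ is a sum of consecutive simple roots, and conversely, so the two cones coincide). Thus everything reduces to the combinatorial claim: for $1\leq i<j$ and $1\leq i'<j'$, the weight $\gamma$ lies in the positive root cone if and only if $i\leq i'$ and $j\leq j'$.

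For the combinatorial claim I would use the partial-sum criterion. Writing $\gamma=\sum_{k\geq 1}c_k\eps_k$ and $b_N=\sum_{k=1}^{N}c_k$, and noting that the total coefficient $\sum_k c_k=1+1-1-1$ is $0$ so that $b_N=0$ for $N$ large, a telescoping identity gives $\gamma=\sum_{k\geq 1}b_k(\eps_k-\eps_{k+1})$; since the simple roots are linearly independent, $\gamma$ lies in the positive cone exactly when every $b_N\geq 0$. Here $b_N$ is the number of elements of $\{i,j\}$ that are $\leq N$ minus the number of elements of $\{i',j'\}$ that are $\leq N$. If $i\leq i'$ and $j\leq j'$, then $[\,i'\leq N\,]\leq[\,i\leq N\,]$ and $[\,j'\leq N\,]\leq[\,j\leq N\,]$ for every $N$, whence $b_N\geq 0$. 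Conversely, if $i>i'$ then $N=i'$ gives $b_{i'}=-1$, and if $j>j'$ then $N=j'$ gives $b_{j'}\leq -1$; either case pushes $\gamma$ out of the cone. This proves both order-preservation and order-reflection at once, completing the argument.

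The main (and essentially the only) obstacle is this combinatorial claim. The delicate point is getting the direction of the inequalities right and ensuring the argument is uniform across possible coincidences among $i,j,i',j'$; the Iverson-bracket/partial-sum formulation handles these automatically, so I expect no further difficulty.
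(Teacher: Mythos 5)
Your proof is correct and is essentially the paper's argument carried out in full: the paper disposes of this lemma with ``clear by examination of the explicit description of the root system,'' and your verification — reading off $\aff{\Delta}^{+}_{\infty}\setminus\Delta^{+}_{\infty}=\{\delta-\eps_i-\eps_j\mid 1\leq i<j\}$ from the case-$O$ root data and checking via the partial-sum criterion that $\eps_i+\eps_j-\eps_{i'}-\eps_{j'}$ lies in the $\Z_{\geq 0}$-span of the simple roots $\eps_k-\eps_{k+1}$ exactly when $i\leq i'$ and $j\leq j'$ — is precisely the inspection being invoked. The telescoping/Iverson-bracket bookkeeping is sound (in particular the converse cases use $i'<j'$ correctly to force a negative partial sum), so there is nothing to fix.
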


\begin{proof}
  It is clear by examination of the explicit description of the root system.
\end{proof}

\begin{corollary}
  For a decreasing sequence $\{ p_i \}_{i=1}^b$, let $S_{p}\subset \aff{\Delta}^{+}_{\infty}\setminus
  \Delta^{+}_{\infty}$ be the subset that contains a root $\delta -
  \eps_{i} - \eps_j$ if and only if $p_i + i \geq j$.
  The downward closed subsets of $\aff{\Delta}^{+}_{\infty}\setminus
  \Delta^{+}_{\infty}$ are exactly the slices $S_{p}$.
\end{corollary}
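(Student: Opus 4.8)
The plan is to reduce the statement to elementary combinatorics of the product poset $\{(i,j)\mid 1\le i<j\}$. By the preceding lemma the map $(i,j)\mapsto \delta-\eps_i-\eps_j$ identifies $\aff{\Delta}^{+}_{\infty}\setminus\Delta^{+}_{\infty}$ with this poset, and under this identification $S_p$ becomes $\{(i,j)\mid i<j\le p_i+i\}$, where I read $p_i=0$ for $i>b$ so that only the rows $1\le i\le b$ contribute. Combined with \cref{stm:reduced_using_roots} and the bijection $w\mapsto I(w)$ between $\wx_{\infty}$ and the finite downward closed subsets of $\aff{\Delta}^{+}_{\infty}\setminus\Delta^{+}_{\infty}$ established above, the corollary amounts to the assertion that the finite down-sets of $\{(i,j)\mid 1\le i<j\}$ are precisely the sets $S_p$; here \emph{decreasing} must be read as \emph{strictly} decreasing, for the reason explained below.

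First I would check that $S_p$ is a down-set for strictly decreasing $p=(p_1,\dots,p_b)$. For fixed $i$ the columns with $(i,j)\in S_p$ form the interval $i<j\le p_i+i$, which is closed under decreasing $j$; across rows the top column $M_i:=p_i+i$ satisfies $M_i-M_{i+1}=p_i-p_{i+1}-1\ge0$ exactly because $p_i>p_{i+1}$, so $M_i$ is non-increasing and $S_p$ is closed under decreasing $i$ as well, which is precisely product-order downward closure. Conversely, given a finite down-set $D$ I would reconstruct $p$ row by row: closure under decreasing $j$ forces each nonempty row $\{j\mid(i,j)\in D\}$ to be an initial interval $\{i+1,\dots,M_i\}$, and I set $p_i:=M_i-i\ge1$; closure under decreasing $i$ gives $M_i\ge M_{i+1}$, hence $p_i\ge p_{i+1}+1$, so $p$ is strictly decreasing, and finiteness of $D$ makes the sequence terminate and fixes $b$. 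By construction $D=S_p$.

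There is no serious obstacle here; the two points that need care are both about the parametrization rather than the geometry. The first is that strictness is forced, not assumed: if $p_i=p_{i+1}$ then $M_{i+1}=M_i+1>M_i$, contradicting the cross-row closure $M_i\ge M_{i+1}$, so a merely weakly decreasing $p$ never gives a down-set. The second is the boundary convention: allowing $p_b=0$ would leave row $b$ empty and give $S_{(p_1,\dots,p_{b-1},0)}=S_{(p_1,\dots,p_{b-1})}$, so to make $p\mapsto S_p$ an honest bijection I would normalize to $p_b\ge1$, i.e.\ let $b$ be the number of nonempty rows. With this normalization the finite down-sets of $\{(i,j)\mid 1\le i<j\}$ biject with strictly decreasing sequences of positive integers, and composing with the two transport bijections above yields the claimed explicit description of $\wx_{\infty}$.
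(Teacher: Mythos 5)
Your proof is correct and follows the route the paper intends: the corollary is stated there without proof precisely because, once the preceding lemma identifies $\aff{\Delta}^{+}_{\infty}\setminus\Delta^{+}_{\infty}$ with the poset $\{(i,j)\mid 1\leq i<j\}$ and the earlier corollary identifies $\wx_{\infty}$ with finite downward closed subsets, everything reduces to the elementary staircase description of finite down-sets, which you verify in both directions. Your two clarifying remarks --- that ``decreasing'' must be read strictly (consistent with the paper's convention for Frobenius coordinates) and that normalizing $p_b\geq 1$ is needed for $p\mapsto S_p$ to be injective --- are exactly the details the paper leaves implicit.
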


\begin{center}\bf
  Case $Sp$
\end{center}

\begin{lemma}
  The poset $\aff{\Delta}^{+}_{\infty}\setminus \Delta^{+}_{\infty}$ is
  isomorphic to the poset $\{(i,j) \mid 1\leq i \leq j\}$ via the map 
  $$
  (i,j) \mapsto \delta
  - \eps_{i} - \eps_j\in \aff{\Delta}^{+}_{\infty}\setminus \Delta^{+}_{\infty},
  $$
  where we have $(i,j)\leq (i',j')$ if and only
  if $i\leq i'$ and $j\leq j'$.
\end{lemma}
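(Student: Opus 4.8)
The plan is to first identify the underlying set and then match the two orders. Reading off the $Sp$ root data, the positive roots of $\delta$-coefficient $0$ are exactly $\Delta^{+}_{\infty} = \{\eps_i - \eps_j \mid 1 \le i < j\}$, while the positive roots of $\delta$-coefficient $1$ are $\{\delta - \eps_i - \eps_j \mid 1 \le i < j\}$ together with $\{\delta - 2\eps_i \mid 1 \le i\}$. Writing $\delta - 2\eps_i = \delta - \eps_i - \eps_i$, these are precisely the $\delta - \eps_i - \eps_j$ with $1 \le i \le j$, so $(i,j) \mapsto \delta - \eps_i - \eps_j$ is a bijection from $\{(i,j)\mid 1\le i\le j\}$ onto $\aff{\Delta}^{+}_{\infty}\setminus\Delta^{+}_{\infty}$.

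Next I would unwind the order. For $\alpha = \delta - \eps_i - \eps_j$ and $\beta = \delta - \eps_{i'} - \eps_{j'}$ the relation $\alpha \le \beta$ means that $\beta - \alpha = \eps_i + \eps_j - \eps_{i'} - \eps_{j'}$ is a sum of positive roots of $\delta$-coefficient $0$, i.e. a nonnegative integer combination of the $\eps_a - \eps_b$ with $a < b$. Here I would invoke the standard description of the type-$A$ positive-root cone: a finitely supported $\sum_k c_k \eps_k$ is such a combination if and only if $\sum_k c_k = 0$ and every partial sum $S_K := \sum_{k \le K} c_k$ is nonnegative, the converse direction following from the telescoping identity $\sum_k c_k \eps_k = \sum_k S_k (\eps_k - \eps_{k+1})$. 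For $\beta - \alpha$ the total sum vanishes automatically, so only the partial-sum condition is at stake.

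Finally I would evaluate the partial sums. Writing $N^{+}(K)$ for the number of entries of $(i,j)$ that are $\le K$ and $N^{-}(K)$ for the analogous count for $(i',j')$, both with multiplicity, one has $S_K = N^{+}(K) - N^{-}(K)$. If $i \le i'$ and $j \le j'$ then $N^{+}(K) \ge N^{-}(K)$ for every $K$, so $S_K \ge 0$ throughout. Conversely, assuming $S_K \ge 0$ for all $K$, testing at $K = i-1$ forces $i \le i'$ (otherwise $N^{+}(i-1) = 0$ while $N^{-}(i-1) \ge 1$), and testing at $K = j-1$ then forces $j \le j'$. Hence the map both preserves and reflects the order and is the desired poset isomorphism.

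The computation is entirely a reading of the root data, exactly as in the $GL$ and $O$ lemmas above; the only point needing a little care is to treat the diagonal roots $\delta - 2\eps_i$ (the case $i = j$, where $N^{+}$ jumps by $2$) uniformly with the off-diagonal roots in the partial-sum bookkeeping, so that the single combinatorial argument covers both.
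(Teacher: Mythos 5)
Your proof is correct, and it follows the same route as the paper, whose entire proof is the one-line remark that the claim is ``clear by examination of the explicit description of the root system'': you simply read off the root data for the $Sp$ case, absorb the roots $\delta-2\eps_i$ as the diagonal pairs $(i,i)$, and verify the order via the standard partial-sum description of the type-$A$ positive cone. The partial-sum bookkeeping (including the converse tests at $K=i-1$ and $K=j-1$) is exactly the verification the paper leaves implicit, so there is nothing to add.
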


\begin{proof}
  It is clear by examination of the explicit description of the root system.
\end{proof}

\begin{corollary}
  For a decreasing sequence $\{ q_i \}_{i=1}^b$, let $S_{q}\subset \aff{\Delta}^{+}_{\infty}\setminus
  \Delta^{+}_{\infty}$ be the subset that contains a root $\delta +
  \eps_{i} - \eps_j$ if and only if $q_j + j \geq i$.
  The downward closed subsets of $\aff{\Delta}^{+}_{\infty}\setminus
  \Delta^{+}_{\infty}$ are exactly the slices $S_{q}$.
\end{corollary}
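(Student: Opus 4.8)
The plan is to deduce everything from the poset identification just stated together with the bijection, established above, between $\wx_{\infty}$ and the finite downward closed subsets of $\aff{\Delta}^{+}_{\infty}\setminus \Delta^{+}_{\infty}$. So the only thing left is to identify the finite downward closed subsets (order ideals) of the poset $P = \{(i,j)\mid 1\le i\le j\}$, ordered componentwise, with the sets $S_q$; this runs parallel to the $O$ case, the one genuine difference being that $P$ now contains the diagonal $i=j$.

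First I would describe an order ideal $S\subseteq P$ row by row. For fixed $i$, the slice $\{j\mid (i,j)\in S\}$ is downward closed in $j$ and contained in $\{j\mid j\ge i\}$, so it is a contiguous block $\{i,i+1,\dots,i+a_i-1\}$ starting at the diagonal, of some length $a_i\ge 0$ (empty exactly when $a_i=0$). The remaining closure relations are the vertical ones: $(i,j)\in S$ forces $(i-1,j)\in S$. Reading this off for the top box of row $i$ gives $i+a_i-1\le (i-1)+a_{i-1}-1$, i.e.\ $a_{i-1}\ge a_i+1$ whenever $a_i>0$, and it also forbids $a_i=0<a_{i+1}$ (the diagonal box $(i+1,i+1)$ would force the missing box $(i,i+1)$). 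Hence $S$ is an order ideal precisely when $(a_1,a_2,\dots)$ is a strictly decreasing sequence of positive integers followed by zeros; conversely every such sequence visibly yields an order ideal, and finiteness is automatic. Recording this sequence as $q$ and unwinding the membership condition $i\le j\le i+a_i-1$ into the stated inequality exhibits these order ideals as exactly the sets $S_q$, which gives the corollary.

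The main point to get right is the bookkeeping around the diagonal. In the $O$ case every row lies strictly above the diagonal, so a zero row length is harmless; here a nonempty row always contains its diagonal box $(i,i)$, and it is precisely this extra box that (a) forces the recorded sequence to be \emph{strictly} decreasing rather than merely weakly decreasing, and (b) fixes the off-by-one in the bound relating $q_i$, $i$ and $j$. Once this is pinned down, the rest is the same direct translation as in the other two types.
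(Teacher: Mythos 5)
Your argument is correct and follows exactly the route the paper intends: the corollary is stated as an immediate consequence of the preceding poset lemma, and your row-by-row classification of the finite order ideals of $\{(i,j)\mid 1\le i\le j\}$ by strictly decreasing row lengths $a_1>a_2>\cdots>a_b>0$ (with the two diagonal subtleties — strictness of the decrease, and the impossibility of an empty row below a nonempty one — handled correctly) supplies precisely the step the paper leaves implicit. One caveat on the final "unwinding": the printed membership condition is garbled by typos (the root should be $\delta-\eps_i-\eps_j$, and the inequality should be indexed as $q_i+i\ge j$, which matches your description with $q_i=a_i-1\ge 0$ strictly decreasing, or equivalently $q_i+i>j$ with $q_i=a_i$ positive), so your translation should be read against this corrected form of $S_q$ rather than the literal inequality $q_j+j\ge i$, which as printed would cut out an infinite set.
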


\subsection{Explicit formulas}
\label{sec:explicit_formulas}

We will only explore the combination of the results from
\cref{sec:stable_formulas,sec:explicit_reduced} in the case $GL$.

Putting the explicit description of $\wx_{\infty}$ into
\cref{stm:limit_transcendental}, we get the following.
Let $q$ denote the character of $z^{-1}$.

\begin{theorem}
  \label{stm:limit_transcendental_exp}
  Suppose we have two partitions $\mu$ and $\nu$,
  and a nonnegative integer $k$ such that $k\geq \mu_1+\nu_1$.
  Then in $K(\drep(G_t)^a)$ we have
  \begin{equation*}
    \chr L([\mu,\nu]+k\Lambda_0) = \sum_{\text{partition}\ \lambda} (-1)^{|\lambda|} q^{\delta(\lambda,\mu,\nu)} \chr M(\lambda\cdot [\mu,\nu]+k\Lambda_0)
  \end{equation*}
  for any $t\in \kk$ transcendental over $\Q$,
  where the given a partition $\lambda$ with the Frobenius coordinates
  $\lambda = (p_1,\ldots,p_b \mid q_1,\ldots,q_b)$, we define
  \begin{gather*}
    \delta(\lambda,\mu,\nu) = |\lambda| + \sum_{i=1}^b (k - \mu_{q_i+1} -
    \nu_{p_i+1}),\\
    \lambda\cdot[\mu,\nu] = \\
    [
    (k-\nu_{p_1+1},k-\nu_{p_2+1},\ldots,k-\nu_{p_b+1},
    \mu_1,\ldots,\hat{\mu_{q_1+1}},\ldots,\hat{\mu_{q_2+1}},\ldots,)
    + \lambda
    ,\\
    (k-\mu_{q_1+1},k-\mu_{q_2+1},\ldots,k-\mu_{q_b+1},
    \nu_1,\ldots,\hat{\nu_{q_1+1}},\ldots,\hat{\nu_{q_2+1}},\ldots,)
    + \lambda^t
    ].
  \end{gather*}
\end{theorem}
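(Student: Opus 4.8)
The plan is to specialize the abstract stable Weyl--Kac identity of \cref{stm:limit_transcendental} to the weight $\phi = [\mu,\nu] + k\Lambda_0 \in \aff{P}_{\infty}$ and then rewrite the sum over $\wx_{\infty}$ using its explicit parametrization by partitions. First I would check that $\phi$ is dominant: the only nontrivial coroot to test is the affine one $\alpha_0^{\vee} = E_0 - E_1 + c$, on which $\phi$ evaluates to $-\nu_1 - \mu_1 + k$, nonnegative exactly because $k \geq \mu_1 + \nu_1$; the remaining coroots merely record that $\mu$ and $\nu$ are partitions. Hence \cref{stm:limit_transcendental} gives $\chr L(\phi) = \sum_{w\in\wx_{\infty}} (-1)^{l(w)} \chr M(w\cdot\phi)$, and by the bijection $\lambda \leftrightarrow w_\lambda$ with $I(w_\lambda) = S_\lambda$ from \cref{sec:explicit_reduced} I would reindex this as a sum over all partitions $\lambda$.

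Next I would compute the three data attached to each $w_\lambda$. The sign is immediate: by \cref{stm:full_description_of_negatives}, $l(w_\lambda) = |I(w_\lambda)| = |S_\lambda| = |\lambda|$, so $(-1)^{l(w_\lambda)} = (-1)^{|\lambda|}$. For the weight I would substitute $\phi$ into the explicit dot-action formula for $w_\lambda$ of \cref{sec:explicit_reduced}, i.e.\ set $\beta_i = \mu_i$ for $i \geq 1$, $\beta_{1-j} = -\nu_j$ for $j \geq 1$, and zero $\delta$-component. Reading off the coefficient of $\delta$ in $w_\lambda\cdot\phi$ gives $-(k+1)b + \sum_{i=1}^b (\nu_{p_i+1} + \mu_{q_i+1} - p_i - q_i)$; applying the Frobenius identity $|\lambda| = \sum_{i=1}^b (p_i + q_i + 1)$ rewrites this as exactly $-\delta(\lambda,\mu,\nu)$.

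Matching the $\eps$- and $\Lambda_0$-parts of $w_\lambda\cdot\phi$ with $\lambda\cdot[\mu,\nu] + k\Lambda_0$ is the heart of the argument. Here I would use the Frobenius identities $\lambda_i = p_i + i$ and $(\lambda^t)_i = q_i + i$ for $1\leq i\leq b$, together with the below-diagonal identity $\lambda_{b+c} = b + c - 1 - \bar{q}_c$ and its transpose, to see that the positive-index coefficients assemble into the entries $(k-\nu_{p_i+1}) + \lambda_i$ in the first $b$ slots and into the $\mu$-sequence with the entries $\mu_{q_i+1}$ deleted, shifted by $\lambda$, in the remaining slots --- that is, the first component of $\lambda\cdot[\mu,\nu]$. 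The negative-index coefficients give, after the sign built into the embedding of a bipartition, the second component $(k-\mu_{q_i+1}) + (\lambda^t)_i$ followed by the $\nu$-sequence with the entries $\nu_{p_i+1}$ deleted and shifted by $\lambda^t$. Since $w_\lambda \in \wx_{\infty}$ and $\phi$ is dominant, \cref{stm:wx_gives_x_dominant} ensures $w_\lambda\cdot\phi$ is $X$-dominant, so these sequences are automatically weakly decreasing and genuinely form a bipartition. Combining the three computations, $w_\lambda\cdot\phi = (\lambda\cdot[\mu,\nu]) + k\Lambda_0 - \delta(\lambda,\mu,\nu)\delta$; and since lowering the grading of a parabolic Verma module by $c\delta$ multiplies its character by $q^c$ (as $q = \chr z^{-1}$ sits in grading $-\delta$), we get $\chr M(w_\lambda\cdot\phi) = q^{\delta(\lambda,\mu,\nu)} \chr M(\lambda\cdot[\mu,\nu] + k\Lambda_0)$, which assembles into the claimed formula.

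The main obstacle is the purely combinatorial bookkeeping in the previous paragraph: tracking how the four families of $\eps$-coefficients produced by the dot-action formula (those indexed by $1 \leq i \leq b$ and those indexed by $c \geq 1$, on each of the positive and negative sides) recombine through the interleaving of the Frobenius data $p, q$ with the complementary gap sequences $\bar p, \bar q$. I expect the delicate points to be keeping the deletions $\widehat{\mu_{q_i+1}}$ and $\widehat{\nu_{p_i+1}}$ consistent with the reindexing by $\bar{q}_c$ and $\bar{p}_c$, and verifying the identity $\lambda_{b+c} = b+c-1-\bar{q}_c$ that converts the below-diagonal parts of $\lambda$ into gaps of its Frobenius data; everything else is a direct substitution.
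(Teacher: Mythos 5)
Your proposal is correct and takes essentially the same route as the paper: the paper obtains \cref{stm:limit_transcendental_exp} precisely by substituting the explicit parametrization of $\wx_{\infty}$ by partitions (the bijection $\lambda\mapsto w_{\lambda}$ with $I(w_{\lambda})=S_{\lambda}$ and the dot-action formula of \cref{sec:explicit_reduced}) into \cref{stm:limit_transcendental}, which is exactly your plan, and your bookkeeping (dominance via $\alpha_0^{\vee}$, $l(w_{\lambda})=|\lambda|$, the Frobenius identities for the $\delta$- and $\eps$-coefficients) correctly fills in what the paper leaves as a direct substitution. As a minor remark, your deletions $\hat{\nu_{p_i+1}}$ in the second component are the ones actually forced by the dot-action formula, so they agree with the intended statement (the subscripts $\hat{\nu_{q_i+1}}$ in the displayed theorem appear to be a typo).
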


In \cite[Section 6]{EtingofII} Etingof has deduced a formula for the character with the
dominant weight $\Lambda_0$, but using different techniques. We wrote a
Python script to check that the two formulas coincide up to $q^{10}$. Here we provide
the series only up to $q^5$, denoting the charactrer of $L_{[\lambda,\mu]}$ by the Young diagrams
of $[\lambda,\mu]$:
\begin{gather*}
    \chr L(\Lambda_0) = \\
    \left[\,\cdot\,,\cdot\,\right] +q
    \left[\,\ydiagram{1}\,,\ydiagram{1}\,\right] +q^2
    \left(\left[\,\cdot\,,\cdot\,\right] + 2
      \left[\,\ydiagram{1}\,,\ydiagram{1}\,\right] +
      \left[\,\ydiagram{1,1}\,,\ydiagram{1,1}\,\right]\right)
    \\
    +q^3 \left(2 \left[\,\cdot\,,\cdot\,\right] + 4
      \left[\,\ydiagram{1}\,,\ydiagram{1}\,\right] + 2
      \left[\,\ydiagram{1,1}\,,\ydiagram{1,1}\,\right] +
      \left[\,\ydiagram{1,1}\,,\ydiagram{2}\,\right] +
      \left[\,\ydiagram{1,1,1}\,,\ydiagram{1,1,1}\,\right] +
      \left[\,\ydiagram{2}\,,\ydiagram{1,1}\,\right]\right)
    \\
    +q^4
    \left(4 \left[\,\cdot\,,\cdot\,\right] + 8 \left[\,\ydiagram{1}\,,\ydiagram{1}\,\right] + 6 \left[\,\ydiagram{1,1}\,,\ydiagram{1,1}\,\right] + 2 \left[\,\ydiagram{1,1}\,,\ydiagram{2}\,\right] + 2 \left[\,\ydiagram{1,1,1}\,,\ydiagram{1,1,1}\,\right] +\right.\\
    \left. + \left[\,\ydiagram{1,1,1}\,,\ydiagram{2,1}\,\right] +
      \left[\,\ydiagram{1,1,1,1}\,,\ydiagram{1,1,1,1}\,\right] + 2
      \left[\,\ydiagram{2}\,,\ydiagram{1,1}\,\right] +
      \left[\,\ydiagram{2}\,,\ydiagram{2}\,\right] +
      \left[\,\ydiagram{2,1}\,,\ydiagram{1,1,1}\,\right]\right)
    \\
    +q^5
    \left(6 \left[\,\cdot\,,\cdot\,\right]+ 16 \left[\,\ydiagram{1}\,,\ydiagram{1}\,\right] + 12 \left[\,\ydiagram{1,1}\,,\ydiagram{1,1}\,\right] + 6 \left[\,\ydiagram{1,1}\,,\ydiagram{2}\,\right] + 6 \left[\,\ydiagram{1,1,1}\,,\ydiagram{1,1,1}\,\right] + 3 \left[\,\ydiagram{1,1,1}\,,\ydiagram{2,1}\,\right] + 2 \left[\,\ydiagram{1,1,1,1}\,,\ydiagram{1,1,1,1}\,\right] + \right.\\
    \left.+ \left[\,\ydiagram{1,1,1,1}\,,\ydiagram{2,1,1}\,\right] +
      \left[\,\ydiagram{1,1,1,1,1}\,,\ydiagram{1,1,1,1,1}\,\right] + 6
      \left[\,\ydiagram{2}\,,\ydiagram{1,1}\,\right] + 2
      \left[\,\ydiagram{2}\,,\ydiagram{2}\,\right] + 3
      \left[\,\ydiagram{2,1}\,,\ydiagram{1,1,1}\,\right] +
      \left[\,\ydiagram{2,1}\,,\ydiagram{2,1}\,\right] +
      \left[\,\ydiagram{2,1,1}\,,\ydiagram{1,1,1,1}\,\right]\right) + \\
    + \ldots
\end{gather*}

Another particular case is the Weyl denominator formula.

\begin{corollary}
  In $K(\drep(G_t)^a)$ we have
  \begin{gather*}
  \frac{1}{\chr M(0)} = \sum_{\text{partition}\ \lambda} (-1)^{|\lambda|}
  q^{|\lambda|}\chr L_{[\lambda,\lambda^t]}
  \end{gather*}
  for any $t\in \kk$ transcendental over $\Q$.
\end{corollary}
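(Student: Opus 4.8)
The plan is to read this off \cref{stm:limit_transcendental_exp} by specializing to $\mu=\nu=\emptyset$ and $k=0$; the hypothesis $k\geq\mu_1+\nu_1$ then holds with equality, so the specialization is legitimate. On the left-hand side $L([\emptyset,\emptyset]+0\Lambda_0)=L(0)$ is the trivial module $\triv$, exactly as in the corollary following \cref{stm:limit_transcendental}, so its character is the unit $1\in K(\drep(G_t)^a)$.

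First I would collapse the combinatorial data attached to a partition $\lambda=(p_1,\dots,p_b\mid q_1,\dots,q_b)$. Since every $\mu_\bullet$ and $\nu_\bullet$ is $0$ and $k=0$, the definition of $\delta$ reduces to $\delta(\lambda,\emptyset,\emptyset)=|\lambda|$, and both base sequences entering $\lambda\cdot[\emptyset,\emptyset]$ become identically zero (the entries $k-\nu_{p_i+1}$ and $k-\mu_{q_i+1}$ vanish, and the $\mu$- and $\nu$-tails are empty), whence $\lambda\cdot[\emptyset,\emptyset]=[\lambda,\lambda^t]$. Substituting into \cref{stm:limit_transcendental_exp} gives
\[
1=\sum_{\text{partition }\lambda}(-1)^{|\lambda|}q^{|\lambda|}\,\chr M([\lambda,\lambda^t]+0\Lambda_0).
\]

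It remains to replace Verma characters by simple highest-weight characters. The PBW isomorphism $M(\phi)\iso U(\lie{u}^{-})\tens L_\phi$ in $\cat{C}^a$ --- the same factorization already used in the proof of \cref{stm:relative_finite_kac} --- gives $\chr M(\phi)=\chr M(0)\cdot\chr L_\phi$, with $\chr M(0)=\chr U(\lie{u}^{-})$. Applying this with $\phi=[\lambda,\lambda^t]+0\Lambda_0$ and pulling out the common factor $\chr M(0)$ rewrites the displayed identity as $1=\chr M(0)\cdot\sum_\lambda(-1)^{|\lambda|}q^{|\lambda|}\chr L_{[\lambda,\lambda^t]}$; dividing by $\chr M(0)$ (invertible, since its grading-$(0,0)$ component is $[\triv]=1$) yields the claim.

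I do not expect a genuine obstacle, as all the substance lives in \cref{stm:limit_transcendental_exp}. The only step needing care is the grading bookkeeping: one must confirm that the factor $q^{|\lambda|}=\chr z^{-|\lambda|}$ matches the $\delta$-degree $-|\lambda|$ of $w_\lambda\cdot 0$, and that its finite part $\sum_i\lambda_i\eps_i-\sum_i\lambda^t_i\eps_{1-i}$ is precisely the bipartition $[\lambda,\lambda^t]$ under the $GL$ identification --- both visible from the explicit formula for $w_\lambda\cdot 0$ in \cref{sec:explicit_reduced}. As a cross-check, the same identity follows directly from the $\phi=0$ corollary $1/\chr M(0)=\sum_{w\in\wx_\infty}(-1)^{l(w)}\chr L_{w\cdot 0}$ by reindexing along the bijection $\lambda\mapsto w_\lambda$ and using $l(w_\lambda)=|S_\lambda|=|\lambda|$.
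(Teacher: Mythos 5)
Your proposal is correct and follows essentially the paper's own route: the paper presents this corollary as a particular case of \cref{stm:limit_transcendental_exp} (equivalently, of the corollary to \cref{stm:limit_transcendental} at $\phi=0$ combined with the explicit $GL$ parametrization $\lambda\mapsto w_\lambda$ with $l(w_\lambda)=|\lambda|$ and $w_\lambda\cdot 0 = [\lambda,\lambda^t]-|\lambda|\delta$), and your specialization $\mu=\nu=\emptyset$, $k=0$ together with the factorization $\chr M(\phi)=\chr M(0)\cdot\chr L_\phi$ is exactly that argument. Your grading bookkeeping ($q^{|\lambda|}$ matching the $\delta$-degree $-|\lambda|$) and the invertibility of $\chr M(0)$ are both handled correctly, so there is nothing to add.
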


Also we can specify the Garland formula.

\begin{theorem}
  For a nonnegative $i\in\Z_{\geq 0}$, the $i$-th cohomology
  $H^i(z^{-1}\lie{sl}_{t}[z^{-1}])$ is isomorphic to
  $$
  H^i(z^{-1}\lie{sl}_{t}[z^{-1}]) \iso \bigoplus_{\substack{\text{partition}\ \lambda \\ |\lambda| = i}}
  L_{[\lambda,\lambda^t]}
  $$
  as an $\lie{sl}_t$-module
  for any $t\in \kk\setminus Z(p)$ outside of the zeros of some polynomial
  $p\in\Q[T]$. 
\end{theorem}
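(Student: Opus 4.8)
The plan is to specialize the stable Garland formula---the (unlabeled) theorem asserting $H^i(\lie{u}^-_t) \iso \bigoplus_{w\in\wx_{\infty},\ l(w)=i} L_{w\cdot 0}$ as an $\lie{l}_t$-module for $t$ outside the zeros of some $p\in\Q[T]$---to the case $GL$, in exactly the way \cref{stm:limit_transcendental_exp} specializes \cref{stm:limit_transcendental}. The only new ingredient is the explicit parametrization of $\wx_{\infty}$ by partitions from \cref{sec:explicit_reduced}. First I would record that in the case $GL$ one has $\lie{g}'_t \iso \lie{sl}_t$ and $\lie{u}^-_t = \bigoplus_{j<0}\lie{sl}_t z^j = z^{-1}\lie{sl}_t[z^{-1}]$, while $\lie{sl}_t$ sits inside $\lie{l}_t$; restricting the module structure along this inclusion turns ``as $\lie{l}_t$-module'' into ``as $\lie{sl}_t$-module'', the central and grading generators of $\lie{l}_t$ acting by scalars read off from the grading.

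Next I would feed in the bijection $\lambda \mapsto w_{\lambda}$ between partitions and $\wx_{\infty}$ established in \cref{sec:explicit_reduced}, using two of its properties. For the index set, the displayed reduced decomposition of $w_{\lambda}$ splits into $b$ blocks $s_0 s_{-1}\cdots s_{-p_i}\, s_1\cdots s_{q_i}$, the $i$-th having length $p_i+q_i+1$; hence $l(w_{\lambda}) = \sum_{i=1}^b (p_i+q_i+1) = |\lambda|$, the last equality being the standard box count in the Frobenius coordinates $\lambda = (p_1,\ldots,p_b\mid q_1,\ldots,q_b)$. Thus $\{\,w\in\wx_{\infty} : l(w)=i\,\}$ is exactly $\{\,\lambda : |\lambda| = i\,\}$. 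For the summands, the explicit formula $w_{\lambda}\cdot 0 = \sum_i \lambda_i\eps_i - \sum_i \lambda^t_i\eps_{1-i} - |\lambda|\delta$, together with the dictionary sending a bipartition $[\mu,\nu]$ to $\sum_i \mu_i\eps_i - \sum_j \nu_j\eps_{1-j}$, identifies $L_{w_{\lambda}\cdot 0}$ with $L_{[\lambda,\lambda^t]}$, placed at central charge $0$ and degree $-|\lambda|\delta$.

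Substituting these two facts into the stable Garland formula then yields $H^i(z^{-1}\lie{sl}_t[z^{-1}]) \iso \bigoplus_{|\lambda|=i} L_{[\lambda,\lambda^t]}$, with the defining polynomial $p$ inherited unchanged. I do not expect a serious obstacle here; the only care required is bookkeeping. One should confirm that the $\delta$-grading of $w_{\lambda}\cdot 0$ equals $-l(w_{\lambda})$---both are $-|\lambda|$, so this holds---so that a fixed cohomological degree $i$ collects precisely the partitions of size $i$, and that forgetting the $\Lambda_0$ and $\delta$ grading recovers the intended $\lie{sl}_t$-module $L_{[\lambda,\lambda^t]}\in\drep(GL_t)$ rather than merely a graded shift of it.
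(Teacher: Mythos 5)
Your proposal is correct and follows the same route the paper intends: the paper states this theorem as a direct specialization of the stable Garland formula (whose proof repeats the limit argument of \cref{stm:limit_transcendental}) via the explicit parametrization of $\wx_{\infty}$ in \cref{sec:explicit_reduced}, namely $l(w_\lambda)=\sum_{i=1}^b(p_i+q_i+1)=|\lambda|$ from the reduced decomposition and $w_\lambda\cdot 0=\sum_i\lambda_i\eps_i-\sum_i\lambda^t_i\eps_{1-i}-|\lambda|\delta$, identifying $L_{w_\lambda\cdot 0}$ with $L_{[\lambda,\lambda^t]}$. Your bookkeeping on the $\delta$-grading and the restriction from $\lie{l}_t$ to $\lie{sl}_t$ matches what the paper leaves implicit.
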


We also want to put the categorical dimensions into the explicit formulas.
In \cite[Section 2.5]{EtingofII}, Etingof provides a formula for
$\dim L_{[\lambda,\mu]}$, but it lacks a manifest independence of $l(\lambda)$ and
$l(\mu)$. We prove another formula for $\dim L_{[\lambda,\mu]}$.

\begin{lemma}
  Given two partitions $\lambda$ and $\mu$, the categorical dimension of
  $L_{[\lambda,\mu]}$ is given by the formula
  $$
  \dim L_{[\lambda,\mu]} =
  \frac{ \prod_{k\in \Z} (t-k)^{m_{\lambda,\mu,k}} }%
  {\prod\limits_{\text{hook $h$ in $\lambda$}} l(h) \prod\limits_{\text{hook $h$ in $\mu$}} l(h)} 
  $$
  for all $t\in \kk\setminus Z(\bad_{|\lambda|+|\mu|})$,
  where
  \begin{gather*}
    m_{\lambda,\mu,k} = \#\{ (i,j) \mid i,j\geq 1, (i-\lambda_i) + (j-\mu_j) = k + 1 \} - k.
  \end{gather*}
\end{lemma}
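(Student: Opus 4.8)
The plan is to reduce the identity to a classical dimension computation for $GL_n$ and then evaluate it through the Weyl dimension formula. The categorical dimension of an object is the trace of its identity endomorphism, and such a trace is preserved by every symmetric monoidal functor. Hence, applying the functor $F_n$ and invoking \cref{stm:deligne_finite_connection}, for each integer $n\in I\setminus Z(\bad_{|\lambda|+|\mu|})$ the specialization of $\dim L_{[\lambda,\mu]}$ at $t=n$ equals the ordinary dimension of the irreducible $GL_n$-module $L_{[\lambda,\mu]}$. Both sides of the asserted formula are rational functions of $t$ (the denominator is a positive integer), so it suffices to check the equality after the substitution $t\mapsto n$ for all large $n$: two elements of $\kk(t)$ agreeing at infinitely many integers coincide.

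Next I would write the classical dimension explicitly. The dominant weight attached to $[\lambda,\mu]$ is $w=(\lambda_1,\dots,\lambda_p,0,\dots,0,-\mu_q,\dots,-\mu_1)$ with $p=l(\lambda)$ and $q=l(\mu)$, and in terms of the strictly decreasing beta-numbers $\ell_i=w_i+n-i$ the Weyl dimension formula reads $\dim_\kk L_{[\lambda,\mu]}=\prod_{1\le i<j\le n}(\ell_i-\ell_j)\big/\prod_{1\le i<j\le n}(j-i)$. The multiset $\{\ell_i\}$ breaks into a top block $\lambda_i+n-i$ coming from $\lambda$, a middle block of consecutive integers coming from the zero coordinates, and a bottom block $-\mu_j+j-1$ coming from $-\mu$.

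The core computation is to factor this Vandermonde along the three blocks. The top-to-bottom cross factors are $\ell_i-\ell_j=n+(\lambda_i-i)+(\mu_j-j)+1$, which equals $n-k$ exactly when $(i-\lambda_i)+(j-\mu_j)=k+1$; these reproduce the content factors $(t-k)$ counted by the first term of $m_{\lambda,\mu,k}$. The remaining factors must assemble, via the beta-number form of the hook-length formula $\prod_{\text{hook }h\text{ in }\lambda}l(h)=\prod_i\ell_i!\big/\prod_{i<j}(\ell_i-\ell_j)$, into the two hook-length products: the top block together with its share of the middle block yields $\prod_{\text{hook }h\text{ in }\lambda}l(h)$, and symmetrically the bottom block with $\mu$ yields $\prod_{\text{hook }h\text{ in }\mu}l(h)$. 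As an independent route, one may instead start from the formula of \cite[Section 2.5]{EtingofII} and verify that the two expressions define the same rational function, again by comparing specializations at large integers.

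I expect the main obstacle to be the bookkeeping of the middle block of consecutive beta-numbers, which is shared between the $\lambda$-assembly, the $\mu$-assembly, and the content cross-terms: disentangling its contribution while showing that the dependence on $n$ cancels completely is the delicate step. A convenient device is to replace the finite products by the regularized ratio of $\prod_{i,j\ge1}\big(t-(i-\lambda_i)-(j-\mu_j)+1\big)$ to the corresponding product for the empty bipartition; this collapses the middle contributions, removes $n$ manifestly, and is what accounts for the normalizing term $-k$ in the exponent $m_{\lambda,\mu,k}$. Once this cancellation is verified the exponents collapse exactly to $m_{\lambda,\mu,k}$ and the claimed formula follows.
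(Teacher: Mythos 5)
Your proposal is correct and takes essentially the same route as the paper, whose entire proof is the remark that the formula ``is just an interpolation of the usual Weyl dimension formula'': you make this explicit by specializing at $t=n$ through $F_n$ and \cref{stm:deligne_finite_connection}, comparing rational functions that agree at infinitely many integers, and performing the beta-number/Vandermonde bookkeeping the paper leaves implicit. Your regularization against the empty bipartition is exactly what produces the $-k$ in the exponent $m_{\lambda,\mu,k}$, so the one delicate step you flag is handled correctly.
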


\begin{proof}
  This is just an interpolation of the usual Weyl dimension formula.
\end{proof}

Now we can use it to get a combinatorial identity from the Weyl denominator formula.
What we get is the Nekrasov-Okounkov hook length formula, so in a sense, we provide a categorical interpretation
of the formula in Deligne's category. For the Nekrasov-Okounkov formula references, see \cite[Formula (6.12)]{NO06},
\cite{Han08}. The analogs of this formula for other types also appear in \cite{Westbury2006}
and \cite{Petr15}.

\begin{theorem}
  We have the identity
  \begin{gather*}
    \prod_{i=1}^{\infty} (1-q^i)^{x-1} = \sum_{\text{partition}\ \lambda}
    q^{|\lambda|} \prod_{\text{hook $h$ in $\lambda$}} \left( 1 - \frac{x}{l(h)^2} \right) .
  \end{gather*}
\end{theorem}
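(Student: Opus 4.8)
The plan is to evaluate both sides of the Weyl denominator corollary
\[
\frac{1}{\chr M(0)} = \sum_{\text{partition}\ \lambda} (-1)^{|\lambda|} q^{|\lambda|}\chr L_{[\lambda,\lambda^t]}
\]
under the categorical dimension. Taking categorical dimensions is additive on short exact sequences and multiplicative on tensor products, so it descends to a ring homomorphism $K(\drep(G_t))\to\kk$; keeping track of the $\delta$-grading by the variable $q$ (the character of $z^{-1}$) promotes this to a continuous ring homomorphism $\dim\colon K(\drep(G_t)^a)\to\kk[[q]]$. Since $\chr M(0)=1+O(q)$ is a unit, $\dim$ carries its inverse to the inverse of $\dim\chr M(0)$, so the character identity above yields an honest identity in $\kk[[q]]$ that I then match term by term against the two sides of the claimed formula.

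First I would compute the left-hand side. We have $M(0)=U(\lie{u}^{-}_t)$ with $\lie{u}^{-}_t=\bigoplus_{i\ge 1}\lie{g}'_t z^{-i}$ and $\lie{g}'_t\iso L_{(1),(1)}$, whose categorical dimension is $t^2-1$. By PBW, $U(\lie{u}^{-}_t)\iso S(\lie{u}^{-}_t)$ as graded objects, and the dimension generating function of a symmetric algebra on an object of dimension $d$ placed in degree $i$ is $(1-q^i)^{-d}$; interpolating $d=t^2-1$ gives
\[
\dim\chr M(0) = \prod_{i=1}^{\infty}(1-q^i)^{-(t^2-1)}.
\]
Hence $\dim\tfrac{1}{\chr M(0)}=\prod_{i=1}^{\infty}(1-q^i)^{t^2-1}$, which is exactly the left side of the theorem after the substitution $x=t^2$.

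Next I would treat the right-hand side using the dimension lemma with $\mu=\lambda^t$. Transposition preserves the multiset of hook lengths, so $\prod_{h\in\lambda}l(h)=\prod_{h\in\lambda^t}l(h)$ and the denominator becomes $\bigl(\prod_{h}l(h)\bigr)^2$. The object $L_{[\lambda,\lambda^t]}$ sits in $q$-degree $|\lambda|$, matching the $q^{|\lambda|}$ factor, and the number of hooks of $\lambda$ equals $|\lambda|$, so the sign $(-1)^{|\lambda|}$ is accounted for. A short manipulation shows that proving
\[
(-1)^{|\lambda|}\dim L_{[\lambda,\lambda^t]} = \prod_{\text{hook}\ h\ \text{in}\ \lambda}\Bigl(1-\frac{t^2}{l(h)^2}\Bigr)
\]
is equivalent to the numerator identity $\prod_{k\in\Z}(t-k)^{m_{\lambda,\lambda^t,k}}=\prod_{h}(t-l(h))(t+l(h))$; summing over $\lambda$ then reproduces the right side of the theorem.

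The main obstacle is this numerator identity, which is pure hook-length combinatorics. I would prove it through the beta-number (Maya diagram) encoding: the integers $\{\lambda_i-i:i\ge 1\}$ (particles) and $\{j-1-\lambda^t_j:j\ge 1\}$ (holes) partition $\Z$, a cell of $\lambda$ corresponds to a particle lying to the right of a hole, and the hook length of that cell is the gap between their positions. This gives $\#\{(i,j):(i-\lambda_i)+(j-\lambda^t_j)=1-\ell\}=\#\{\text{hooks of length}\ \ell\}$ for $\ell>0$. Combined with the charge-zero (particle--hole) symmetry of the diagram of $\lambda$, which is precisely what the $-k$ normalization in the definition of $m_{\lambda,\lambda^t,k}$ subtracts off, one gets that $m_{\lambda,\lambda^t,k}$ is finitely supported and equals the number of hooks of length $|k|$ for $k\neq 0$, and is $0$ for $k=0$. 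Pairing the contribution at $k=\ell$ with the one at $k=-\ell$ then produces exactly the factors $(t-l(h))(t+l(h))$. The delicate part, requiring the most care, is the two-sided bookkeeping of the Maya diagram together with the normalization, ensuring both finiteness of the support and the correct symmetric pairing of $(t-\ell)$ with $(t+\ell)$.
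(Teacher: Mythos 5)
Your proposal is correct and takes essentially the same route as the paper: evaluate categorical dimensions on the stable Weyl denominator corollary, compute $\dim\chr M(0)=\prod_{i\ge 1}(1-q^i)^{-(t^2-1)}$ via PBW and interpolated symmetric powers, use the dimension lemma with $\mu=\lambda^t$, and set $x=t^2$ — the paper states all of this in a single line, and your Maya-diagram computation of $m_{\lambda,\lambda^t,k}$ supplies exactly the hook-length combinatorics it leaves implicit. One small caveat: read literally, the paper's normalization $-k$ in $m_{\lambda,\mu,k}$ produces spurious factors for $k<0$ (already $\dim L_{[(1),(1)]}=t^2-1$ forces $m_{-1}=1$, not $2$), so your identification $m_{\lambda,\lambda^t,k}=\#\{\text{hooks of length }|k|\}$ tacitly uses the corrected reading (subtract $\max(k,0)$), which is the one the lemma needs for the product to be finite and which your charge-zero particle--hole bookkeeping indeed verifies.
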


\printbibliography

\end{document}